\def\author#1{\gdef\autrun{\def\and{\unskip, }#1}\gdef\@author{#1}}
\newtheorem{thm}{Theorem}[section]
\newtheorem{cor}[thm]{Corollary}
\newtheorem{lem}[thm]{Lemma}
\newtheorem{prop}[thm]{Proposition}
\newtheorem{mainthm}[thm]{Main Theorem}
\theoremstyle{definition}
\newtheorem{defin}[thm]{Definition}
\newtheorem{rem}[thm]{Remark}
\numberwithin{equation}{section}
\newtheoremstyle{sectstyle}
  {6pt}
  {6pt}
  {}
  {}
  {\bf }
  {}
  {5pt}
  {}
\theoremstyle{sectstyle}
\newtheorem{sect}[thm]{} 
\renewcommand{\em}{\sl}
\DeclareMathOperator*{\colim}{colim}
\renewcommand{\subsection}{\@startsection{subsection}{2}%
{\z@}{-3.25ex plus -1ex minus-.2ex}{-1em}{\bf}} \makeatother
\newcommand{\LeftEqNo}{\let\veqno\leqno}
\numberwithin{equation}{section}
\numberwithin{thm}{section}
\theoremstyle{plain}
\title{Canonical Lifts of Cycle Classes of Sections}
\author{Johannes Schmidt\thanks{Supported by DFG-Forschergruppe 1920 "Symmetrie, Geometrie und Arithmetik", Heidelberg--Darmstadt}}
\date{\vspace{-5ex}}
\begin{document}

\maketitle

\begin{quotation} 
\noindent \small {\bf Abstract.}
We present the construction of canonical lifts of $\ell$-adic cycle classes of sections of $p$-adic projective anabelian curves to the cohomology of arbitrary proper, regular, flat models.
This answers a question of Esnault and Wittenberg.
\end{quotation}


\section{Introduction}

\noindent {\bf Background.}
Let $X/k$ be a geometrically connected smooth projective curve of positive genus.
Fix a separable closure $k^{\rm s}/k$ with absolute Galois group $G_k$.
Choose a compatible base-point $\bar{x}_0$ and consider the \'{e}tale fundamental group sequence
\begin{equation*}\LeftEqNo\tag*{$\pi_1(X/k)$:}
 \xymatrix{
  \boldsymbol{1} \ar[r] &
  \pi_1(X\otimes_kk^{\rm s},\bar{x}_0) \ar[r] &
  \pi_1(X,\bar{x}_0) \ar[r] &
  G_k \ar[r] &
  \boldsymbol{1}
 }.
\end{equation*}
For a $k$-rational point $x$ let $\bar{x}$ be the geometric point over $x$ compatible with $k^{\rm s}/k$.
There is an isomorphism between $\pi_1(X,\bar{x})$ and $\pi_1(X,\bar{x}_0)$ over $G_k$, well defined up to conjugation by elements in the geometric fundamental group $\pi_1(X\otimes_kk^{\rm s},\bar{x}_0)$.
In particular, the $k$-point $x$ defines a $\pi_1(X\otimes_kk^{\rm s},\bar{x}_0)$-conjugacy class $[s_x]$ of sections of $\pi_1(X/k)$.
In his letter to Faltings, Grothendieck conjectured that the corresponding assignment $x \mapsto [s_x]$ is a bijection between the set of $k$ rational points of $X$ and the set of $\pi_1(X\otimes_kk^{\rm s},\bar{x}_0)$-conjugacy classes of sections of $\pi_1(X/k)$ for $k/\mathbb{Q}$ finitely generated and $X$ of genus $\geq 2$.
This conjecture is known under the name \emph{Section conjecture of anabelian geometry}.
The analogue conjecture over a $p$-adic field $k$ is known as \emph{$p$-adic Section conjecture}.
Both conjectures are still wide open.
In both cases, the essential question is whether the existence of a section of $\pi_1(X/k)$ implies the existence of a $k$-rational point in $X$.

\bigskip
\noindent {\bf The cycle class of a section.}
A linearized form of the above questions is whether the existence of a section implies at least the existence of an algebraic $0$-cycle of degree $1$ in $X$.
An explicit candidate for such an algebraic cycle is the cycle class ${\rm cl}_s$ of a section $s$, a class in ${\rm H}^2(X_{\rm \acute{e}t},\hat{\mathbb{Z}}(1))$ of degree $1$ defined in a similar way as the cycle class of a rational point (cf.~\cite[Chap.~6]{Stix12}).
In particular, if $x$ is a rational point of $X$, ${\rm cl}_{s_x}$ is just the cycle class ${\rm cl}_x$ of $x$.
An early appearance of this cycle class in the context of anabelian geometry was Parshin's observation in his proof of the geometric Mordell theorem, that cycle classes of sections of certain generically smooth relative curves only depend on the induced sections of certain fundamental group sequences (cf.~\cite{Parshin90} proof of Prop.~1).
Further, algebraicity results for cycle classes of certain sections were an important step in Mochizuki's celebrated proof of Grothendieck's anabelian conjectures for dominant morphisms to hyperbolic curves in \cite{Mochizuki99}.

From now on, let us focus on the $p$-adic version of the linearized form of the section conjecture.
So  fix a $p$-adic field $k$.
Again, the cycle classes of sections appear prominently:
Using model theoretic arguments, Koenigsmann showed in \cite{Koenigsmann05} that any birational section (i.e., a sections of $G_{K(X)}\rightarrow G_k$) is given by a $k$-rational point.
The algebraicity of ${\rm cl}_s$ was proven by \cite{EsnaultWittenberg09} for such birational sections $s$, thus giving a purely algebraic proof at least for the linearized version of this birational $p$-adic section conjecture.
For a general section $s$ of $\pi_1(X/k)$ promising results towards this linearized question have been achieved, too:
In \cite{Stix10}, Stix showed that there is at least an algebraic $0$-cycle of degree a power of $p$ (see op.~cit.~Thm.~15).
Further, Esnault and Wittenberg showed that ${\rm cl}_s$ is an algebraic cycle in $\ell$-adic cohomology ${\rm H}^2(X_{\rm \acute{e}t},\mathbb{Z}_\ell(1))$, at least for $\ell \neq p$ (see \cite[Cor.~3.4]{EsnaultWittenberg09}).

\bigskip
\noindent {\bf The problem of finding canonical lifts.}
Let $\mathfrak{o}$ be the ring of integers of $k$ and $\mathfrak{X}/\mathfrak{o}$ a regular proper flat model of $X$.
It follows from the $\ell$-adic algebraicity of ${\rm cl}_s$ that the $\ell$-adic cycle class admits a non-canonical lift to ${\rm H}^2(\mathfrak{X}_{\rm \acute{e}t},\mathbb{Z}_\ell(1))$.
However, it is still an open question (see \cite[Rem.\ 3.7 (iii)]{EsnaultWittenberg09}) if there is even a \emph{canonical} lift to the cohomology of $\mathfrak{X}$, i.e., a lift that can be defined \emph{independently} of arbitrary choices. 
The existence of such a canonical lift is predicted by the $p$-adic section conjecture or even the ``full'' algebraicity of ${\rm cl}_s$ (i.e., ${\rm cl}_s$ is the Chern class of a divisor in ${\rm H}^2(X_{\rm \acute{e}t},\hat{\mathbb{Z}}(1))$):
Indeed, under both assumptions ${\rm cl}_s$ is the cycle class of a \emph{unique} divisor class $[D]$ of $X$ (use \cite[Ch.~I~Lem.~3.3]{Milne04}), so the cycle class of the closure of $D$ in $\mathfrak{X}$ is a canonical lift.
The aim of this paper is to give an \emph{unconditional} construction of such a canonical lift (see Thm.~\ref{thm: canonical lift of the cycle class to the model}, below):

\begin{mainthm}\label{mainthm A}
Let $X/k$ be a geometrically connected smooth projective curve of positive genus over a $p$-adic field $k$ and $\mathfrak{X}/\mathfrak{o}$ a regular proper flat model.
Then for any prime $\ell \neq p$ and any section $s$ of $\pi_1(X/k)$, the induced $\ell$-adic cycle class ${\rm cl}_s$ admits a canonical lift ${\rm cl}_s^{\mathfrak{X}}$ to ${\rm H}^2(\mathfrak{X},\mathbb{Z}_\ell(1))$.
Further, the class ${\rm cl}_s^{\mathfrak{X}}$ is natural in the pair $(\mathfrak{X}/\mathfrak{o},s)$.
\end{mainthm}

Previously, such a canonical lift could only be given if the reduced special fibre $Y$ does not admit any rational components.
The reason is that only in this case the model $\mathfrak{X}$ is still a $K(\pi,1)$-space, i.e. its \'{e}tale cohomology of locally constant constructible coefficients is given by the cohomology of its fundamental group (cf.~Lem.~\ref{lem: homotopy type of a curve}).
Without the $K(\pi,1)$-property, it is much harder to link group theoretic construction involving the fundamental group sequence to the world of \'{e}tale cohomology in which lifts of the cycle class live.
The solution is to think about sections of $\pi_1(X/k)$ as homotopy rational (resp.~ homotopy fixed) points of $X/k$, i.e., splittings of the structural map $\widehat{\rm Et}(X) \rightarrow \widehat{\rm Et}(k) \simeq BG_{k}$ of \'{e}tale homotopy types in the homotopy category of simplicial profinite sets over the classifying space $BG_{k}$ (cf.\ \cite{Quick08}).
We then relate these to homotopy rational points of the special fibre $Y/\mathbb{F}$, giving us a grasp on the \'{e}tale cohomology of the model.
The idea to attack questions around the section conjectures using \'{e}tale homotopy theory has been around for several years already, among others e.g.~by Harpaz, P\'{a}l, Quick, Schlank, A.~Schmidt, Stix or Wickelgren (cf.~\cite[Sect.~2.6]{Stix12}).
Let us also mention A.~Schmidt and Stix's work towards anabelian geometry based on \'{e}tale homotopy theory in \cite{SchmidtStix16}.
Unfortunately, with respect to the section conjectures, outside of obstruction theory, only limited progress has been made made so far.
Further, it seems the homotopy theory of a model of a $p$-adic curve has not been included into this attempts up until now.
However, exactly the latter turned out to be very useful in the construction of the canonical lift of ${\rm cl}_s$. 

\bigskip
\noindent {\bf The construction of a canonical lift.}
The construction of ${\rm cl}_s$ can be rephrased using the pullback map $\mathbb{R}\Gamma(X_{\rm \acute{e}t},\Lambda) \rightarrow \mathbb{R}\Gamma(k_{\rm \acute{e}t},\Lambda)$ on \'{e}tale cohomology induced by the section $s$ (cf.~\ref{para: definition of the cycle class, reformulation II}).
Here it is crucial that $X$ is a $K(\pi,1)$-space.
The idea for the construction of ${\rm cl}_s^{\mathfrak{X}}$ is to specialize the section $s$ to a section $\bar{s}$ of $\pi_1(Y/\mathbb{F})$ and mimic the construction of ${\rm cl}_s$ for $\bar{s}$ (at least, after geometric pro-$\ell$ completion, such a specialization does exist -- cf.~Lem.~\ref{lem: unramified on geometric l completion}).
A priori this specialized section $\bar{s}$ may no longer induce a pullback map on \'{e}tale cohomology since $Y$ may contain rational components.
We solve this problem by showing that sections of $\pi_1(Y/\mathbb{F})$ canonically correspond to homotopy rational points of $Y/\mathbb{F}$ (cf.~Cor.~\ref{cor: unramified on geometric l completion and ho rat pts}).
From such a homotopy rational point we get the needed pullback map, allowing us the construction of a cycle class ${\rm cl}_{\bar{s}}$ in ${\rm H}^2(\mathfrak{X},\mathbb{Z}_\ell(n))$ (cf.~Def.~\ref{def: cycle class of ho rat points of special fibre}).
The desired lifting property of ${\rm cl}_{\bar{s}}$ will follow from the compatibility of these two pullback maps via the specialization morphism of \'{e}tale cohomology (cf.~Thm.~\ref{thm: specialized homotopy rational point pro l}).
This compatibility in turn will follow from a more explicit description of the specialized homotopy rational point $\bar{s}$ after an unramified base extension $k^\prime/k$ (with residue field extension $\mathbb{F}^\prime/\mathbb{F}$), using the existence of an ``algebraic'' section of the canonical map from $\widehat{\rm Et}(Y\otimes_{\mathbb{F}}\mathbb{F}^\prime)$ into the classifying space of its fundamental groupoid (cf.~Prop.~\ref{prop: specialized homotopy rational point pro l}).
\\
Although \'{e}tale homotopy theory is crucial in the construction and canonicity of the lift, at least the lifted class itself can be written down purely in terms of \'{e}tale cohomology and cohomology of profinite groups (cf.~Rem.~\ref{rem: construction of the cycle class without homotopy}).

\bigskip
\noindent {\bf Notation.}
In the following, $k$ always denotes a $p$-adic field with ring of integers $\mathfrak{o}$ and residue field $\mathbb{F}$.
If $K$ is any field, we denote a fixed separable closure by $K^{\rm s}$ and the corresponding absolute Galois group by $G_K$.
Denote by $\hat{\mathcal{S}}_{(*)}$ the category of (pointed) simplicial profinite sets together with the model structure of \cite{Quick08}.
By the $n^{\rm th}$ degree of a simplicial profinite set $\mathcal{X}$ we mean the profinite set $\mathcal{X}_n$.
If $\mathcal{X}$ is given by a pro-system $\{\mathcal{X}_i\}_{i\in I}$ of simplicial finite sets, then by the $i^{\rm th}$ level of $\mathcal{X}$ we mean the simplicial set $\mathcal{X}_i$.
For $X$ a scheme together with a geometric point $\bar{x}$, $\pi_1(X,\bar{x})$ denotes its profinite \'{e}tale fundamental group.
Mostly we will skip the base-point in our notation.
Denote by $\widehat{\rm Et}(X)$ its profinite \'{e}tale homotopy type in $\hat{\mathcal{S}}_{(*)}$.
For a simplicial profinite set $\mathcal{Y}$ with torsion local system $\Lambda$, write $C^\bullet(\mathcal{Y},\Lambda)$ for its cohomology cochains (see \cite[Sect.\ 2.2]{Quick08}).
If $\mathcal{Y}$ is the homotopy type $\widehat{\rm Et}(X)$ of a scheme $X$, then $C^\bullet(\widehat{\rm Et}(X),\Lambda)$ is quasi-isomorphic to $\mathbb{R}\Gamma(X_{\rm \acute{e}t},\Lambda)$ (see \cite[Sect.\ 3.1]{Quick08}).
If $\mathcal{Y}$ is the classifying space $BG$ of a profinite group $G$, then $C^\bullet(BG,\Lambda)$ is quasi-isomorphic to $\mathbb{R}\Gamma(G,\Lambda)$.
We will always use continuous \'{e}tale cohomology in the sense of \cite{Jannsen88}.
Finally, an index in brackets ``$A_{(i)}$'' usually means ``$A_i$ respectively $A$'', e.g.~``$f_{({\rm fet}),\ast}$'' means the finite-\'{e}tale resp.~\'{e}tale direct image $f_{{\rm fet},\ast}$ resp.~$f_{\ast}$.
There is one exception: $(-)_{(\ell)}^\wedge$ or $\pi_1^{(\ell)}$ always means the geometrically pro-$\ell$ completion (cf.~\ref{para: geometric pro l completion of a group} and \ref{para: geometric pro l completion of fundamental groups}, below).

\bigskip
\noindent {\bf Acknowledgments.}
I would like to thank H\'{e}l\`{e}ne Esnault, Armin Holschbach, Gereon Quick and Jakob Stix for helpful comments, discussions and suggestions.
I would especially like to thank the anonymous referees for the many helpful remarks and suggestions, in particular for suggesting the usage of obstruction theory in the construction of the quasi-specialized homotopy rational point.
This input allowed to delete an additional assumption in a previous version of the main theorem, excluding finitely many primes $\ell$.
This research project was partly supported by DFG-Forschergruppe 1920 "Symmetrie, Geometrie und Arithmetik", Heidelberg--Darmstadt.

\section{Preliminaries}
We will first recall and develop some techniques of \'{e}tale homotopy theory in Quick's setting of simplicial profinite sets.

\subsection*{Homotopical algebra.}
We will work in the following model categories:

\begin{sect}\label{para: homotopy categories}
Let $\hat{\mathcal{S}}$ be the category of simplicial profinite sets together with the model structures of \cite{Quick08}.
We will call objects in $\hat{\mathcal{S}}$ profinite spaces.
For a profinite group $G$, let $BG$ be its profinite classifying space and $\hat{\mathcal{S}}_{/ BG}$ the category of simplicial profinite sets over $BG$ together with the induced model structure.
A simplicial profinite $G$-set is a simplicial profinite set together with a degreewise continuous $G$-action.
Let $\hat{\mathcal{S}}_G$ be the resulting category together with the model structure of \cite{Quick10}.
Again, we will call objects in $\hat{\mathcal{S}}_G$ profinite $G$-spaces.
By \cite[Cor.\ 2.11]{Quick10}, $\hat{\mathcal{S}}_{/ BG}$ is Quillen equivalent to $\hat{\mathcal{S}}_G$ via the base change functor $\mathcal{X} \mapsto \mathcal{X} \times_{BG}EG$, where $EG \rightarrow BG$ is the universal $G$-torsor.
The corresponding Quillen-right-adjoint is given by the Borel-construction $(-) \times_G EG$.
Here, for a profinite $G$-space $\mathcal{Y}$, the Borel-construction $\mathcal{Y} \times_G EG$ is given as the product space $\mathcal{Y} \times EG$ modulo its diagonal $G$-action.
Under this equivalence, maps $BG\rightarrow \mathcal{X}$ in $\mathcal{H}(\hat{\mathcal{S}}_{/ BG})$ correspond to homotopy fixed points of $\mathcal{X} \times_{BG}EG$: 
\end{sect}

\begin{defin}\label{def: ho fixed points}
A \emph{homotopy fixed point} of a profinite $G$-space $\mathcal{X}$ is a morphism ${\rm pt} \rightarrow \mathcal{X}$ in $\mathcal{H}(\hat{\mathcal{S}}_G)$.
\end{defin}

\begin{rem}\label{rem: ho fixed points def}
Our definition of homotopy fixed points differs from the usage in \cite{Quick10} (in the sense of Sullivan):
Since $G$ acts freely on $EG$, $EG \rightarrow {\rm pt}$ is a cofibrant replacement in $\hat{\mathcal{S}}_G$.
Let $\mathcal{X} \hookrightarrow \mathcal{X}^\prime$ be a (functorial) fibrant replacement.
Then the set of homotopy fixed points \`{a} la Def.~\ref{def: ho fixed points} is given as
\begin{equation*}
 [{\rm pt},\mathcal{X}]_{\hat{\mathcal{S}}_G} = \pi_0({\rm map}_{\hat{\mathcal{S}}}(EG,\mathcal{X}^\prime)^G).
\end{equation*}
The mapping space $\mathcal{X}^{hG} = {\rm map}_{\hat{\mathcal{S}}}(EG,\mathcal{X}^\prime)^G$ is Quick's homotopy fixed point space, defined and studied in \cite{Quick10}.
In particular, a homotopy fixed point in the sense of \cite{Quick10} is a $0$-simplex of $\mathcal{X}^{hG}$, whereas a homotopy fixed point in the sense of Def.~\ref{def: ho fixed points} is the connected component of such a $0$-simplex.
\end{rem}

\begin{sect}\label{para: ho fixed points}
In general, the homotopy fixed point space $\mathcal{X}^{hG}$ is difficult to describe.
At least, by \cite[Thm.~2.16]{Quick10}, there is a Bousfield-Kan type descent spectral sequence (with differentials in the usual ``cohomological'' directions)
\begin{equation}\label{eq: descent spectral sequence}
 E_2^{p,q} = {\rm H}^p(G,\pi_{-q}(\mathcal{X})) \Rightarrow \pi_{-(p+q)}(\mathcal{X}^{hG}).
\end{equation}
\end{sect}

Applying Bousfield and Kan's connectivity lemma \cite[Chap.~IX 5.1]{BousfieldKan} to the spectral sequence in \ref{para: ho fixed points}, one can prove: 

\begin{lem}\label{lem: injection of hofixed points}
Let $G$ be a profinite group of cohomological dimension $\leq n$.
Let $f\colon \mathcal{X} \rightarrow \mathcal{Y}$ be an $(n+1)$-equivalence in $\hat{\mathcal{S}}_G$ (i.e., $\pi_q(f)$ is an isomorphism for all $q \leq n$ and an epimorphism for $q = n+1$).
Then $f$ induces an injection
\begin{equation*}
 [EG,\mathcal{X}]_{\hat{\mathcal{S}}_G} =
 \xymatrix{
  \pi_0(\mathcal{X}^{hG}) \ar@{^(->}[r] &
  \pi_0(\mathcal{Y}^{hG})
 }
 = [EG,\mathcal{Y}]_{\hat{\mathcal{S}}_G}.
\end{equation*}
\end{lem}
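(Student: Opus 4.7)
The strategy is to run the descent spectral sequence \eqref{eq: descent spectral sequence} on both $\mathcal{X}$ and $\mathcal{Y}$ and transfer the range of $\pi_q$-equivalence to an injectivity statement on the abutment $\pi_0(-^{hG})$.

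First, the cohomological dimension hypothesis forces $E_2^{p,q}=0$ for $p>n$, so the descent spectral sequence of \ref{para: ho fixed points} is concentrated in columns $0\leq p\leq n$. The total-degree-zero strip, which contributes to $\pi_0(\mathcal{X}^{hG})$, consists of the entries $E_2^{p,-p}={\rm H}^p(G,\pi_p(\mathcal{X}))$ for $0\leq p\leq n$, and the adjacent total-degree-one strip consists of the entries $E_2^{p,-p-1}={\rm H}^p(G,\pi_{p+1}(\mathcal{X}))$ for $0\leq p\leq n$. The morphism $f$ induces a morphism of descent spectral sequences compatible with the filtrations on the abutments. By the $(n+1)$-equivalence hypothesis, $f$ is an isomorphism on $\pi_p$ for $p\leq n$, hence on each entry of the total-degree-zero strip and on each entry of the total-degree-one strip in positions $p\leq n-1$; in position $p=n$ of the latter strip, $\pi_{n+1}(f)$ is surjective, and applying the right exact functor ${\rm H}^n(G,-)$ (right exactness follows from ${\rm cd}\,G\leq n$) yields a surjection.

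I would then invoke the Bousfield--Kan connectivity lemma \cite[Chap.~IX 5.1]{BousfieldKan} applied to the tower of partial totalizations computing $\mathcal{X}^{hG}$ and $\mathcal{Y}^{hG}$ in Quick's model structure. A short induction on $r$ shows that the control of the total-degree-one strip (isomorphism in low positions, surjection at $p=n$) matches the incoming differentials, so the map on $E_r^{p,-p}$ remains an isomorphism for every $r\geq 2$ and every $0\leq p\leq n$. Hence each $E_\infty^{p,-p}$-map is an isomorphism, and the finite filtration on $\pi_0(\mathcal{X}^{hG})$ (with at most $n+1$ non-trivial subquotients) assembles these pointed-set isomorphisms into the desired injection $\pi_0(\mathcal{X}^{hG})\hookrightarrow\pi_0(\mathcal{Y}^{hG})$.

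The main obstacle is handling the fringed regime of this Bousfield--Kan type spectral sequence around $\pi_0$: the abutment is merely a pointed set, and the potential extension problems in its finite filtration are not governed by short exact sequences of abelian groups. This is precisely what the connectivity lemma of \cite[Chap.~IX 5.1]{BousfieldKan} is designed to control; together with the cohomological dimension bound it keeps the induction finite and makes the passage from $E_\infty$-isomorphisms to an injection of sets unambiguous.
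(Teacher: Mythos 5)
Your proposal attempts a direct comparison of the descent spectral sequences of $\mathcal{X}$ and $\mathcal{Y}$, and the final step --- passing from ``isomorphism on the $E_\infty$-terms of the total-degree-zero line'' to ``injection of $\pi_0(\mathcal{X}^{hG})$ into $\pi_0(\mathcal{Y}^{hG})$'' --- is a genuine gap. In the fringed region of a Bousfield--Kan type spectral sequence the abutment $\pi_0$ carries only a filtration by \emph{images} $\mathrm{im}\bigl(\pi_0(\mathcal{X}^{hG})\to\pi_0(\mathrm{Tot}_s)\bigr)$ of pointed sets, and the term $E_\infty^{p,-p}$ records only the fibre \emph{over the basepoint} of $\pi_0(\mathrm{Tot}_p)\to\pi_0(\mathrm{Tot}_{p-1})$; the fibres over other elements are orbit sets of possibly different group actions and are invisible to the $E_\infty$-page. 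So a bijection on all $E_\infty^{p,-p}$ gives at best ``trivial kernel'' at the chosen basepoints, which for maps of pointed sets does not imply injectivity. Moreover, the connectivity lemma \cite[Chap.~IX 5.1]{BousfieldKan} is a \emph{vanishing} statement about a single tower (if enough $E_2$-entries are trivial, certain $\pi_i(\mathrm{Tot})$ are trivial); it is not a comparison theorem and cannot be ``applied to the tower computing $\mathcal{X}^{hG}$ and $\mathcal{Y}^{hG}$'' to transport isomorphisms across a map. Your induction through the $E_r$-pages also has to contend with the fact that in the fringe the ``differentials'' into the $\pi_0$-line are not homomorphisms, so the bookkeeping you sketch is not available in the form you use it.

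The missing idea is a change of basepoint via homotopy fibres, which is how the paper proceeds: replace $f$ by a fibration with $\mathcal{Y}$ fibrant, fix a model $s\colon EG\to\mathcal{X}$ of a homotopy fixed point, set $r=f\circ s$ and form $\mathcal{F}_r=\mathcal{X}\times_{\mathcal{Y}}EG$. By \cite[Lem.~2.9]{Quick13} the $(n+1)$-equivalence hypothesis makes $\mathcal{F}_r$ $n$-connected, and the induced fibre sequence $\mathcal{F}_r^{hG}\to\mathcal{X}^{hG}\to\mathcal{Y}^{hG}$ identifies the fibre of $\pi_0(\mathcal{X}^{hG})\to\pi_0(\mathcal{Y}^{hG})$ over $[r]$, \emph{based at} $[s]$, with $\pi_0(\mathcal{F}_r^{hG})$. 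Now the connectivity lemma is applied to the descent spectral sequence of the single space $\mathcal{F}_r$, where the entire degree-zero line vanishes on the nose ($\mathrm{H}^p(G,\pi_p(\mathcal{F}_r))=0$ for $p\le n$ by $n$-connectedness and for $p>n$ by $\mathrm{cd}(G)\le n$), giving $\pi_0(\mathcal{F}_r^{hG})=\ast$. Running this over every homotopy fixed point $s$ of $\mathcal{X}$ shows each fibre of the map on $\pi_0$ is a singleton, which is the injectivity claim. If you want to salvage your approach, you would in effect have to redo the spectral-sequence comparison at every basepoint --- which is precisely what the fibre construction packages for you.
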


\begin{proof}
We may assume that $\mathcal{Y}$ is fibrant and $f$ is a fibration in $\hat{\mathcal{S}}_G$.
Further, we may assume that $\mathcal{X}^{hG}$ is non-empty.
Say, $s\colon EG \rightarrow \mathcal{X}$ is a model of a homotopy fixed point and let $r$ be $f \circ s$.
The fibre $\mathcal{F}_r := \mathcal{X} \times_{\mathcal{Y}} EG$ comes equipped with a fibration into $EG$, hence is fibrant in $\hat{\mathcal{S}}_G$, too.
Taking limits (i.e., forgetting the topology in \cite{Quick10}) resp.\ simplicial mapping spaces ${\rm map}_{\hat{\mathcal{S}}}(EG,-)^G$ gives us a homotopy fibre sequence
\begin{equation*}
 \xymatrix{
  \lim\mathcal{F}_r \ar[r] &
  \lim\mathcal{X} \ar[r] &
  \lim\mathcal{Y}
 }
\end{equation*}
resp.\
\begin{equation*}
 \xymatrix{
  \mathcal{F}_r^{hG} \ar[r] &
  \mathcal{X}^{hG} \ar[r] &
  \mathcal{Y}^{hG}
 }
\end{equation*}
in $\underline{\rm SSets}_\ast$ (pointed by the neutral element in $G$).
By \cite[Lem. 2.9]{Quick13}, the limit of $f$ is an $n$-equivalence of simplicial sets. 
So, again by loc.~cit., the first fibre sequence implies the $n$-connectedness of $\mathcal{F}_r$.
Using the second homotopy fibre sequence, we get that the map of pointed sets $(\pi_0(\mathcal{X}^{hG}),s) \rightarrow (\pi_0(\mathcal{Y}^{hG}),r)$ has kernel $\pi_0(\mathcal{F}_r^{hG})$.
Bousfield and Kan's connectivity lemma applied to the descent spectral sequence (\ref{eq: descent spectral sequence}) for $\mathcal{F}_r$ implies that this kernel is trivial, since $\mathcal{F}_r$ is $n$-connected and $G$ has cohomological dimension $\leq n$.
Varying over all the homotopy fixed points of $\mathcal{X}$, we get the result.
\end{proof}

\begin{rem}\label{rem: obstruction theory}
Under the assumptions of Lem.\ \ref{lem: injection of hofixed points}, assume that $\mathcal{Y}^{hG}$ is non-empty.
Suppose $\mathcal{Y}$ is fibrant and $f$ a fibration in $\hat{\mathcal{S}}_G$ and let $r\colon EG \rightarrow \mathcal{X}$ be a model of a homotopy fixed point.
Again, $\mathcal{F}_r := \mathcal{X} \times_{\mathcal{Y}} EG$ is $n$-connected and in particular, ${\rm H}^{q+1}(G,\pi_q(\mathcal{F}_r))$ is trivial for all $q$.
Via a suitable obstruction theory (cf.\ \cite[6.1]{Bousfield89} or \cite[Chap.~4]{HarpazSchlank13}) it would follow that $\mathcal{F}_r^{hG}$ is non-empty, as well.
Thus, $\pi_0(\mathcal{X}^{hG}) \rightarrow \pi_0(\mathcal{Y}^{hG})$ would in fact be an isomorphism.
\end{rem}

We do not intend to develop such an obstruction theory at this place.
Instead, we will show the non-emptiness of $\mathcal{F}_r^{hG}$ in the case of $G = \hat{\mathbb{Z}}$ by hand:

\begin{lem}\label{lem: ho fixed points of simply connected spaces}
Let $\mathcal{F}$ be a simply connected space in $\hat{\mathcal{S}}_{\hat{\mathbb{Z}}}$.
Then $\mathcal{F}^{h\hat{\mathbb{Z}}}$ is non-empty.
\end{lem}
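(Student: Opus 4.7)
The plan is to construct a homotopy fixed point of $\mathcal{F}$ inductively via its Postnikov tower in the equivariant model category $\hat{\mathcal{S}}_{\hat{\mathbb{Z}}}$, exploiting the fact that $\hat{\mathbb{Z}}$ has cohomological dimension one so that all relevant obstruction groups vanish.

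First I would replace $\mathcal{F}$ by a fibrant replacement in $\hat{\mathcal{S}}_{\hat{\mathbb{Z}}}$ and consider its equivariant Postnikov tower $\mathcal{F} \simeq \holim_n \mathcal{F}_n$, where each map $\mathcal{F}_{n+1} \to \mathcal{F}_n$ is a principal fibration with fibre $K(\pi_{n+1}(\mathcal{F}), n+1)$ classified by a $k$-invariant in ${\rm H}^{n+2}(\mathcal{F}_n, \pi_{n+1}(\mathcal{F}))$. Since $\mathcal{F}$ is simply connected, the stage $\mathcal{F}_1$ is contractible and carries a (unique) homotopy fixed point, which serves as the base of the induction.

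For the inductive step, suppose a homotopy fixed point $s_n$ of $\mathcal{F}_n$ has been constructed. The obstruction to lifting $s_n$ to a homotopy fixed point of $\mathcal{F}_{n+1}$ is the pullback of the $k$-invariant along $s_n$, an element of
\begin{equation*}
 {\rm H}^{n+2}(\hat{\mathbb{Z}}, \pi_{n+1}(\mathcal{F})).
\end{equation*}
For all $n \geq 0$ this group vanishes, since $n+2 \geq 2 > 1 = {\rm cd}(\hat{\mathbb{Z}})$, so the lift exists. Assembling the compatible tower of lifts yields the desired homotopy fixed point of $\mathcal{F} \simeq \holim_n \mathcal{F}_n$.

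The point I expect to be the main obstacle is justifying the existence and naturality of the equivariant Postnikov decomposition together with the identification of the obstruction class inside ${\rm H}^{n+2}(\hat{\mathbb{Z}},\pi_{n+1}(\mathcal{F}))$ in Quick's profinite equivariant framework -- precisely the obstruction-theoretic machinery the authors defer developing in Rem.~\ref{rem: obstruction theory}. A more hands-on route, closer to the authors' ``by hand'' phrasing, would be to bypass a general Postnikov formalism and instead feed the simple connectedness of $\mathcal{F}$ directly into the descent spectral sequence \eqref{eq: descent spectral sequence}: along the anti-diagonal $p+q=0$ computing $\pi_0(\mathcal{F}^{h\hat{\mathbb{Z}}})$, the only potentially non-trivial terms are $E_2^{0,0} = \pi_0(\mathcal{F})^{\hat{\mathbb{Z}}} = \ast$ and $E_2^{1,-1} = {\rm H}^1(\hat{\mathbb{Z}}, \pi_1(\mathcal{F})) = 0$; the remaining work is then to promote this vanishing from a statement about the pointed-set structure to actual non-emptiness of $\pi_0(\mathcal{F}^{h\hat{\mathbb{Z}}})$, e.g.~by exhibiting a candidate $0$-simplex in $\mathcal{F}^{h\hat{\mathbb{Z}}}$ via the low-degree part of the tower used above.
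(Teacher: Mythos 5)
There is a genuine gap here, and you have in fact named it yourself: both routes you sketch bottom out in precisely the obstruction-theoretic machinery (equivariant profinite Postnikov towers with principal fibrations and $k$-invariants, identification of the obstruction class in ${\rm H}^{n+2}(\hat{\mathbb{Z}},\pi_{n+1}(\mathcal{F}))$, and convergence of the tower of lifts) that the paper explicitly declines to develop in Rem.~\ref{rem: obstruction theory} --- the lemma exists exactly so that this machinery can be avoided in the one case needed. Your fallback via the descent spectral sequence \eqref{eq: descent spectral sequence} does not close the circle either: in total degree $0$ the Bousfield--Kan spectral sequence is only a spectral sequence of pointed sets, and setting it up already presupposes a basepoint of $\mathcal{F}^{h\hat{\mathbb{Z}}}$; vanishing of $E_2$-terms therefore cannot by itself produce non-emptiness (this is the usual fringe phenomenon), and your proposed fix refers back to ``the low-degree part of the tower used above'', i.e.\ to the unjustified machinery. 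So as written the proposal does not contain a proof.

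The missing idea is an elementary construction special to $G=\hat{\mathbb{Z}}$, exploiting that $B\hat{\mathbb{Z}}$ is a profinite circle. Pass to a fibrant object $\mathcal{E}/B\hat{\mathbb{Z}}$ in $\hat{\mathcal{S}}_{/B\hat{\mathbb{Z}}}$ with $\mathcal{F}\simeq \mathcal{E}\times_{B\hat{\mathbb{Z}}}E\hat{\mathbb{Z}}$; a homotopy fixed point of $\mathcal{F}$ is the same as a map $B\hat{\mathbb{Z}}\rightarrow\mathcal{E}$ in $\mathcal{H}(\hat{\mathcal{S}}_{/B\hat{\mathbb{Z}}})$. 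Since $\mathcal{F}$ is simply connected, the structural map induces an isomorphism $\pi_1(\mathcal{E},e)\cong\hat{\mathbb{Z}}$, and by \cite[Prop.~2.17]{Quick08} one has $\pi_1(\mathcal{E},e)=\pi_0(\Omega(\mathcal{E},e))$, so the generator $1\in\hat{\mathbb{Z}}$ is represented by a homotopy class ${\rm S}^1=\Delta^1/\partial\Delta^1\rightarrow\mathcal{E}$. Because ${\rm S}^1\simeq B\hat{\mathbb{Z}}$ in $\hat{\mathcal{S}}$ and this map carries the generator to the generator of $\pi_1(B\hat{\mathbb{Z}})$ under the structural projection, it yields the desired map $B\hat{\mathbb{Z}}\rightarrow\mathcal{E}$ over $B\hat{\mathbb{Z}}$, i.e.\ a homotopy fixed point --- with no Postnikov decomposition and no general obstruction theory. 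If you do want to pursue the obstruction-theoretic route in earnest, you would have to supply the profinite equivariant Postnikov/moduli formalism (e.g.\ along the lines of \cite[6.1]{Bousfield89} or \cite[Chap.~4]{HarpazSchlank13}) rather than cite it as a known obstacle, and also address the convergence of the resulting tower of homotopy fixed points.
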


\begin{proof}
Let $\mathcal{E}/B\hat{\mathbb{Z}}$ be fibrant in $\hat{\mathcal{S}}_{/ B\hat{\mathbb{Z}}}$ with $\mathcal{F} \simeq \mathcal{E}\times_{B\hat{\mathbb{Z}}}E\hat{\mathbb{Z}}$.
The structural map $\mathcal{E} \rightarrow B\hat{\mathbb{Z}}$ induces an isomorphism $\pi_1(\mathcal{E},e) = \hat{\mathbb{Z}}$ for any point $e\in \mathcal{E}_0$.
By \cite[Prop.~2.17]{Quick08}, $\pi_1(\mathcal{E},e) = \pi_0(\Omega(\mathcal{E},e))$.
Let ${\rm S}^1 = \Delta^1/\partial \Delta^1 \rightarrow \mathcal{E}$ be the homotopy class corresponding to the generator $1\in \hat{\mathbb{Z}}$.
Since ${\rm S}^1 \simeq B\hat{\mathbb{Z}}$ in $\hat{\mathcal{S}}$, this induces a homotopy class $B\hat{\mathbb{Z}} \rightarrow \mathcal{E}$ in $\mathcal{H}(\hat{\mathcal{S}}_{/ B\hat{\mathbb{Z}}})$, i.e., a homotopy fixed point of $\mathcal{F}$.
\end{proof}

As in Rem.~\ref{rem: obstruction theory}, Lem.~\ref{lem: ho fixed points of simply connected spaces} implies:

\begin{cor}\label{cor: bijection of hofixed points}
Let $f\colon \mathcal{X} \rightarrow \mathcal{Y}$ be a $2$-equivalence in $\hat{\mathcal{S}}_{\hat{\mathbb{Z}}}$.
Then $f$ induces a bijection
\begin{equation*}
 [EG,\mathcal{X}]_{\hat{\mathcal{S}}_{\hat{\mathbb{Z}}}} =
 \xymatrix{
  \pi_0(\mathcal{X}^{h{\hat{\mathbb{Z}}}}) \ar[r]^-\sim &
  \pi_0(\mathcal{Y}^{h{\hat{\mathbb{Z}}}})
 }
 = [EG,\mathcal{Y}]_{\hat{\mathcal{S}}_{\hat{\mathbb{Z}}}}.
\end{equation*}
\end{cor}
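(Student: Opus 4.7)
The idea is to combine Lemma \ref{lem: injection of hofixed points} (injectivity) with Lemma \ref{lem: ho fixed points of simply connected spaces} (non-emptiness in the simply connected case), exactly along the lines hinted at in Remark \ref{rem: obstruction theory}.

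Since $\hat{\mathbb{Z}}$ has cohomological dimension $1$, Lemma \ref{lem: injection of hofixed points} applied with $n=1$ already gives that $\pi_0(f^{h\hat{\mathbb{Z}}})\colon \pi_0(\mathcal{X}^{h\hat{\mathbb{Z}}})\rightarrow \pi_0(\mathcal{Y}^{h\hat{\mathbb{Z}}})$ is injective, so only surjectivity remains. As in the proof of Lemma \ref{lem: injection of hofixed points}, we may replace $\mathcal{Y}$ by a fibrant model in $\hat{\mathcal{S}}_{\hat{\mathbb{Z}}}$ and $f$ by a fibration, so that any homotopy fixed point of $\mathcal{Y}$ is represented by an actual $\hat{\mathbb{Z}}$-equivariant map $r\colon E\hat{\mathbb{Z}}\rightarrow \mathcal{Y}$.

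Form the fibre $\mathcal{F}_r := \mathcal{X}\times_\mathcal{Y} E\hat{\mathbb{Z}}$, which is again fibrant in $\hat{\mathcal{S}}_{\hat{\mathbb{Z}}}$. Producing a lift of $r$ to a homotopy fixed point of $\mathcal{X}$ amounts to exhibiting a homotopy fixed point of $\mathcal{F}_r$. The key observation is that $\mathcal{F}_r$ is simply connected: the long exact sequence of the homotopy fibre sequence
\begin{equation*}
 \xymatrix{\mathcal{F}_r \ar[r] & \mathcal{X} \ar[r]^-{f} & \mathcal{Y}}
\end{equation*}
together with the assumption that $\pi_q(f)$ is an isomorphism for $q\leq 1$ and a surjection for $q=2$ forces $\pi_0(\mathcal{F}_r)=\pi_1(\mathcal{F}_r)=0$.

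Lemma \ref{lem: ho fixed points of simply connected spaces} then yields $\mathcal{F}_r^{h\hat{\mathbb{Z}}}\neq \emptyset$, i.e.\ the existence of a $\hat{\mathbb{Z}}$-equivariant homotopy class $E\hat{\mathbb{Z}}\rightarrow \mathcal{F}_r$. Composing with the projection $\mathcal{F}_r\rightarrow \mathcal{X}$ gives a homotopy fixed point of $\mathcal{X}$ whose image in $\mathcal{Y}$ agrees with the class of $r$, proving surjectivity. The only step requiring care is the verification that the fibre is genuinely formed in the equivariant model structure and that the long-exact-sequence computation takes place in $\hat{\mathcal{S}}_{\hat{\mathbb{Z}}}$; this is the part I expect to be the main obstacle, but it is handled by the same limit/fibrant replacement formalism already used in the proof of Lemma \ref{lem: injection of hofixed points}.
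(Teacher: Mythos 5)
Your argument is correct and is precisely the one the paper intends: injectivity via Lemma~\ref{lem: injection of hofixed points}, and surjectivity by forming the fibre $\mathcal{F}_r$ over a homotopy fixed point of $\mathcal{Y}$, observing it is simply connected because $f$ is a $2$-equivalence and ${\rm cd}(\hat{\mathbb{Z}})=1$, and then invoking Lemma~\ref{lem: ho fixed points of simply connected spaces} to produce a lift, exactly as sketched in Remark~\ref{rem: obstruction theory}. The technical point you flag at the end (that the connectivity computation for $\mathcal{F}_r$ lives in $\hat{\mathcal{S}}_{\hat{\mathbb{Z}}}$) is indeed handled by the same limit argument via \cite[Lem.~2.9]{Quick13} used in the proof of Lemma~\ref{lem: injection of hofixed points}, so nothing is missing.
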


\begin{sect}\label{para: group epis induce fibration between classifying spaces}
Let $p\colon \pi \twoheadrightarrow G$ be an epimorphism of profinite groups.
Then the induced map $B\pi \rightarrow BG$ between the classifying spaces is a fibration:
To see this, first note that the cofibrations in $\hat{\mathcal{S}}$ are precisely the degreewise monomorphisms (cf.~\cite[Thm.~2.12]{Quick08}). 
Using the adjunction between the profinite groupoid $\Pi(-)$ and $B(-)$, we have to solve the lifting problem
\begin{equation*}
 \xymatrix{
  \Delta \ar[r] \ar[d] & 
  \pi \ar@{->>}[d]
 \\
  \Gamma \ar[r] \ar@{-->}[ru] &
  G
 }
\end{equation*}
for $\Gamma$ a connected profinite groupoid and $\Delta \subseteq \Gamma$ a connected full profinite subgroupoid.
Using the usual exactness properties of pro-categories, 
this can be solved as in the discrete case
via the standard argument applying Zorn's Lemma.
\end{sect}

\begin{sect}\label{para: sect and vs ho rat points of classifying spaces}
Let $p\colon \pi \twoheadrightarrow G$ be an epimorphism of profinite groups with kernel $\pi^\prime\unlhd \pi$.
Every object is cofibrant in $\hat{\mathcal{S}}$ and $B\pi \rightarrow BG$ is fibrant by \ref{para: group epis induce fibration between classifying spaces}.
Thus, $[BG,B\pi]_{\hat{\mathcal{S}}_{/ BG}}$ is given as ${\rm Hom}_{\hat{\mathcal{S}}_{/ BG}}(BG,B\pi)$ modulo homotopy equivalences over $BG$ with respect to the standard cylinder object $BG \otimes \Delta^1$.
Such homotopies between maps $B(s_0)$ and $B(s_1)$ for sections $s_i$ of $p$ correspond precisely to conjugation of these sections via elements of $\pi^\prime$.
In particular, $B(-)$ and $\pi_1(-,\ast)$ give canonical identifications between $[BG,B\pi]_{\hat{\mathcal{S}}_{/ BG}}$ and the set of $\pi^\prime$-conjugacy classes of section of $p$.
\end{sect}

\begin{sect}\label{para: sections vs ho rat points}
Let $\mathcal{X} /BG$ be a connected simplicial profinite set in $\hat{\mathcal{S}}_{/ BG}$ and assume $\pi_1(\mathcal{X}) \rightarrow G$ is an epimorphism.
Then any map $BG \rightarrow \mathcal{X}$ in $\mathcal{H}(\hat{\mathcal{S}}_{/ BG})$ defines a $\pi_1(\mathcal{X}\times_{BG}EG)$-conjugacy class of splittings of the fundamental group sequence
\begin{equation*}
 \xymatrix{
  \boldsymbol{1} \ar[r] &
  \pi_1(\mathcal{X}\times_{BG}EG) \ar[r] &
  \pi_1(\mathcal{X}) \ar[r] &
  G \ar[r] &
  \boldsymbol{1}
 }
\end{equation*}
of $\mathcal{X} /BG$.
Conversely, if the underlying simplicial profinite set $\mathcal{X}$ of $\mathcal{X} /BG$ is a $K(\pi,1)$ (i.e., the canonical map $\mathcal{X} \rightarrow B\Pi(\mathcal{X})$ into the classifying space of the profinite fundamental groupoid is a weak equivalence), it follows from \ref{para: sect and vs ho rat points of classifying spaces} that sections of the above fundamental group sequence of $\mathcal{X} /BG$ modulo conjugation correspond to maps $BG \rightarrow \mathcal{X}$ in $\mathcal{H}(\hat{\mathcal{S}}_{/ BG})$.
\end{sect}

Combining \ref{para: sections vs ho rat points} with Lem.\ \ref{lem: injection of hofixed points} and Cor.~\ref{cor: bijection of hofixed points}, we get:

\begin{lem}\label{lem: sections vs ho rat points}
Let $G$ be a profinite group of cohomological dimension $\leq 1$ and $\mathcal{X} /BG$ a connected simplicial profinite set in $\hat{\mathcal{S}}_{/ BG}$ s.t.\ $\pi_1(\mathcal{X}) \rightarrow G$ is an epimorphism.
Assume\footnote{Using a suitable obstruction theory, we could delete this assumption (cf.~Rem.~\ref{rem: obstruction theory}).} either that the canonical map $\mathcal{X}\times_{BG}EG \rightarrow B\Pi(\mathcal{X}\times_{BG}EG)$ admits a section in $\mathcal{H}(\hat{\mathcal{S}}_G)$ or that $G=\hat{\mathbb{Z}}$.
Then we get a canonical identification between the set of $\pi_1(\mathcal{X}\times_{BG}EG)$-conjugacy classes of sections of $\pi_1(\mathcal{X}) \rightarrow G$ and $[BG,\mathcal{X}]_{\hat{\mathcal{S}}_{/ BG}} \simeq [EG,\mathcal{X}\times_{BG}EG]_{\hat{\mathcal{S}}_G}$.  
\end{lem}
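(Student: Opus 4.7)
I intend to route the desired bijection through the auxiliary space $B\Pi(\mathcal{X}\times_{BG}EG)$: as a classifying space of a profinite groupoid it is a $K(\pi,1)$, and by construction the canonical map $c\colon \mathcal{X}\times_{BG}EG \to B\Pi(\mathcal{X}\times_{BG}EG)$ is a $2$-equivalence in $\hat{\mathcal{S}}_G$ (it is an isomorphism on $\pi_0$ and $\pi_1$). The conjugacy classes of sections will be matched with homotopy classes into $B\Pi(\mathcal{X}\times_{BG}EG)$ on group-theoretic grounds, while $[BG,\mathcal{X}]_{\hat{\mathcal{S}}_{/BG}}$ will be matched with the same target via $c$.

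First, I would apply \ref{para: sect and vs ho rat points of classifying spaces} to the epimorphism $\pi_1(\mathcal{X})\twoheadrightarrow G$, whose kernel, by the fundamental group sequence in \ref{para: sections vs ho rat points}, is $\pi_1(\mathcal{X}\times_{BG}EG)$ (the fibre $\mathcal{X}\times_{BG}EG$ is connected because $\mathcal{X}$ is connected and $\pi_1(\mathcal{X})\to G$ is surjective). This identifies the $\pi_1(\mathcal{X}\times_{BG}EG)$-conjugacy classes of sections of $\pi_1(\mathcal{X})\to G$ with $[BG,B\pi_1(\mathcal{X})]_{\hat{\mathcal{S}}_{/BG}}$. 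Passing through the Quillen equivalence of \ref{para: homotopy categories} rewrites this as $[EG,B\pi_1(\mathcal{X})\times_{BG}EG]_{\hat{\mathcal{S}}_G}$, and a Morita-type comparison then identifies $B\pi_1(\mathcal{X})\times_{BG}EG$ with $B\Pi(\mathcal{X}\times_{BG}EG)$ in $\hat{\mathcal{S}}_G$.

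Second, I would exploit the $2$-equivalence $c$ on homotopy classes out of $EG$. In the case $G=\hat{\mathbb{Z}}$, Cor.~\ref{cor: bijection of hofixed points} immediately furnishes a bijection
\[ c_\ast\colon [EG,\mathcal{X}\times_{BG}EG]_{\hat{\mathcal{S}}_G} \liso [EG,B\Pi(\mathcal{X}\times_{BG}EG)]_{\hat{\mathcal{S}}_G}. \]
In the alternative case, Lem.~\ref{lem: injection of hofixed points}, applied with $n=1$ via the cohomological-dimension hypothesis on $G$, gives injectivity of $c_\ast$, and the assumed section $\sigma$ of $c$ in $\mathcal{H}(\hat{\mathcal{S}}_G)$ yields surjectivity directly: any $\phi\in[EG,B\Pi(\mathcal{X}\times_{BG}EG)]_{\hat{\mathcal{S}}_G}$ is recovered as $c_\ast(\sigma_\ast\phi)=\phi$. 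Combining Step~1 with Step~2 and the identification $[BG,\mathcal{X}]_{\hat{\mathcal{S}}_{/BG}}\simeq[EG,\mathcal{X}\times_{BG}EG]_{\hat{\mathcal{S}}_G}$ from \ref{para: homotopy categories} produces the claimed correspondence; its construction purely out of functorial homotopy-theoretic data supplies the canonicity.

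The step I expect to be the main obstacle is the Morita-type identification $B\pi_1(\mathcal{X})\times_{BG}EG \simeq B\Pi(\mathcal{X}\times_{BG}EG)$ inside $\hat{\mathcal{S}}_G$: both sides have the correct underlying profinite homotopy type $BN$ with $N=\ker(\pi_1(\mathcal{X})\to G)$, but one must verify that the comparison refines to a $G$-equivariant weak equivalence, so that the group-theoretic description of sections coming from Step~1 and the topological description provided by $c_\ast$ in Step~2 really land in the same object of $\mathcal{H}(\hat{\mathcal{S}}_G)$. Once this compatibility is pinned down, everything else is a direct assembly of the preliminary lemmas.
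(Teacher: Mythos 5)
Your proposal is correct and follows essentially the same route as the paper: the paper likewise uses the $2$-equivalence $\mathcal{X}\times_{BG}EG \to B\Pi(\mathcal{X}\times_{BG}EG)$ together with Lem.~\ref{lem: injection of hofixed points} for injectivity, the assumed section (resp.\ Cor.~\ref{cor: bijection of hofixed points} when $G=\hat{\mathbb{Z}}$) for surjectivity, and \ref{para: sect and vs ho rat points of classifying spaces}/\ref{para: sections vs ho rat points} for the group-theoretic side. The compatibility you flag as the main obstacle, namely $B\Pi(\mathcal{X})\times_{BG}EG \simeq B\Pi(\mathcal{X}\times_{BG}EG)$ in $\hat{\mathcal{S}}_G$, is left implicit in the paper's proof as well and is a routine verification.
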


\begin{proof}
Indeed, the canonical map $\mathcal{X}\times_{BG}EG \rightarrow B\Pi(\mathcal{X}\times_{BG}EG)$ is a $2$-equivalence, so it induces an injection on the respective sets of homotopy fixed points by Lem.\ \ref{lem: injection of hofixed points}.
A section of $\mathcal{X}\times_{BG}EG \rightarrow B\Pi(\mathcal{X}\times_{BG}EG)$ induces a section of the corresponding map on homotopy fixed points.
So the latter map is even bijective and the claim follows from \ref{para: sections vs ho rat points} applied to $B\Pi(\mathcal{X})$.
If $G = \hat{\mathbb{Z}}$, we just use Cor.~\ref{cor: bijection of hofixed points} instead of the section.
\end{proof}

\begin{sect}\label{para: subextension and restrictions 1}
Let $G \twoheadrightarrow \bar{G}$ be an epimorphism of profinite groups with kernel $G^\prime \unlhd G$.
Let $\mathcal{X}$ be a cofibrant and $\mathcal{Y}$ a fibrant profinite $G$-set.
Then $[\mathcal{X},\mathcal{Y}]_{\hat{\mathcal{S}}_G}$ is given as the set of connected components $\pi_0 ({\rm map}_{\hat{\mathcal{S}}}(\mathcal{X},\mathcal{Y})^G)$.
By \cite[Thm.~2.9 and Cor.~2.10]{Quick10}, ${\rm res}_{G^\prime}^G(\mathcal{X})$ is still cofibrant and ${\rm res}_{G^\prime}^G(\mathcal{Y})$ still fibrant in $\hat{\mathcal{S}}_{G^\prime}$.
In particular, the $\bar{G}$-action via conjugation on the mapping space ${\rm map}_{\hat{\mathcal{S}}}(\mathcal{X},\mathcal{Y})^{G^\prime}$ induces a canonical $\bar{G}$-action on
\begin{equation*}
 [{\rm res}_{G^\prime}^G(\mathcal{X}),{\rm res}_{G^\prime}^G(\mathcal{Y})]_{\hat{\mathcal{S}}_{G^\prime}} = \pi_0 ({\rm map}_{\hat{\mathcal{S}}}(\mathcal{X},\mathcal{Y})^{G^\prime}).
\end{equation*}
Further, the restriction ${\rm res}_{G^\prime}^G(-)\colon [\mathcal{X},\mathcal{Y}]_{\hat{\mathcal{S}}_G} \rightarrow [{\rm res}_{G^\prime}^G(\mathcal{X}),{\rm res}_{G^\prime}^G(\mathcal{Y})]_{\hat{\mathcal{S}}_{G^\prime}}$ factors through the invariants $[{\rm res}_{G^\prime}^G(\mathcal{X}),{\rm res}_{G^\prime}^G(\mathcal{Y})]_{\hat{\mathcal{S}}_{G^\prime}}^{\bar{G}}$.
Let $f\colon \mathcal{Y} \rightarrow \mathcal{Y}^\prime$ be a morphism between fibrant spaces in $\hat{\mathcal{S}}_G$.
Then ${\rm res}_{G^\prime}^G(f)$ induces a $\bar{G}$-equivariant map between homotopy classes $[{\rm res}_{G^\prime}^G(\mathcal{X}),{\rm res}_{G^\prime}^G(\mathcal{Y})]_{\hat{\mathcal{S}}_{G^\prime}} \rightarrow [{\rm res}_{G^\prime}^G(\mathcal{X}),{\rm res}_{G^\prime}^G(\mathcal{Y}^\prime)]_{\hat{\mathcal{S}}_{G^\prime}}$ (and similarly for a map $g\colon \mathcal{X}\rightarrow \mathcal{X}^\prime$ between cofibrant profinite $G$-spaces).
\\
Thus, for $\mathcal{X}$ and $f\colon \mathcal{Y}\rightarrow \mathcal{Y}^\prime$ in $\mathcal{H}(\hat{\mathcal{S}}_G)$ arbitrary, $[{\rm res}_{G^\prime}^G(\mathcal{X}),{\rm res}_{G^\prime}^G(\mathcal{Y})]_{\hat{\mathcal{S}}_{G^\prime}}$ carries a canonical $\bar{G}$-action, ${\rm res}_{G^\prime}^G(-)\colon [\mathcal{X},\mathcal{Y}]_{\hat{\mathcal{S}}_G} \rightarrow [{\rm res}_{G^\prime}^G(\mathcal{X}),{\rm res}_{G^\prime}^G(\mathcal{Y})]_{\hat{\mathcal{S}}_{G^\prime}}$ factors through the $\bar{G}$-invariants and $[{\rm res}_{G^\prime}^G(\mathcal{X}),{\rm res}_{G^\prime}^G(f)]_{\hat{\mathcal{S}}_{G^\prime}}$ is $\bar{G}$-equivariant (and similarly for a morphism $g\colon \mathcal{X}\rightarrow \mathcal{X}^\prime$ in $\mathcal{H}(\hat{\mathcal{S}}_G)$).
\end{sect}

\begin{sect}\label{para: subextension and restrictions 2}
Again, let $G \twoheadrightarrow \bar{G}$ be an epimorphism of profinite groups with kernel $G^\prime \unlhd G$.
Let $\mathcal{X} /BG$ be a connected simplicial profinite set in $\hat{\mathcal{S}}_{/ BG}$. 
The subgroup $G^\prime$ acts freely on ${\rm res}_{G^\prime}^G (\mathcal{X}\times_{BG}EG)$ and the projection ${\rm res}_{G^\prime}^G (\mathcal{X}\times_{BG}EG)\times EG^\prime \rightarrow {\rm res}_{G^\prime}^G (\mathcal{X}\times_{BG}EG)$ identifies the restrictions ${\rm res}_{\boldsymbol{1}}^{G^\prime}({\rm res}_{G^\prime}^G (\mathcal{X}\times_{BG}EG)  \times_{G^\prime} EG^\prime)$ and ${\rm res}_{\boldsymbol{1}}^{\bar{G}}(\mathcal{X}\times_{B\bar{G}}E\bar{G})$ in $\mathcal{H}(\hat{\mathcal{S}})$. 
In particular, the induced map $\pi_1({\rm res}_{G^\prime}^G (\mathcal{X}\times_{BG}EG) \times_{G^\prime} EG^\prime) \rightarrow G^\prime$ agrees with the projection map $\pi_1(\mathcal{X})\times_GG^\prime \rightarrow G^\prime$.
\end{sect}

Using \ref{para: subextension and restrictions 1} and \ref{para: subextension and restrictions 2}, we get the following $\bar{G}$-equivariant refinement of Lem.~\ref{lem: sections vs ho rat points}:

\begin{lem}\label{lem: sections vs ho rat points equivariant}
Let $G \twoheadrightarrow \bar{G}$ be an epimorphism of profinite groups with kernel $G^\prime \unlhd G$ of cohomological dimension $\leq 1$.
Let $\mathcal{X} /BG$ be a connected profinite space in $\hat{\mathcal{S}}_{/ BG}$ and $s$ a section of $\pi_1(\mathcal{X}) \rightarrow G$.
Suppose\footnote{Again, using a suitable obstruction theory, we could delete this assumption.} the canonical map ${\rm res}_{G^\prime}^G(\mathcal{X}\times_{BG}EG) \rightarrow B\Pi({\rm res}_{G^\prime}^G(\mathcal{X}\times_{BG}EG))$ admits a section in $\mathcal{H}(\hat{\mathcal{S}}_{G^\prime})$ or that $G^\prime=\hat{\mathbb{Z}}$.
Then the restricted section $s^\prime = s\vert_{G^\prime}$ of $\pi_1(\mathcal{X}) \times_GG^\prime \rightarrow G^\prime$ corresponds to a homotopy fixed point in the $\bar{G}$-invariants $[EG^\prime,{\rm res}_{G^\prime}^G(\mathcal{X}\times_{BG}EG)]_{\hat{\mathcal{S}}_{G^\prime}}^{\bar{G}}$.
\end{lem}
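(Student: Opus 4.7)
The plan is to apply Lem.~\ref{lem: sections vs ho rat points} to the sequence pulled back to $G'$ and then upgrade the resulting bijection to a $\bar{G}$-equivariant one. By \ref{para: subextension and restrictions 2}, the restriction ${\rm res}_{G'}^G(\mathcal{X}\times_{BG}EG)$, viewed in $\hat{\mathcal{S}}_{/BG'}$ via its Borel construction, has fundamental group sequence
\[ \boldsymbol{1}\to\pi_1(\mathcal{X}\times_{BG}EG)\to\pi_1(\mathcal{X})\times_G G'\to G'\to\boldsymbol{1}. \]
Since $G'$ has cohomological dimension $\leq 1$ and since our standing hypothesis gives the required section of the $K(\pi,1)$-approximation (or places us in the case $G'=\hat{\mathbb{Z}}$), Lem.~\ref{lem: sections vs ho rat points} applies and yields a canonical bijection $\Phi$ between the $\pi_1(\mathcal{X}\times_{BG}EG)$-conjugacy classes of sections of the displayed sequence and $[EG',{\rm res}_{G'}^G(\mathcal{X}\times_{BG}EG)]_{\hat{\mathcal{S}}_{G'}}$. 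It therefore suffices to show that $\Phi([s|_{G'}])$ is fixed by the $\bar{G}$-action of \ref{para: subextension and restrictions 1}.

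On the group-theoretic side, $[s|_{G'}]$ is already $\bar{G}$-invariant. The $\bar{G}$-action on conjugacy classes can be described as follows: for $\bar g\in\bar G$ lifted to $g\in G$, choose $\tilde g\in\pi_1(\mathcal{X})$ over $g$ and send a section $t$ to $h'\mapsto\tilde g\,t(g^{-1}h'g)\,\tilde g^{-1}$ (different choices of $\tilde g$ differ by an element of $\pi_1(\mathcal{X}\times_{BG}EG)$, hence yield the same class). Applying this to $t=s|_{G'}$ with the canonical lift $\tilde g:=s(g)$ gives
\[ s(g)\,s(g^{-1}h'g)\,s(g)^{-1}\;=\;s\bigl(g\cdot g^{-1}h'g\cdot g^{-1}\bigr)\;=\;s(h'), \]
so $s|_{G'}$ is fixed on the nose for this lift, and its conjugacy class is $\bar G$-fixed.

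It remains to verify that $\Phi$ is $\bar G$-equivariant for the actions just described. The construction of $\Phi$ in Lem.~\ref{lem: sections vs ho rat points} factors through \ref{para: sect and vs ho rat points of classifying spaces} (identifying conjugacy classes of sections with $[BG',B(\pi_1(\mathcal{X})\times_G G')]_{\hat{\mathcal{S}}_{/BG'}}$), the $2$-equivalence ${\rm res}_{G'}^G(\mathcal{X}\times_{BG}EG)\to B\Pi({\rm res}_{G'}^G(\mathcal{X}\times_{BG}EG))$ from Cor.~\ref{cor: bijection of hofixed points} and Lem.~\ref{lem: injection of hofixed points}, and the Quillen equivalence $\hat{\mathcal{S}}_{/BG'}\simeq\hat{\mathcal{S}}_{G'}$ of \ref{para: homotopy categories}. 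Each ingredient is natural with respect to the ambient $G$-action on $\mathcal{X}\times_{BG}EG$, which is present before restriction, so by \ref{para: subextension and restrictions 1} the induced maps between hom-sets are $\bar G$-equivariant and factor through $\bar G$-invariants. Combining with the invariance of $[s|_{G'}]$ established above yields the lemma.

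The main technical obstacle is precisely this last step. One needs to check that the fibrant/cofibrant replacements used in the proof of Lem.~\ref{lem: sections vs ho rat points}, the choice of an explicit section of $\mathcal{X}\times_{BG}EG\to B\Pi(\mathcal{X}\times_{BG}EG)$, and (when $G'=\hat{\mathbb{Z}}$) the construction of the homotopy fixed point in Lem.~\ref{lem: ho fixed points of simply connected spaces} can be performed $G$-equivariantly rather than merely $G'$-equivariantly, so that no ambiguity beyond the $\pi_1(\mathcal{X}\times_{BG}EG)$-conjugation already factored out is introduced. This is conceptually straightforward given the functoriality of Quick's model structures, but requires careful bookkeeping in the profinite setting.
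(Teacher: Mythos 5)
Your approach matches the paper's (the paper gives no written proof; it simply asserts that the lemma follows by combining \ref{para: subextension and restrictions 1}, \ref{para: subextension and restrictions 2} and Lem.~\ref{lem: sections vs ho rat points}), and your explicit verification that $[s\vert_{G'}]$ is fixed under the $\bar G$-action via the lift $\tilde g = s(g)$ is exactly the right group-theoretic input. However, the ``main technical obstacle'' you flag at the end --- proving that the inverse bijection $\Phi$ is $\bar G$-equivariant, which would require equivariant fibrant replacements and an equivariant choice of section of the $B\Pi$-approximation --- is not actually needed. You only need the \emph{forward} map
\[
 [EG',{\rm res}_{G'}^G(\mathcal{X}\times_{BG}EG)]_{\hat{\mathcal{S}}_{G'}}
 \longrightarrow
 [EG',{\rm res}_{G'}^G(B\Pi(\mathcal{X}\times_{BG}EG))]_{\hat{\mathcal{S}}_{G'}},
\]
which is $\bar G$-equivariant for free by \ref{para: subextension and restrictions 1} (the canonical map $\mathcal{X}\times_{BG}EG \to B\Pi(\mathcal{X}\times_{BG}EG)$ is already a morphism in $\hat{\mathcal{S}}_G$ before restriction, and $B\Pi$ commutes with restriction), and injective by Lem.~\ref{lem: injection of hofixed points} since it is a $2$-equivalence and ${\rm cd}(G')\leq 1$. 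An injective equivariant map reflects fixed points, and the image of the homotopy fixed point $\Phi([s\vert_{G'}])$ is the restriction of the $G$-level class $B(s)$, hence $\bar G$-invariant by \ref{para: subextension and restrictions 1}. So the homotopy fixed point itself is $\bar G$-invariant, and no equivariance of the replacements, of the chosen section $t$, or of the construction in Lem.~\ref{lem: ho fixed points of simply connected spaces} is required.
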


\subsection*{Homotopy rational points.}
We are mainly interested in profinite homotopy types of varieties over a field $K$:

\begin{sect}\label{para: profinite ho type}
Let $Y$ be a $K$-variety.
We work with a slightly modified version of Quick's profinite homotopy type (cf.~\cite[Sect.~3.1]{Quick08}):
Instead of hypercovers in the sense of \cite{Friedlander}, we work with hypercovers pointed by $K^{\rm s}$-points compatible with our choice of a separable closure $K^{\rm s}/K$.
In the mixed characteristics case over $\mathfrak{o}$, we use $k^{\rm s}$- and $\mathbb{F}^{\rm s}$-points compatible with $k^{\rm s}/k$ and $\mathbb{F}^{\rm s}/\mathbb{F}$.
It is not hard to see that this does not change the homotopy type.
The gain is that the \v{C}ech topological type of the spectrum of $K$ is the profinite classifying space $BG_K$ on the nose.
In particular, we get a canonical weak equivalence in $\hat{\mathcal{S}}$ from the profinite \'{e}tale homotopy type of the spectrum of $K$ to $BG_K$.
We define the profinite homotopy type $\widehat{\rm Et}(Y/K) \rightarrow BG_K$ of $Y/K$ as the resulting map in $\hat{\mathcal{S}}_{/ BG_K}$ induced by the structural map of $Y/K$.
Using \cite[Thm.~3.5 and Lem.~3.3]{Quick10}, we see that the underlying homotopy type of $\widehat{\rm Et}(Y/K)\times_{BG_K}EG_K$ corresponds to the homotopy type $\widehat{\rm Et}(Y\otimes_KK^{\rm s})$ in $\mathcal{H}(\hat{\mathcal{S}})_{G_K}$.
Similar arguments work for any Galois extension $K^\prime/K$ and $Y\otimes_KK^\prime$, too.
Further, $Y\otimes_KK^\prime \rightarrow Y$ induces a canonical weak equivalence $\widehat{\rm Et}(Y\otimes_KK^\prime/K^\prime)\times_{BG_{K^\prime}}EG_{K^\prime} \rightarrow {\rm res}_{G_{K^\prime}}^{G_K}(\widehat{\rm Et}(Y/K)\times_{BG_{K}}EG_{K})$ in $\hat{\mathcal{S}}_{G_{K^\prime}}$.
\end{sect}

\begin{sect}\label{para: ho rat points}
Each $K$-rational point of $Y$ defines a map $BG_K \rightarrow \widehat{\rm Et}(Y/K)$ in $\mathcal{H}(\hat{\mathcal{S}}_{/ BG_K})$, i.e., a homotopy fixed point of $\widehat{\rm Et}(Y/K) \times_{BG_K}EG_K$.
For any simplicial profinite set $\mathcal{X}/BG_K$, we therefore call any map $BG_K \rightarrow \mathcal{X}$ in $ \mathcal{H}(\hat{\mathcal{S}}_{/ BG_K})$ a {\bf homotopy rational point} of $\mathcal{X}$ over $K$.
By a homotopy rational (resp.~homotopy fixed point) of $Y/K$ we simply mean a homotopy rational point of $\widehat{\rm Et}(Y/K)$ (resp.~the induced homotopy fixed point of $\widehat{\rm Et}(Y/K) \times_{BG_K}EG_K$). 
\end{sect}

\begin{sect}\label{para: sections vs ho rat points of varieties}
Let $Y/K$ be a geometrically connected $K$-variety.
Then any homotopy rational point of $Y/K$ gives a conjugacy class of splittings of the fundamental group sequence $\pi_1(Y/K)$.
Conversely, assume $Y$ has the $K(\pi,1)$-property, i.e., its \'{e}tale cohomology of constructible locally constant coefficients is given by the cohomology of its finite \'{e}tale site.
It follows that $\widehat{\rm Et}(Y/K)$ is a $K(\pi,1)$-space in the above sense.
By \ref{para: sections vs ho rat points}, we get canonical identifications between the set of $\pi_1(Y\otimes_KK^{\rm s})$-conjugacy classes of the fundamental group sequence $\pi_1(Y/K)$, the set of homotopy rational points and the set of homotopy fixed points of $Y/K$ (see also \cite[Sect.~3.2]{Quick10}).
By \cite[Prop.\ A.4.1]{Stix02}, this in particular is the case for $Y$ any smooth curve over $K$ except for Brauer-Severi curves.
\end{sect}

Recall that a profinite group $\pi$ is $\ell$-good, if the pro-$\ell$ completion map $\pi \rightarrow \hat{\pi}_\ell$ induces isomorphisms ${\rm H}^q(\pi,\Lambda) \simeq {\rm H}^q(\hat{\pi}_\ell,\Lambda)$ for all finite $\ell$-torsion $\hat{\pi}_\ell$-modules $\Lambda$ and all these cohomology groups are finite.
For arbitrary curves we get:

\begin{lem}\label{lem: homotopy type of a curve}
Let $Y/K$ be a reduced, geometrically connected curve over field $K$.
\begin{enumerate}
 \item\label{lem: homotopy type of a curve Kpi1} $Y$ is a $K(\pi,1)$ if and only if it does not contain any rational projective components.
 \item\label{lem: homotopy type of a curve l good} If $K$ is separably closed, then $Y$ has an $\ell$-good fundamental group.
\end{enumerate}
\end{lem}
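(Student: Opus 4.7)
The plan is to reduce both parts to the case $K=K^s$ and then handle smooth curves by classical arguments, bootstrapping to arbitrary reduced curves via the normalization.

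First I note that (i) is stable under base change to $K^s$: by \ref{para: profinite ho type} and \ref{para: group epis induce fibration between classifying spaces}, the square comparing $\widehat{\mathrm{Et}}(Y)$ with $B\pi_1(Y)$ over $BG_K$ is a map of fibrations with fibers $\widehat{\mathrm{Et}}(Y\otimes_KK^s)$ and $B\pi_1(Y\otimes_KK^s)$, so the top map is a weak equivalence iff the fiber map is; the condition on rational projective components is manifestly base-change stable. Assuming $K$ separably closed, suppose $Y$ has no rational projective component. If $Y$ is affine then $\mathrm{cd}(Y_\et)\leq 1$ makes the $K(\pi,1)$-property trivial. Otherwise the normalization $\widetilde Y\to Y$ is a disjoint union of smooth projective curves of positive genus, possibly together with smooth affine pieces, and a Mayer--Vietoris sequence comparing $Y$ to the pushout of $\widetilde Y$ along the singular locus reduces $K(\pi,1)$-ness to these components. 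A smooth projective curve of genus $\geq 1$ is classically $K(\pi,1)$: since $\pi_1^\et$ is infinite, finite étale covers of arbitrary degree exist and the pullback--trace argument annihilates any class in $H^2(-,\mathbb{Z}/n(1))$, while $H^i=0$ for $i\geq 3$.

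Conversely, let $C\subset Y$ be a rational projective component. Then $\pi_1^\et(C_{K^s})=1$, so the preimage of $C$ in the pro-universal étale cover $\widetilde Y^{\mathrm{univ}}\to Y$ is a profinite disjoint union of copies of $C$. The cycle class of a smooth closed point on such a $\mathbb{P}^1$-copy is nonzero in $H^2(\widetilde Y^{\mathrm{univ}},\mathbb{Z}/\ell(1))=\varinjlim H^2_\et(Y',\mathbb{Z}/\ell(1))$ because its restriction to that $\mathbb{P}^1$-copy generates $H^2(\mathbb{P}^1,\mathbb{Z}/\ell(1))\simeq\mathbb{Z}/\ell$. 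The Cartan--Leray spectral sequence
\begin{equation*}
 E_2^{p,q}=H^p(\pi_1^\et(Y),H^q(\widetilde Y^{\mathrm{univ}},\mathbb{Z}/\ell(1)))\Rightarrow H^{p+q}_\et(Y,\mathbb{Z}/\ell(1))
\end{equation*}
then has a nontrivial contribution in row $q=2$, obstructing the isomorphism $H^*_{\mathrm{fet}}(Y,-)\simeq H^*_\et(Y,-)$ required by the $K(\pi,1)$ property.

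For (ii), still with $K=K^s$, smooth affine curves and $\mathbb{P}^1$ have free profinite fundamental group, which is trivially $\ell$-good. A smooth projective curve of genus $\geq 1$ has $\pi_1^\et$ equal to (the prime-to-$p$ completion of) the profinite completion of the topological surface group, which is $\ell$-good by Serre's classical computation (\emph{Cohomologie Galoisienne}, I.2.6~Ex.~2). For a general reduced curve, van Kampen applied to the normalization presents $\pi_1^\et(Y)$ as an iterated amalgamated free product / HNN extension of the components' fundamental groups along finite (in fact typically trivial) subgroups supported at the singular points; $\ell$-goodness being stable under such constructions transfers from the components to $Y$. The main geometric obstacle is making the Mayer--Vietoris / van Kampen step rigorous in the profinite étale setting so that both the $K(\pi,1)$-obstruction and $\ell$-goodness are cleanly localized on the irreducible components of the normalization.
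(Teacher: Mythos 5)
Your overall strategy (reduce to $K=K^{\rm s}$, normalize, localize the $K(\pi,1)$-property and $\ell$-goodness on the components of the normalization) is the same as the paper's, but the step you yourself flag as ``the main geometric obstacle'' --- making the Mayer--Vietoris / van Kampen comparison between $Y$ and $\coprod_i\tilde Y_i$ rigorous --- is precisely where the actual content of the proof lies, and it is not filled in. The paper handles it in two moves you are missing. First, it reduces to the case of ordinary multiple points by passing to the weak normalization, which is a universal homeomorphism (Andreotti--Bombieri) and hence changes neither the \'etale nor the finite-\'etale site; without this reduction a na\"ive pushout description of $Y$ along the singular locus is simply false for non-seminormal singularities (e.g.\ cusps). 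Second, with ordinary multiple points in hand, it invokes Stix's profinite Seifert--van Kampen theorem to get $\pi_1(Y)$ as the free product of the $\pi_1(\tilde Y_i)$ with a finitely generated free profinite group $F$ coming from the dual graph, and it proves a separate cohomological statement (Lem.~\ref{lem:etale vs finite etale cohomology}): in degrees $q\ge 2$ both $\mathrm{H}^q_{\rm fet}$ and $\mathrm{H}^q_{\rm et}$ of $Y$ agree with those of $\tilde Y$ --- for the finite-\'etale side because ${\rm cd}(F)\le 1$ and group cohomology of free products decomposes in positive degrees, for the \'etale side because $\pi_\ast\pi^\ast\Lambda/\Lambda$ is a skyscraper sheaf. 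This is what ``cleanly localizes'' both assertions on the $\tilde Y_i$; your proposal only names the desideratum.

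Two further points are wrong or misleading as stated. (a) In your converse argument for (i) you assert $\pi_1(C_{K^{\rm s}})=1$ for a rational projective component $C$; this fails whenever $C$ is singular (a nodal rational curve already has $\pi_1=\hat{\mathbb Z}$), so the preimage of $C$ in the pro-universal cover is in general an infinite chain of $\mathbb P^1$'s rather than a disjoint union of copies of $C$. The argument can be repaired by working with the normalization $\tilde C\cong\mathbb P^1$ and noting that the point class survives in $\varinjlim\mathrm{H}^2(Y',\mathbb Z/\ell(1))$ because $\tilde C$ lifts isomorphically to every finite \'etale cover; but as written the claim is false, and in any case the paper gets the converse for free from Lem.~\ref{lem:etale vs finite etale cohomology} together with simple connectedness of $\mathbb P^1$. (b) For (ii) you claim smooth affine curves over $K^{\rm s}$ have free profinite fundamental group; in characteristic $p$ this is not the structure theorem you want (these groups are projective, i.e.\ of cohomological dimension $\le 1$, which together with finiteness of $\mathrm{H}^1$ is all that $\ell$-goodness needs --- and is all the paper uses). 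Also, stability of $\ell$-goodness under ``amalgamated products along finite subgroups'' is not something you can appeal to in general; the point is that over a separably closed field the amalgamation at ordinary multiple points is along \emph{trivial} subgroups, so only free products (for which the Mayer--Vietoris decomposition of group cohomology in positive degrees does the job) actually occur.
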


\begin{proof}
Since $Y$ is a $K(\pi,1)$ if and only if $Y\otimes_KK^{\rm s}$ is a $K(\pi,1)$, assume $K$ is separably closed.
By topological invariance of \'{e}tale and finite \'{e}tale cohomology, we may even assume $K$ is algebraically closed.
Weak-normalization is a universal homeomorphism by \cite[Thm.~4]{AndreottiBombieri} and coincides with semi-normalization over perfect fields in the curve case, i.e., we may assume that $Y$ is semi-normal.
Since $K$ is algebraically closed, this just means $Y$ has at most ordinary multiple points as singularities (use \cite[Sect.~I 7.2.2.1]{Kollar96}).
Let us first describe the fundamental group of $Y$.
Let $\pi\colon \tilde{Y} = \coprod_i\tilde{Y}_i \rightarrow Y$ be the normalization with connected components $\tilde{Y}_i$.
Then $\pi_1(Y)$ is the free product of the fundamental groups $\pi_1(\tilde{Y}_i)$ of normalized components and a finitely generated free profinite group $F$:
E.g., by adding additional projective lines we may (for this moment alone) assume that $Y$ is even semi-stable, hence we can apply \cite[Ex.~5.5]{Stix06}.
Since $F$ and the $\pi_1(\tilde{Y}_i)$ are $\ell$-good (for the latter, cf.~\cite[Prop.~A.4.1]{Stix02}), $\pi_1(Y)$ is $\ell$-good too and claim \ref{lem: homotopy type of a curve l good} holds.
\\
In order to prove claim \ref{lem: homotopy type of a curve Kpi1}, consider the canonical map $\gamma^\ast\colon {\rm H}^q(Y_{\rm fet},\Lambda) \rightarrow {\rm H}^q(Y,\Lambda)$ between finite-\'{e}tale and \'{e}tale cohomology with locally constant constructible coefficients $\Lambda$.
Using the following Lem.~\ref{lem:etale vs finite etale cohomology}, we see that $Y$ is a $K(\pi,1)$ if and only if all $\tilde{Y}_i$ are $K(\pi,1)$ and claim \ref{lem: homotopy type of a curve Kpi1} follows, again using \cite[Prop.~A.4.1]{Stix02}.
\end{proof}

\begin{lem}\label{lem:etale vs finite etale cohomology}
Let $Y/K$ be a reduced, geometrically connected curve over a field $K$.
Let $\pi\colon \tilde{Y} = \coprod_i\tilde{Y}_i \rightarrow Y$ be the normalization, $\tilde{Y}_i$ the irreducible components and $\mathcal{R}$ the set of indices $i$ with $\tilde{Y}_i$ a rational projective curve.
Let $f\colon Y \rightarrow {\rm Spec}(K)$ and $\tilde{f}_{(i)}\colon \tilde{Y}_{(i)} \rightarrow {\rm Spec}(K)$ be the structural morphisms and $\Lambda$ a locally constant constructible sheaf on $Y_{\rm et}$.
\begin{enumerate}
 \item\label{lem:etale vs finite etale cohomology fet} For $q\geq 2$, the canonical map $\mathbb{R}^q f_{{\rm fet},\ast}\Lambda \rightarrow \mathbb{R}^q \tilde{f}_{{\rm fet},\ast}\pi^\ast\Lambda$ is an isomorphism on finite-\'{e}tale cohomology groups in $\underline{\rm Mod}_{G_K}$.
 \item\label{lem:etale vs finite etale cohomology et} For $q\geq 2$, the canonical map $\mathbb{R}^q f_{\ast}\Lambda \rightarrow \mathbb{R}^q \tilde{f}_{\ast}\pi^\ast\Lambda$ is an isomorphism on \'{e}tale cohomology groups in $\underline{\rm Mod}_{G_K}$.
 \item\label{lem:etale vs finite etale cohomology triangle} The canonical morphism of sites $\gamma\colon Y_{\rm et} \rightarrow Y_{\rm fet}$ induces the following exact triangle in $\mathcal{D}^+(\underline{\rm Mod}_{G_K})$:
       \begin{equation*}
        \xymatrix{
         \mathbb{R} f_{{\rm fet},\ast}\Lambda \ar[r] &
         \mathbb{R} f_{\ast}\Lambda \ar[r] &
         \bigoplus_{i\in \mathcal{R}}  (\mathbb{R}^2 \tilde{f}_{i,\ast}\pi^\ast\Lambda)[-2] \ar[r] &
         \mathbb{R} f_{{\rm fet},\ast}\Lambda[1]
        }.
       \end{equation*}
\end{enumerate}
\end{lem}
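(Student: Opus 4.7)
The plan is to first reduce, exactly as at the start of the proof of Lem.~\ref{lem: homotopy type of a curve}, to the case where $K$ is algebraically closed and $Y$ is semi-normal, so that its only singularities are ordinary multiple points. All three claims concern sheaves on $\Spec K$ whose stalks at a geometric point compute the corresponding cohomology of $Y\otimes_KK^{\rm s}$, so the reduction is harmless and the remaining $G_K$-equivariance will be automatic from the functoriality of the constructions below.

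For (ii) I would use the normalization sequence of étale sheaves
$$0\To\Lambda\To\pi_\ast\pi^\ast\Lambda\To\mathcal{Q}\To 0$$
on $Y$, where $\mathcal{Q}$ is supported on the finite singular locus (the failure of the adjunction unit at an ordinary multiple point is a skyscraper). Since $\pi$ is finite, $\mathbb{R}\pi_\ast=\pi_\ast$, and $\mathcal{Q}$ has no higher étale cohomology, so the long exact cohomology sequence immediately produces an isomorphism $\mathbb{R}^qf_\ast\Lambda\iso\mathbb{R}^q\tilde f_\ast\pi^\ast\Lambda$ for $q\geq 2$.

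For (i) I would exploit the description of $\pi_1(Y)$ recorded in the proof of Lem.~\ref{lem: homotopy type of a curve} as a profinite free product $\pi_1(\tilde Y_1)\hat\ast\cdots\hat\ast\pi_1(\tilde Y_n)\hat\ast F$, with $F$ a finitely generated free profinite factor (this is the argument via semi-stabilization and \cite[Ex.~5.5]{Stix06}). The cohomology of a profinite free product decomposes in degrees $\geq 2$ as the direct sum of the cohomologies of the factors (the Mayer--Vietoris argument via the associated Bass--Serre tree goes through in the profinite setting), and the free factor $F$ drops out in degrees $\geq 2$ because it has cohomological dimension $\leq 1$. Identifying finite-étale cohomology of a connected scheme with the cohomology of its fundamental group then yields the isomorphism of (i). I expect this to be the main technical step, since it requires either citing or verifying the profinite analogue of the Mayer--Vietoris decomposition for a free product.

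Finally, for (iii) the canonical map of sites $\gamma$ produces a canonical morphism $\mathbb{R} f_{{\rm fet},\ast}\Lambda\to\mathbb{R} f_\ast\Lambda$; let $C$ be its cone. I would compute the cohomology sheaves of $C$ and show that they vanish outside degree $2$. In degrees $0$ and $1$, both sides of the map agree: $\mathrm{H}^0_{\rm fet}$ and $\mathrm{H}^0$ both give the global sections of the locally constant $\Lambda$, and both $\mathrm{H}^1$'s classify $\Lambda$-torsors on the connected scheme $Y$. For $q\geq 2$, parts (i) and (ii) combined with $\pi_1(\mathbb{P}^1)=\boldsymbol{1}$ identify $\mathbb{R}^qf_{{\rm fet},\ast}\Lambda$ with the summand of $\mathbb{R}^qf_\ast\Lambda=\bigoplus_i\mathbb{R}^q\tilde f_{i,\ast}\pi^\ast\Lambda$ indexed by $i\notin\mathcal{R}$. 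Since $\mathbb{R}^q\tilde f_{i,\ast}\pi^\ast\Lambda=0$ for $q\neq 0,2$ whenever $\tilde Y_i\cong\mathbb{P}^1$, the cone is concentrated in degree $2$ with $\mathcal{H}^2(C)=\bigoplus_{i\in\mathcal{R}}\mathbb{R}^2\tilde f_{i,\ast}\pi^\ast\Lambda$, and $C$ is therefore canonically quasi-isomorphic to $\mathcal{H}^2(C)[-2]$, which is the claimed triangle.
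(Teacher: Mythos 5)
Your proposal is correct and follows essentially the same route as the paper: reduce to $K$ algebraically closed and $Y$ semi-normal, prove (ii) via the skyscraper cokernel of $\Lambda\to\pi_\ast\pi^\ast\Lambda$ with $\pi$ finite, prove (i) via the profinite free product decomposition of $\pi_1(Y)$ (with the free factor dying in degrees $\geq 2$), and obtain (iii) by computing the cone of $\gamma^\ast$ using $\pi_1(\mathbb{P}^1_K)=\boldsymbol{1}$ and ${\rm cd}(\mathbb{P}^1_K)=2$. The only difference is that you make explicit two points the paper leaves implicit (the normalization exact sequence and the profinite Mayer--Vietoris decomposition for free products), which is harmless.
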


\begin{proof}
As in the proof of Lem.~\ref{lem: homotopy type of a curve}, we may assume that $K$ is algebraically closed and $Y$ is semi-normal.
Again, $\pi_1(Y)$ is the free product of the fundamental groups $\pi_1(\tilde{Y}_i)$ of normalized components and a finitely generated free profinite group $F$.
For claims \ref{lem:etale vs finite etale cohomology fet} and \ref{lem:etale vs finite etale cohomology et}, we have to show that $\pi$ induces isomorphisms ${\rm H}^q(Y_{\rm fet},\Lambda)\rightarrow {\rm H}^q(\tilde{Y}_{\rm fet},\pi^\ast\Lambda)$ and ${\rm H}^q(Y,\Lambda)\rightarrow {\rm H}^q(\tilde{Y},\pi^\ast\Lambda)$ on finite-\'{e}tale and \'{e}tale cohomology groups of degree $q\geq 2$.
For the former, note that $F$ has cohomological dimension $\leq 1$ and group-cohomology in degrees $\geq 1$ of free products is the direct sum of the cohomology of the factors.
For the latter, use that $\pi$ is finite, so $\pi_\ast\pi^\ast\Lambda/\Lambda$ is a skyscraper sheaf.
\\
It remains to show claim \ref{lem:etale vs finite etale cohomology triangle}:
Let $C^\bullet$ be the cone of $\gamma^\ast\colon \mathbb{R} f_{{\rm fet},\ast} \rightarrow \mathbb{R} f_{\ast}\Lambda$.
In degrees $q = 0,1$, $\gamma$ induces isomorphisms on cohomology.
Further, by claims \ref{lem:etale vs finite etale cohomology fet} and \ref{lem:etale vs finite etale cohomology et},
\begin{equation*}
 \mathbb{R}^q f_{\ast}\Lambda \simeq \mathbb{R}^q f_{{\rm fet},\ast}\Lambda \oplus \bigoplus_{i\in \mathcal{R}} \mathbb{R}^q \tilde{f}_{i,\ast}\pi^\ast\Lambda
\end{equation*}
holds for $q \geq 2$ and $\gamma^\ast$ corresponds to the embedding $\mathbb{R}^q f_{{\rm fet},\ast}\Lambda \hookrightarrow \mathbb{R}^q f_{\ast}\Lambda$:
Indeed, for $i\in \mathcal{R}$ the component $\tilde{Y}_i \simeq \mathbb{P}_K^1$ is simply connected, while $\tilde{Y}_j$ are $K(\pi,1)$ for the remaining $j \notin \mathcal{R}$.
Using the long exact sequence on cohomology and ${\rm cd}(\mathbb{P}_K^1) = 2$, we see that $C^\bullet$ has cohomology only in degree $2$ and ${\rm H}^2C^\bullet \cong \bigoplus_{i\in \mathcal{R}}  \mathbb{R}^2 \tilde{f}_{i,\ast}\pi^\ast\Lambda$.
\end{proof}

\begin{rem}\label{rem: homotopy type of a curve}
A more precise description of the homotopy type of a curve is given in \cite[Thm.~2.4]{JSchmidt16}.
In particular, under suitable rationality-conditions on the singularities of $Y$ and if $K$ has cohomological dimension $\leq 1$, its homotopy type can be described as its $K(\pi,1)$-part glued to the homotopy types of its rational projective components, mirroring the exact triangle in Lem.~\ref{lem:etale vs finite etale cohomology}.
\end{rem}

\subsection*{Geometric pro-\boldmath{$\ell$} completions.}
Throughout the following subsection, let $G$ be a strongly complete profinite group, i.e., every subgroup of finite index is open.  
Let us shortly discuss pro-$\ell$-completion in $\mathcal{H}({\hat{\mathcal{S}}}_G)$.

\begin{sect}\label{para: geometric pro l completion}
In \cite{Quick12}, Quick gave an explicit construction of a pro-finite completion in $\hat{\mathcal{S}}_G$.
An analogous construction gives a pro-$\ell$-completion in $\hat{\mathcal{S}}_G$, too (see op.~cit.~Rem.~3.3).
Let us shortly describe this construction:
By op.~cit.~4.3, any profinite $G$-space $\mathcal{X}$ is isomorphic to a profinite $G$-space given by a pro-system $\{ \mathcal{X}_i \}_{i\in I}$ of degreewise finite discrete $G$-spaces $\mathcal{X}_i$.
Then the pro-$\ell$-completion is given as the profinite $G$-space
\begin{equation*}
 \mathcal{X}_\ell^\wedge := \{ \bar{W} \hat{\Gamma}_\ell(\mathcal{X}_i) \}_{i\in I},
\end{equation*}
where $\bar{W}(-)$ is (levelwise) the classifying space and $\hat{\Gamma}_\ell(\mathcal{X}_i)$ is degreewise the pro-$\ell$ completion of the free loop group $\Gamma(\mathcal{X}_i)$ of $\mathcal{X}_i$.
Arguing directly using the (levelwise) homotopy fibre sequence
\begin{equation*}
 \xymatrix{
  \hat{\Gamma}_\ell(\mathcal{X}_i) \ar[r] &
  W \hat{\Gamma}_\ell(\mathcal{X}_i) \ar[r] &
  \bar{W}\hat{\Gamma}_\ell(\mathcal{X}_i)
 },
\end{equation*}
or comparing $\mathcal{X}_\ell^\wedge$ with the fibrant replacement in Morel's pro-$\ell$ model structure (see \cite[Sect.~2.1]{Morel96}), we get that $\pi_1(\mathcal{X}_\ell^\wedge)$ equals the pro-$\ell$ completion $\pi_1^\ell(\mathcal{X})$, $\mathcal{X}_\ell^\wedge$ has pro-$\ell$ homotopy groups and the canonical map $\mathcal{X} \rightarrow \mathcal{X}_\ell^\wedge$ induces an isomorphism in $\mathcal{D}^+(\underline{\rm Mod}_G)$ on cohomology cochains $C^\bullet(-,\Lambda)$ for any finite $\ell$-torsion $G$-module $\Lambda$.
\\
For $\mathcal{X}/BG$ in $\hat{\mathcal{S}}_{/ BG}$, denote by $\mathcal{X}_{(\ell)}^\wedge / BG$ the homotopy type in $\mathcal{H}(\hat{\mathcal{S}}_{/ BG})$ corresponding to the pro-$\ell$ completion $(\mathcal{X} \times_{BG}EG)_\ell^\wedge$ in $\mathcal{H}(\hat{\mathcal{S}}_G)$.
The canonical map $\mathcal{X} \rightarrow \mathcal{X}_{(\ell)}^\wedge$ induces isomorphism in $\mathcal{D}^+(\underline{\rm Ab})$ on cohomology cochains $C^\bullet(-,\Lambda)$ for any finite $\ell$-torsion $G$-module $\Lambda$:
Indeed, $C^\bullet(\mathcal{X}\times_{BG}EG,\Lambda) \rightarrow C^\bullet((\mathcal{X}\times_{BG}EG)_{\ell}^\wedge,\Lambda)$ induces an isomorphism between the respective Hochschild-Serre spectral sequences. 
\end{sect}

If $G$ itself is not a pro-$\ell$ group, $\mathcal{X}_{(\ell)}^\wedge / BG$ corresponds to a ``geometric'' pro-$\ell$ completion in the relative homotopy category $\mathcal{H}(\hat{\mathcal{S}}_{/ BG})$.
Let us discuss the case of $B\pi \rightarrow BG$ for suitable quotients $\pi\twoheadrightarrow G$:

\begin{sect}\label{para: geometric pro l completion of a group}
Let $p\colon \pi\twoheadrightarrow G$ be an epimorphism of profinite groups with kernel $\pi^\prime \unlhd \pi$.
Assume $\pi^\prime$ is an $\ell$-good profinite group.
Let $\Delta_\ell \unlhd_c \pi^\prime$ be the kernel of the pro-$\ell$ completion $\pi^\prime \rightarrow \hat{\pi}_\ell^\prime$.
Note that it is a characteristic subgroup by the universal property of the completion.
Then we define the geometric pro-$\ell$ completion of $\pi\rightarrow G$ as $\hat{\pi}_{(\ell)}:= \pi / \Delta_\ell \rightarrow G$.
By construction, the geometric pro-$\ell$ completion $\pi \twoheadrightarrow \hat{\pi}_{(\ell)}$ sits in the following commutative diagram with exact rows
\begin{equation*}
 \xymatrix{
  \phantom{.}\boldsymbol{1} \ar[r] &
  \pi^\prime \ar[r] \ar@{->>}[d] &
  \pi \ar[r] \ar@{->>}[d] &
  G \ar[r] \ar@{=}[d] &
  \boldsymbol{1}\phantom{.}
 \\
  \phantom{.}\boldsymbol{1} \ar[r] &
  \hat{\pi}_\ell^\prime \ar[r] &
  \hat{\pi}_{(\ell)} \ar[r] &
  G \ar[r] &
  \boldsymbol{1}.
 }
\end{equation*}
It follows that $B\hat{\pi}_{(\ell)}\times_{BG}EG \rightarrow (B\hat{\pi}_{(\ell)}\times_{BG}EG)_\ell^\wedge$ is a weak equivalence in $\hat{\mathcal{S}}_G$ (this is a special case of the pro-$\ell$ analogue of \cite[Thm.\ 3.14]{Quick12}).
Further, the canonical Map $\pi \twoheadrightarrow \hat{\pi}_{(\ell)}$ induces an isomorphism $(B\pi\times_{BG}EG)_\ell^\wedge \rightarrow B\hat{\pi}_{(\ell)}\times_{BG}EG$.
In particular, $(B\pi)_{(\ell)}^\wedge = B (\hat{\pi}_{(\ell)})$.
\end{sect}

For fundamental groups of geometrically connected $K$-varieties we define:

\begin{sect}\label{para: geometric pro l completion of fundamental groups}
We write $\pi_1^\ell(Y)$ (resp.\ $\pi_1^{(\ell)}(Y)$) for the pro-$\ell$ completion (resp.\ geometric pro-$\ell$ completion) in the sense of \ref{para: geometric pro l completion of a group} of the \'{e}tale fundamental group $\pi_1(Y)$ of a geometrically connected $K$-variety $Y$.
Denote by $\pi_1^{(\ell)}(Y/\mathbb{F})$ the geometrically pro-$\ell$ completed fundamental group sequence sequence
\begin{equation*}
 \xymatrix{
  \phantom{.}\boldsymbol{1} \ar[r] &
  \pi_1^\ell(Y\otimes_{K}K^{\rm s}) \ar[r] &
  \pi_1^{(\ell)}(Y) \ar[r] &
  G_{K} \ar[r] &
  \boldsymbol{1}.
 }
\end{equation*}
Say $G_K$ is strongly complete (e.g., $K$ a finite or $p$-adic field).
Then $\pi_1^{(\ell)}(Y)$ is the fundamental group of the geometric pro-$\ell$ completion $\widehat{\rm Et}(Y/K)_{(\ell)}^\wedge$.
\end{sect}

\section{The cycle class of a homotopy fixed point}\label{sect: cycle class of a section}
In this section, we recall and reformulate the definition of the cycle class of a section $s$ of a smooth projective curve $Y/K$ of positive genus.
We extent this definition to homotopy rational and homotopy fixed points of special fibres of regular proper flat models of ($p$-adic) curves. 

\subsection*{Smooth curves.}
Let $Y/K$ be a smooth projective curve over a field $K$ of genus $g\geq 1$ and $s\colon G_K \rightarrow \pi_1(Y)$ a section of the fundamental group sequence $\pi_1(Y/K)$.
Fix an integer $n$ prime to the characteristic of $K$.
Let us first recall and reformulate the definition of the cycle class of $s$:

\begin{sect}\label{para: definition of the cycle class}
In \cite[Thm.~26]{EsnaultWittenberg09}, the cycle class ${\rm cl}_s$ of the section $s$ in ${\rm H}^2(Y,\boldsymbol{\mu}_n)$ is defined as follows: 
${\rm cl}_s$ is the unique class of ${\rm H}^2(Y,\boldsymbol{\mu}_n)$ restricting to the first Chern-class $\hat{c}_1[\Gamma_p]$ in $H^2(Y\times_KY^\prime,\boldsymbol{\mu}_n)$ along the projection $Y\times_KY^\prime \rightarrow Y$.
Here, $\Gamma_p \colon Y^\prime \rightarrow Y\times_KY^\prime$ is the graph of a fine enough neighbourhood $p\colon (Y^\prime,s^\prime) \rightarrow (Y,s)$ of $s$, i.e., a finite \'{e}tale covering $p$ together with a section $s^\prime$ of $\pi_1(Y^\prime/K)$, compatible with the original section $s$.
Note that $\hat{c}_1[\Gamma_p]$ is just the restriction of the Chern-class $\hat{c}_1[\Delta_Y]$ of the diagonal $\Delta_Y \colon Y \hookrightarrow Y\times_KY$ along $Y\times_KY^\prime \rightarrow Y\times_KY$.
\end{sect}

We want to reformulate the definition of ${\rm cl}_s$ in terms of the universal neighbourhood
\begin{equation*}
 \tilde{p}\colon
 \xymatrix{
  (\tilde{Y}_s,\tilde{s}) \ar[r] &
  (Y,s)
 }
\end{equation*}
of $s$, the pro-\'{e}tale covering given by the closed subgroup $s(G_K)\leq \pi_1(Y)$ and the canonical section $\tilde{s}\colon G_K \cong s(G_K) = \pi_1(\tilde{Y}_s)$.
Since $Y$ is a $K(\pi,1)$ space, the \'{e}tale cohomology of $\tilde{Y}_s$ with locally constant coefficients is given as Galois-cohomology of $G_K$.

\begin{sect}\label{para: definition of the cycle class, reformulation I}
Let $f$ (resp.\ $f_{Y^\prime}$) be the structural map $Y\rightarrow S$ (resp.\ $Y^\prime \rightarrow S$) for $S$ the spectrum of $K$.
By the projection formula, $\mathbb{R}(f\times_S f_{Y^\prime})_\ast \boldsymbol{\mu}_n \simeq \mathbb{R}f_\ast \boldsymbol{\mu}_n \otimes^{\mathbb{L}} \mathbb{R}f_{Y^\prime,\ast} \mathbb{Z}/n$.
Taking the colimit over all neighbourhoods $(Y^\prime,s^\prime)$ we get a canonical map
\begin{equation*}
 \tilde{p}^\ast\colon
 \xymatrix{
 \mathbb{R}f_\ast\mathbb{Z}/n \ar[r] &
 \mathbb{R}f_{\tilde{Y}_s,\ast}\mathbb{Z}/n 
 }
 \simeq \mathbb{Z}/n.
\end{equation*}
Tensoring with the identity of $\mathbb{R}f_\ast\boldsymbol{\mu}_n$ and taking Galois-invariants $\mathbb{H}^2(G_K,-)$ we get the pullback map
\begin{equation*}
 {\rm id}\otimes^{\mathbb{L}} \tilde{p}^\ast\colon
 \xymatrix{
  {\rm H}^2(Y\times_KY,\boldsymbol{\mu}_n)\ar[r] &
  {\rm H}^2(Y,\boldsymbol{\mu}_n)
 },
\end{equation*}
which agrees with the colimit of the restriction map ${\rm H}^2(Y\times_KY,\boldsymbol{\mu}_n) \rightarrow {\rm H}^2(Y\times_KY^\prime,\boldsymbol{\mu}_n)$ over all neighbourhoods $(Y^\prime,s^\prime)$ of $s$ by construction.
In particular, we get
\begin{equation*}
 {\rm cl}_s = ({\rm id}\otimes^{\mathbb{L}} \tilde{p}^\ast)(\hat{c}_1[\Delta_Y]).
\end{equation*}
\end{sect}

Since $Y$ is $K(\pi,1)$, the section $s$ induces a homotopy rational point of $Y/K$.
In $\mathcal{H}(\hat{\mathcal{S}}_{G_K})$, $s$ corresponds to a homotopy fixed point $EG_K \rightarrow \widehat{\rm Et}(Y/K)\times_{BG_K}EG_K$.
By abuse of notation, we will denote both the corresponding homotopy rational point and homotopy fixed point by $s$, too.
A key observation is that $\tilde{p}\colon \tilde{Y}_s \rightarrow Y$ is an algebraic model of the homotopy rational point induced by $s$: via $\tilde{s}$, the morphism induced by $\tilde{p}$ on profinite homotopy types is isomorphic to the homotopy rational point given by $s$.  
Using this observation, we want to reformulate the definition of ${\rm cl}_s$ in terms of the induced homotopy rational point or homotopy fixed point of $Y/K$:

\begin{sect}\label{para: definition of the cycle class, reformulation II}
Let $\mathbb{Z}/n \rightarrow \mathcal{I}^\bullet$ be an injective resolution in ${\rm Sh}(Y_{\rm \acute{e}t},\mathbb{Z}/n)$.
Then $\mathbb{R}f_{Y^\prime,\ast}\mathbb{Z}/n$ corresponds to the complex of $G_K$-modules $\Gamma(Y^\prime \otimes_KK^{\rm s},\mathcal{I}^\bullet)$.
By Verdier's hypercover theorem, the latter complex is quasi-isomorphic to the colimit of total complexes of the double complexes $\Gamma(\mathfrak{V}_{\bullet_{\rm I}},\mathcal{I}^{\bullet_{\rm II}})$ over \'{e}tale hypercovers $\mathfrak{V}. \rightarrow Y^\prime\otimes_KK^{\rm s}$ (here $_{\bullet_{\rm I}}$ and $^{\bullet_{\rm II}}$ indicate the directions of the differentials in the double complex). 
Since $\mathbb{R}f_{Y^\prime,\ast}\mathbb{Z}/n$ has cohomology only in degrees in the interval $[0,2]$, we may restrict to truncated hypercovers.
In particular, we may assume that each relevant hypercover has only finitely many Galois-conjugates.
The fibre product of all the Galois-conjugates of one such hypercover descends, i.e., we get
\begin{equation*}
 \mathbb{R}f_{Y^\prime,\ast}\mathbb{Z}/n \simeq
 \colim_{\mathfrak{U}.\rightarrow Y^\prime} {\rm tot}^\bullet(\Gamma(\mathfrak{U}_{\bullet_{\rm I}}\otimes_KK^{\rm s},\mathcal{I}^{\bullet_{\rm II}})) \simeq
 \colim_{\mathfrak{U}.\rightarrow Y^\prime} \Gamma(\mathfrak{U}_\bullet\otimes_KK^{\rm s},\mathbb{Z}/n),
\end{equation*}
in $\mathcal{D}^+(\underline{\rm Mod}_{G_K})$, where $\mathfrak{U}.\rightarrow Y^\prime$ runs through the \'{e}tale hypercovers of $Y^\prime$.
The latter complex is quasi-isomorphic to the cohomology cochains $C^\bullet(\widehat{\rm Et}(Y^\prime/K)\times_{BG_K}EG_K,\mathbb{Z}/n)$ in $\mathcal{D}^+(\underline{\rm Mod}_{G_K})$.
Summing up, we get that $\tilde{p}^\ast\colon \mathbb{R}f_\ast\mathbb{Z}/n \rightarrow \mathbb{R}f_{\tilde{Y}_s,\ast}\mathbb{Z}/n \simeq \mathbb{Z}/n$ is isomorphic to
\begin{equation*}
 s^\ast\colon
 \xymatrix{
  C^\bullet(\widehat{\rm Et}(Y/K)\times_{BG_K}EG_K,\mathbb{Z}/n) \ar[r] &
  C^\bullet(EG_K,\mathbb{Z}/n)
 }
 \simeq \mathbb{Z}/n.
\end{equation*}
Tensoring with the identity of $C^\bullet(\widehat{\rm Et}(Y/K)\times_{BG_K}EG_K,\boldsymbol{\mu}_n)$ and taking $\mathbb{H}^2(G_K,-)$ we get ${\rm id}\otimes^{\mathbb{L}} \tilde{p}^\ast$ as the pullback map
\begin{equation*}
 {\rm id}\otimes^{\mathbb{L}} s^\ast\colon
 \xymatrix{
  {\rm H}^2(Y\times_KY,\boldsymbol{\mu}_n)\ar[r] &
  {\rm H}^2(Y,\boldsymbol{\mu}_n)
 },
\end{equation*}
i.e., we get ${\rm cl}_s = ({\rm id}\otimes^{\mathbb{L}} s^\ast)(\hat{c}_1[\Delta_Y])$.
\end{sect}

\begin{sect}\label{para: cycle class of a ho rat point}
Using the characterization of ${\rm cl}_s$ in \ref{para: definition of the cycle class, reformulation II} as a definition, we get a cycle class ${\rm cl}_s$ of any homotopy rational point $s\colon BG_K \rightarrow \widehat{\rm Et}(Y/K)$ or homotopy fixed point of an arbitrary smooth curve $Y/K$:
We set ${\rm cl}_s = ({\rm id}\otimes^{\mathbb{L}} s^\ast)(\hat{c}_1[\Delta_Y])$ for ${\rm id}\otimes^{\mathbb{L}} s^\ast$ the map induced on $\mathbb{H}^2(G_K,-)$ by the derived tensor product in $\mathcal{D}^+(\underline{\rm Mod}_{G_K})$ of 
\begin{equation*}
 s^\ast\colon
 \xymatrix{
  C^\bullet(\widehat{\rm Et}(Y/K)\times_{BG_K}EG_K,\mathbb{Z}/n) \ar[r] &
  \mathbb{Z}/n
 }
\end{equation*}
and the identity of $C^\bullet(\widehat{\rm Et}(Y/K)\times_{BG_K}EG_K,\boldsymbol{\mu}_n)$.
If we work with the cycle class of the diagonal of $Z$ in ${\rm H}^{2d}(Z\times_KZ,\mathbb{Z}/n(d))$, we get a cycle class for homotopy rational points of $d$-dimensional smooth $K$-varieties $Z/K$, too. 
\end{sect}

\begin{sect}\label{para: cycle class of a ho rat point up to L completion}
Let $L$ be a set of primes containing all prime divisors of $n$.
Then the definition of ${\rm cl}_s$ in \ref{para: cycle class of a ho rat point} still makes sense if we start only with a homotopy fixed point $s$ of the pro-$L$-completion $(\widehat{\rm Et}(Y/K)\times_{BG_K}EG_K)_L^\wedge$ in $\mathcal{H}(\hat{\mathcal{S}}_{G_K})$ (see \cite{Quick12}, in particularly Rem.~3.3, see also \ref{para: geometric pro l completion} for $L=\{ \ell \}$).
\end{sect}

\subsection*{Special fibres of regular models of (\boldmath{$p$}-adic) curves.}
The diagonal $\Delta_Y$ is Cartier if and only if $Y/K$ is smooth.
Still, for a homotopy rational or homotopy fixed point of the (possibly non smooth) special fibre of a proper flat regular model of a curve we can define a cycle class.
Fix a $p$-adic field $k$ with ring of integers $\mathfrak{o}$ and residue field $\mathbb{F}$ (or more generally: $\mathfrak{o}$ a henselian discrete valuation ring), a smooth curve $X/k$ and a proper flat regular model $X\hookrightarrow \mathfrak{X}$ over $\mathfrak{o}$ with reduced special fibre $Y = (\mathfrak{X}\otimes_{\mathfrak{o}}\mathbb{F})_{\rm red}$.
Let ${\bar{s}}$ be a homotopy rational or homotopy fixed point of the special fibre $Y/\mathbb{F}$.
Fix an integer $n$ prime to the residue characteristic.
To define the cycle class ${\rm cl}_{\bar{s}}$ in ${\rm H}^2(Y,\boldsymbol{\mu}_n) = {\rm H}^2(\mathfrak{X},\boldsymbol{\mu}_n)$, let us first find a suitable replacement of the pullback map ${\rm id}\otimes^{\mathbb{L}} {\bar{s}}^\ast$ in \ref{para: cycle class of a ho rat point}:

\begin{sect}\label{para: pullback map}
Let $\mathfrak{X}^\bullet/\mathfrak{o}$ be the punctured model $\mathfrak{X}\setminus {\rm Sing}(Y)$.
Let $f\colon \mathfrak{X}\rightarrow S$ and $f^\bullet\colon \mathfrak{X}^\bullet\rightarrow S$ be the structural maps for $S$ the spectrum of $\mathfrak{o}$.
Since $\mathfrak{o}$ is a henselian local ring, we have ${\rm H}^2(\mathfrak{X}^\bullet,\boldsymbol{\mu}_n) = \mathbb{H}^2(G_{\mathbb{F}},\sigma^\ast\mathbb{R}f_\ast^\bullet\boldsymbol{\mu}_n)$.
Similarly, using the projection formula ($f^\bullet$ is no longer proper, but we can use e.g.~\cite[Prop.~6.5.5]{Fu11} instead), we have
\begin{equation*}
{\rm H}^2(\mathfrak{X}^\bullet\times_{\mathfrak{o}}\mathfrak{X},\boldsymbol{\mu}_n) = \mathbb{H}^2(G_{\mathbb{F}},\sigma^\ast\mathbb{R}f_\ast^\bullet\boldsymbol{\mu}_n\otimes^{\mathbb{L}}\sigma^\ast\mathbb{R}f_\ast\mathbb{Z}/n). 
\end{equation*}
By proper base change and topological invariance, $\sigma^\ast\mathbb{R}f_\ast\mathbb{Z}/n$ is quasi-isomorphic to $\mathbb{R}f_{\sigma,\ast}\mathbb{Z}/n$ for $f_\sigma\colon Y \rightarrow {\rm Spec}(\mathbb{F})$ the structural map.
As explained in \ref{para: definition of the cycle class, reformulation II}, we thus get a pullpack map ${\bar{s}}^\ast\colon \sigma^\ast\mathbb{R}f_\ast\mathbb{Z}/n \rightarrow \mathbb{Z}/n$.
Tensoring with the identity of $\sigma^\ast\mathbb{R}f_\ast^\bullet\boldsymbol{\mu}_n$ and taking $\mathbb{H}^2(G_{\mathbb{F}},-)$ we get a pullback map
\begin{equation*}
{\rm id}\otimes^{\mathbb{L}} {\bar{s}}^\ast\colon
 \xymatrix{
  {\rm H}^2(\mathfrak{X}^\bullet\times_{\mathfrak{o}}\mathfrak{X},\boldsymbol{\mu}_n)\ar[r] &
  {\rm H}^2(\mathfrak{X}^\bullet,\boldsymbol{\mu}_n)
 }.
\end{equation*}
\end{sect}

\begin{sect}\label{para: pullback map equivariant}
Moreover, suppose $k/k^G$ is an unramified Galois extension with group $G$, the model $X \hookrightarrow \mathfrak{X}$ is a base-extensions along $\mathfrak{o}/\mathfrak{o}^G$ of a model $X_G\hookrightarrow \mathfrak{X}_G$ and ${\bar{s}}$ is $G$-equivariant (in the week sense that it is contained in $[EG_{\mathbb{F}},\widehat{\rm Et}(Y/\mathbb{F})\times_{BG_{\mathbb{F}}}EG_{\mathbb{F}}]_{\hat{\mathcal{S}}_{G_{\mathbb{F}}}}^G$).
Then the pullback map constructed in \ref{para: pullback map} is $G$-equivariant.
In particular, it induces a map
\begin{equation*}
 \xymatrix{
  {\rm H}^2(\mathfrak{X}^\bullet\times_{\mathfrak{o}}\mathfrak{X},\boldsymbol{\mu}_n)^G\ar[r] &
  {\rm H}^2(\mathfrak{X}^\bullet,\boldsymbol{\mu}_n)^G
 }.
\end{equation*}
\end{sect}

By Gabber's Absolute Purity Theorem (see \cite{Fujiwara02}), ${\rm H}^2(\mathfrak{X},\boldsymbol{\mu}_n) \cong {\rm H}^2(\mathfrak{X}^\bullet,\boldsymbol{\mu}_n)$, allowing us to define ${\rm cl}_{\bar{s}}$ as a pullback of the cycle class of the restricted diagonal:

\begin{sect}\label{para: cycle class of a ho rat point of the special fibre}
The diagonal $\Delta_{\mathfrak{X}}\colon \mathfrak{X} \hookrightarrow \mathfrak{X}\times_{\mathfrak{o}}\mathfrak{X}$ fails to be Cartier only at points $(y,y)$ for $y$ a singular point of the special fibre $Y$.
Thus, $\Delta_{\mathfrak{X}}\vert_{\mathfrak{X}^\bullet\times_{\mathfrak{o}}\mathfrak{X}}$ is Cartier.
Using the pullback map in \ref{para: pullback map}, we get a well defined class $({\rm id}\otimes^{\mathbb{L}} {\bar{s}}^\ast)(\hat{c}_1[\Delta_{\mathfrak{X}}\vert_{\mathfrak{X}^\bullet\times_{\mathfrak{o}}\mathfrak{X}}])$ in ${\rm H}^2(\mathfrak{X}^\bullet,\boldsymbol{\mu}_n)$.
The singular locus ${\rm Sing}(Y)$ has codimension $2$ in the regular scheme $\mathfrak{X}$.
It follows from the Absolute Purity Theorem that the canonical map ${\rm H}^2(\mathfrak{X},\boldsymbol{\mu}_n) \rightarrow {\rm H}^2(\mathfrak{X}^\bullet,\boldsymbol{\mu}_n)$ is an isomorphism.
\end{sect}

Thus we define:

\begin{defin}\label{def: cycle class of ho rat points of special fibre}
Let $Y = (\mathfrak{X}\otimes_{\mathfrak{o}}\mathbb{F})_{\rm red}$ be the reduced special fibre of a regular proper flat model $\mathfrak{X}/\mathfrak{o}$ of a smooth projective $p$-adic curve.
Let $\bar{s}$ be a homotopy fixed or homotopy rational point of $Y/\mathbb{F}$.
For $n$ invertible in $\mathfrak{o}$, we define its cycle class ${\rm cl}_{\bar{s}}$ as the class in ${\rm H}^2(\mathfrak{X},\boldsymbol{\mu}_n)$ corresponding to $({\rm id}\otimes^{\mathbb{L}} {\bar{s}}^\ast)(\hat{c}_1[\Delta_{\mathfrak{X}}\vert_{\mathfrak{X}^\bullet\times_{\mathfrak{o}}\mathfrak{X}}])$.
\end{defin}

\begin{sect}\label{para: cycle class of a ho rat point of the special fibre first properties}
By definition, the cycle classes ${\rm cl}_{\bar{s}}$ are compatible for various $n$.
Further, in the situation of \ref{para: pullback map equivariant}, ${\rm cl}_{\bar{s}}$ is $G$-invariant, i.e., contained in ${\rm H}^2(\mathfrak{X},\boldsymbol{\mu}_n)^G$:
Indeed, $\Delta_{\mathfrak{X}}\vert_{\mathfrak{X}^\bullet\times_{\mathfrak{o}}\mathfrak{X}}$ is $G$-invariant as a base-extension along $\mathfrak{o}^\prime/\mathfrak{o}$ and the pullback map is $G$-equivariant by \ref{para: pullback map equivariant}.
\end{sect}

\begin{sect}\label{para: cycle class of a ho rat point of the special fibre up to L completion}
Let $L$ be a set of primes containing all the prime divisors of $n$.
As in \ref{para: cycle class of a ho rat point up to L completion}, a homotopy fixed point ${\bar{s}}$ of the pro-$L$-completion $(\widehat{\rm Et}(Y/\mathbb{F})\times_{BG_{\mathbb{F}}}EG_{\mathbb{F}})_L^\wedge$ in $\mathcal{H}(\hat{\mathcal{S}}_{G_{\mathbb{F}}})$ suffices to define the cycle class ${\rm cl}_{\bar{s}}$ in ${\rm H}^2(\mathfrak{X},\boldsymbol{\mu}_n)$.
\end{sect}

\section{A canonical lift of the cycle class to the model}

Fix a $p$-adic field $k$ with ring of integers $\mathfrak{o}$ and residue field $\mathbb{F}$, a smooth geometrically connected projective curve $X/k$ of positive genus and a proper flat regular model $X\hookrightarrow \mathfrak{X}$ over $\mathfrak{o}$ with reduced special fibre $Y = (\mathfrak{X}\otimes_{\mathfrak{o}}\mathbb{F})_{\rm red}$.
Let $\eta$ (resp.\ $\sigma$) be the generic point (resp.\ closed point) of $S={\rm Spec}(\mathfrak{o})$ and denote the respective structural morphisms of $X/k$, $\mathfrak{X}/\mathfrak{o}$ and $Y/\mathbb{F}$ by $f_\eta$, $f$ and $f_\sigma$. 
Let $s$ be a section of the fundamental group sequence $\pi_1(X/k)$.
In abuse of notation, denote by $s$ the induced homotopy rational and homotopy fixed point of $X/k$, as well.
Our aim is to give constructions of canonical lifts of the cycle classes ${\rm cl}_s$ in ${\rm H}^2(X,\boldsymbol{\mu}_{\ell^n})$ to ${\rm H}^2(\mathfrak{X},\boldsymbol{\mu}_{\ell^n})$ for $\ell \neq p$, proving Thm.~\ref{mainthm A}.

\subsection*{A general recipe.}
Let us first develop a general recipe for the construction of a canonical lift of ${\rm cl}_s$ to the cohomology of the model.

\begin{sect}\label{para: quasi-specialized ho rat points}
Let $\bar{s}_\ell$ be a homotopy fixed point of the pro-$\ell$-completion $(\widehat{\rm Et}(Y/\mathbb{F})\times_{BG_{\mathbb{F}}}EG_{\mathbb{F}})_\ell^\wedge$.
The geometric fundamental group $\pi_1(Y\otimes_{\mathbb{F}}\mathbb{F}^{\rm s})$ is $\ell$-good (see Lem.~\ref{lem: homotopy type of a curve}).
Using \ref{para: geometric pro l completion of a group}, $\bar{s}_\ell$ induces a homotopy fixed point of $B\pi_1^{(\ell)}(Y) \times_{BG_{\mathbb{F}}}EG_{\mathbb{F}}$, i.e.\ a section of the geometrically pro-$\ell$ completed fundamental group sequence $\pi_1^{(\ell)}(Y/\mathbb{F})$.
In abuse of notation, let us denote this section by $\bar{s}_\ell$, as well.
We call the homotopy fixed $\bar{s}_\ell$ a {\bf quasi-specialization} of $s$, 
if the induced section is a {\bf specialization} of the section $s$ to $\pi_1^{\ell}(Y/\mathbb{F})$ (cf.~\cite[Chap.~8]{Stix12}), i.e., if the canonical square
\begin{equation*}
 \xymatrix{
  G_k \ar[r]^-s \ar[d] &
  \pi_1(X) \ar[d]^-{\rm sp}
 \\
  G_{\mathbb{F}} \ar[r]^-{\bar{s}_\ell} &
  \pi_1^{(\ell)}(Y)
 }
\end{equation*}
commutes.
This is equivalent to the triviality of the map ${\rm ram}_s^{(\ell)}\colon I_k \rightarrow \pi_1^{(\ell)}(Y)$ induced by the ramification map
\begin{equation*}
 {\rm ram}_s\colon
 \xymatrix{
  I_k \ar@{^(->}[r] \ar@/_1pc/[rrr] &
  G_k \ar[r]^-s &
  \pi_1(X) \ar@{->>}[r]^-{\rm sp} &
  \pi_1(Y)
 }.
\end{equation*}
\end{sect}

By properness, $\widehat{\rm Et}(Y)\rightarrow \widehat{\rm Et}(\mathfrak{X})$ is a weak equivalence (e.g.~\cite[Thm.~1.2, Prop.~2.1 and Prop.~2.2]{SGA4.5}).
Via the canonical epimorphism of Galois groups $\kappa\colon G_k\twoheadrightarrow G_{\mathbb{F}}$, we get the specialization map
\begin{equation*}
 {\rm sp}\colon
 \xymatrix{
  \widehat{\rm Et}(X/k)\ar[r] &
  \widehat{\rm Et}(Y/\mathbb{F})
 }
 {\rm ~resp.~}
 \xymatrix{
  \widehat{\rm Et}(X/k)\times_{BG_{\mathbb{F}}}EG_{\mathbb{F}} \ar[r] &
  \widehat{\rm Et}(Y/\mathbb{F})\times_{BG_{\mathbb{F}}}EG_{\mathbb{F}}
 }
\end{equation*}
in the relative homotopy category $\mathcal{H}(\hat{\mathcal{S}}_{/ BG_{\mathbb{F}}})$ resp.\ in the equivariant homotopy category $\mathcal{H}(\hat{\mathcal{S}}_{G_{\mathbb{F}}})$.
The induced maps on fundamental groups and cohomology are just the usual respective specialization maps.

\begin{sect}\label{para: admissible pull back map}
Let $\bar{s}_\ell$ be a homotopy fixed point of the pro-$\ell$-completion $(\widehat{\rm Et}(Y/\mathbb{F})\times_{BG_{\mathbb{F}}}EG_{\mathbb{F}})_\ell^\wedge$.
Assume it is a quasi-specialization of $s$.
For the moment, let us call the induced pullback map on cohomology cochains $\bar{s}_\ell^\ast\colon C^\bullet(\widehat{\rm Et}(Y/\mathbb{F})\times_{BG_{\mathbb{F}}}EG_{\mathbb{F}},\mathbb{Z}/\ell^n) \rightarrow \mathbb{Z}/\ell^n$ {\bf admissible}, if
\begin{equation}\label{eq: specialization in cohomological settings}
 \xymatrix{
  C^\bullet(\widehat{\rm Et}(Y/\mathbb{F})\times_{BG_{\mathbb{F}}}EG_{\mathbb{F}},\mathbb{Z}/\ell^n) \ar[r]^-{\bar{s}_\ell^\ast} \ar[d]^-{{\rm sp}^\ast} &
  C^\bullet(EG_{\mathbb{F}},\mathbb{Z}/\ell^n)\simeq \mathbb{Z}/\ell^n \ar[d]
 \\
  C^\bullet(\widehat{\rm Et}(X/k)\times_{BG_{\mathbb{F}}}EG_{\mathbb{F}},\mathbb{Z}/\ell^n) \ar[r]^-{s^\ast} &
  C^\bullet(BG_k\times_{BG_{\mathbb{F}}}EG_{\mathbb{F}},\mathbb{Z}/\ell^n)
 }
\end{equation}
is a commutative square in $\mathcal{D}^+(\underline{\rm Mod}_{G_{\mathbb{F}}})$.
\end{sect}

\begin{rem}\label{rem: admissible pull back map subtlety}
A priori, it is not clear that an arbitrary quasi-specialization $\bar{s}_\ell$ of $s$ has an admissible pullback map:
We only know that $\bar{s}_\ell$ and $s$ are compatible on fundamental groups so we only get the commutative diagram
\begin{equation*}
 \xymatrix{
  C^\bullet(B\Pi(\widehat{\rm Et}(Y/\mathbb{F})\times_{BG_{\mathbb{F}}}EG_{\mathbb{F}}),\mathbb{Z}/\ell^n) \ar[r]^-{\bar{s}_\ell^\ast} \ar[d]^-{{\rm sp}^\ast} &
  C^\bullet(EG_{\mathbb{F}},\mathbb{Z}/\ell^n)\simeq \mathbb{Z}/\ell^n \phantom{.} \ar[d]
 \\
  C^\bullet(B\Pi(\widehat{\rm Et}(X/k)\times_{BG_{\mathbb{F}}}EG_{\mathbb{F}}),\mathbb{Z}/\ell^n) \ar[r]^-{s^\ast} &
  C^\bullet(BG_k\times_{BG_{\mathbb{F}}}EG_{\mathbb{F}},\mathbb{Z}/\ell^n).
 }
\end{equation*}
Fortunately, in Thm.~\ref{thm: specialized homotopy rational point pro l}, below, we will prove that a posteriori every quasi-specialization $\bar{s}_\ell$ has an admissible pullback map.
\end{rem}

\begin{lem}\label{lem: canonical lift of the pullback map}
Let $\bar{s}_\ell$ be a homotopy fixed point of $(\widehat{\rm Et}(Y/\mathbb{F})\times_{BG_{\mathbb{F}}}EG_{\mathbb{F}})_\ell^\wedge$.
Suppose $\bar{s}_\ell$ is a quasi-specialization of $s$ and its induced pullback map $\bar{s}_\ell^\ast$ is admissible.
Then the pullback map ${\rm id}\otimes^{\mathbb{L}} \bar{s}_\ell^\ast$ constructed in \ref{para: pullback map} is compatible with ${\rm id}\otimes^{\mathbb{L}} s^\ast$ (as constructed in \ref{para: definition of the cycle class, reformulation II}), i.e., we have a commutative square
\begin{equation*}
 \xymatrix{
  {\rm H}^2(\mathfrak{X}^\bullet\times_{\mathfrak{o}}\mathfrak{X},\boldsymbol{\mu}_{\ell^n}) \ar[r]^-{{\rm id}\otimes^{\mathbb{L}} \bar{s}_\ell^\ast} \ar[d]^-{\eta^\ast} &
  {\rm H}^2(\mathfrak{X}^\bullet,\boldsymbol{\mu}_{\ell^n}) \phantom{.} \ar[d]^-{\eta^\ast}
 \\
  {\rm H}^2(X\times_kX,\boldsymbol{\mu}_{\ell^n}) \ar[r]^-{{\rm id}\otimes^{\mathbb{L}} s^\ast} &
  {\rm H}^2(X,\boldsymbol{\mu}_{\ell^n}).
 }
\end{equation*}
\end{lem}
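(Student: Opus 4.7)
The plan is to upgrade the commutativity of the cohomological square we want to prove to a corresponding commutativity of a square in $\mathcal{D}^+(\underline{\rm Mod}_{G_{\mathbb{F}}})$, obtained from the admissibility diagram (\ref{eq: specialization in cohomological settings}) by a derived tensor product with the identity. Applying $\mathbb{H}^2(G_{\mathbb{F}},-)$ (and using $\kappa\colon G_k \twoheadrightarrow G_{\mathbb{F}}$ for the generic row) then yields the claim.

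First I would rewrite both horizontal maps in intrinsic derived-category terms. By \ref{para: pullback map}, ${\rm id}\otimes^{\mathbb{L}} \bar{s}_\ell^\ast$ is obtained by tensoring the map $\bar{s}^\ast\colon \sigma^\ast\mathbb{R}f_\ast\mathbb{Z}/\ell^n \simeq \mathbb{R}f_{\sigma,\ast}\mathbb{Z}/\ell^n \to \mathbb{Z}/\ell^n$ with the identity on $\sigma^\ast\mathbb{R}f^\bullet_\ast\boldsymbol{\mu}_{\ell^n}$, where (as in \ref{para: definition of the cycle class, reformulation II}) Verdier's hypercover theorem identifies $\mathbb{R}f_{\sigma,\ast}\mathbb{Z}/\ell^n$ with $C^\bullet(\widehat{\rm Et}(Y/\mathbb{F})\times_{BG_{\mathbb{F}}}EG_{\mathbb{F}},\mathbb{Z}/\ell^n)$ and $\bar{s}^\ast$ with the cochain pullback along the homotopy fixed point (via the pro-$\ell$-completion isomorphism on cohomology cochains of \ref{para: geometric pro l completion}). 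The same description yields ${\rm id}\otimes^{\mathbb{L}} s^\ast$ on the generic side, starting from $\mathbb{R}f_{\eta,\ast}\boldsymbol{\mu}_{\ell^n}\otimes^{\mathbb{L}}\mathbb{R}f_{\eta,\ast}\mathbb{Z}/\ell^n$.

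Next I would describe $\eta^\ast$ in these derived terms. Since $\mathfrak{o}$ is henselian, ${\rm H}^q(\mathfrak{X}^\bullet,\boldsymbol{\mu}_{\ell^n})\simeq \mathbb{H}^q(G_{\mathbb{F}},\sigma^\ast\mathbb{R}f^\bullet_\ast\boldsymbol{\mu}_{\ell^n})$ and analogously for the product; the restriction to the generic fibre is induced by the inflation along $\kappa$ together with the natural specialization morphism $\sigma^\ast\mathbb{R}f^{(\bullet)}_\ast\boldsymbol{\mu}_{\ell^n} \to \mathbb{R}f_{\eta,\ast}\boldsymbol{\mu}_{\ell^n}$ (resp.~the analogous map for the product), which under the identifications of the previous step corresponds to ${\rm sp}^\ast$ on cohomology cochains of the associated homotopy types. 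Admissibility of $\bar{s}_\ell^\ast$ asserts exactly that the square combining ${\rm sp}^\ast$ with $\bar{s}_\ell^\ast$ and $s^\ast$ commutes in $\mathcal{D}^+(\underline{\rm Mod}_{G_{\mathbb{F}}})$. Taking the derived tensor product of this admissibility diagram with the identity morphism of $\sigma^\ast\mathbb{R}f^\bullet_\ast\boldsymbol{\mu}_{\ell^n}$ (together with the naturality of specialization on that factor) yields a commutative square of $G_{\mathbb{F}}$-complexes whose $\mathbb{H}^2$, interpreted via the cohomological identifications above, is precisely the square in the statement.

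The main obstacle is not conceptual but bookkeeping: one must transport all cochain-level maps consistently through (i) the identification of derived pushforwards of constant sheaves with cohomology cochains of \'etale homotopy types, (ii) the pro-$\ell$-completion isomorphism on cohomology cochains, (iii) proper base change for the $\mathbb{Z}/\ell^n$-factor on the closed side and the henselian projection formula for the $\boldsymbol{\mu}_{\ell^n}$-factor on $\mathfrak{X}^\bullet$, and (iv) the Galois change from $G_k$ to $G_{\mathbb{F}}$ via $\kappa$. Once these functorialities are assembled, the only nontrivial ingredient is admissibility, which supplies precisely the compatibility not forced by the formalism.
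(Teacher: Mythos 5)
Your proposal is correct and takes essentially the same route as the paper's proof: translate the admissibility square into a commutative square of derived pushforwards in $\mathcal{D}^+(\underline{\rm Mod}_{G_{\mathbb{F}}})$ via the identifications of \ref{para: definition of the cycle class, reformulation II}, tensor with the $\boldsymbol{\mu}_{\ell^n}$-factor using naturality of the specialization/base-change maps, pass from $G_{\mathbb{F}}$ to $G_k$ via $\kappa$, and take $\mathbb{H}^2$. The bookkeeping steps you list are exactly those carried out in the paper.
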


\begin{proof}
Arguing similarly to \ref{para: definition of the cycle class, reformulation II}, we get an isomorphism in the derived category $\mathcal{D}^+(\underline{\rm Mod}_{G_{\mathbb{F}}})$ between $\sigma^\ast \mathbb{R}\eta_\ast \mathbb{R}f_{\eta,\ast}\mathbb{Z}/\ell^n$ and $C^\bullet(\widehat{\rm Et}(X/k)\times_{BG_{\mathbb{F}}}EG_{\mathbb{F}},\mathbb{Z}/\ell^n)$, compatible with the isomorphism between $\sigma^\ast \mathbb{R}f_\ast\mathbb{Z}/\ell^n$ and $C^\bullet(\widehat{\rm Et}(Y/\mathbb{F})\times_{BG_{\mathbb{F}}}EG_{\mathbb{F}},\mathbb{Z}/\ell^n)$ via the specializ{\-}ation- and the base change map $\mathbb{R}f_\ast \mathbb{Z}/\ell^n \rightarrow \mathbb{R}\eta_\ast \mathbb{R}f_{\eta,\ast} \mathbb{Z}/\ell^n$. 
Under these identifications, (\ref{eq: specialization in cohomological settings}) translates to the commutative diagram
\begin{equation*}
 \xymatrix{
  \sigma^\ast \mathbb{R}f_\ast\mathbb{Z}/\ell^n \ar[r]^-{\bar{s}_\ell^\ast} \ar[d] & 
  \mathbb{Z}/\ell^n \phantom{.} \ar[d]
 \\
  \sigma^\ast \mathbb{R}\eta_\ast \mathbb{R}f_{\eta,\ast}\mathbb{Z}/\ell^n \ar[r]^-{s^\ast} &
  \sigma^\ast \mathbb{R}\eta_\ast \mathbb{Z}/\ell^n .
 }
\end{equation*}
Tensoring with the morphism $\sigma^\ast\mathbb{R}f_\ast^\bullet\boldsymbol{\mu}_{\ell^n} \rightarrow \sigma^\ast \mathbb{R}\eta_\ast \mathbb{R}f_{\eta,\ast}\boldsymbol{\mu}_{\ell^n}$ (note that $f_\eta^\bullet = f_\eta$) and taking $\mathbb{H}^2(G_{\mathbb{F}},-)$, we can extend this to the commutative diagram
\begin{equation*}
\resizebox{.9\hsize}{!}{
 \xymatrix{
  \mathbb{R}\Gamma(\mathfrak{X}^\bullet\times_{\mathfrak{o}}\mathfrak{X},\boldsymbol{\mu}_{\ell^n}) \ar[r]^-{{\rm id}\otimes^{\mathbb{L}} \bar{s}_\ell^\ast} \ar[d] &
  \mathbb{R}\Gamma(\mathfrak{X}^\bullet,\boldsymbol{\mu}_{\ell^n}) \phantom{.} \ar[d]
 \\
  \mathbb{R}\Gamma(G_{\mathbb{F}},\sigma^\ast \mathbb{R}\eta_\ast \mathbb{R}f_{\eta,\ast}\boldsymbol{\mu}_{\ell^n} \otimes^{\mathbb{L}} \sigma^\ast \mathbb{R}\eta_\ast \mathbb{R}f_{\eta,\ast} \mathbb{Z}/\ell^n) \ar[r] \ar[d] &
  \mathbb{R}\Gamma(G_{\mathbb{F}},\sigma^\ast \mathbb{R}\eta_\ast \mathbb{R}f_{\eta,\ast}\boldsymbol{\mu}_{\ell^n} \otimes^{\mathbb{L}} \sigma^\ast \mathbb{R}\eta_\ast \mathbb{Z}/\ell^n) \phantom{.} \ar[d]
 \\
  \mathbb{R}\Gamma(G_{k},\eta^\ast \mathbb{R}\eta_\ast \mathbb{R}f_{\eta,\ast}\boldsymbol{\mu}_{\ell^n} \otimes^{\mathbb{L}} \eta^\ast \mathbb{R}\eta_\ast \mathbb{R}f_{\eta,\ast} \mathbb{Z}/\ell^n) \ar[r] \ar[d] &
  \mathbb{R}\Gamma(G_{k},\eta^\ast \mathbb{R}\eta_\ast \mathbb{R}f_{\eta,\ast}\boldsymbol{\mu}_{\ell^n} \otimes^{\mathbb{L}} \eta^\ast \mathbb{R}\eta_\ast \mathbb{Z}/\ell^n) \phantom{.}  \ar[d]
 \\
  \mathbb{R}\Gamma(X\times_kX,\boldsymbol{\mu}_{\ell^n}) \ar[r]^-{{\rm id}\otimes^{\mathbb{L}} s^\ast} &
  \mathbb{R}\Gamma(X,\boldsymbol{\mu}_{\ell^n}).
 }}
\end{equation*}
Here the middle square is induced by the canonical epimorphism $\kappa\colon G_k\twoheadrightarrow G_{\mathbb{F}}$. Use that a $\mathbb{Z}/\ell^n[G]$-module is flat if and only if the underlying $\mathbb{Z}/\ell^n$-module is flat and $\kappa^\ast \circ \sigma^\ast$ is just $\eta^\ast$.
The lower square is the canonical square induced by the tensor product of the canonical maps $\eta^\ast\mathbb{R}\eta_\ast \rightarrow {\rm id}$.
Unravelling the definitions, we find that the vertical compositions are indeed the canonical maps induced by $\eta$.
\end{proof}

Plugging in the Chern class $\hat{c}_1[\Delta_{\mathfrak{X}}\vert_{\mathfrak{X}^\bullet\times_{\mathfrak{o}}\mathfrak{X}}]$ of the restricted diagonal, we get a lift of ${\rm cl}_s$ to ${\rm H}^2(\mathfrak{X},\boldsymbol{\mu}_{\ell^n})$:

\begin{cor}\label{cor: lift of the cycle class}
Let $\bar{s}_\ell$ be a homotopy fixed point of $(\widehat{\rm Et}(Y/\mathbb{F})\times_{BG_{\mathbb{F}}}EG_{\mathbb{F}})_\ell^\wedge$.
Suppose it is a quasi-specialization of $s$ (cf.~\ref{para: quasi-specialized ho rat points}) and its induced pullback map $\bar{s}_\ell^\ast$ is admissible (cf.~\ref{para: admissible pull back map}).
Then the cycle class ${\rm cl}_{\bar{s}_\ell} \in {\rm H}^2(\mathfrak{X},\boldsymbol{\mu}_{\ell^n})$ of $\bar{s}_\ell$ restricts to ${\rm cl}_{s} \in {\rm H}^2(X,\boldsymbol{\mu}_{\ell^n})$.
\end{cor}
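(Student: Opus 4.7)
The plan is to deduce this by chasing the Chern class of the restricted diagonal through the commutative square supplied by Lem.~\ref{lem: canonical lift of the pullback map}, together with two elementary naturality observations.

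First, I would record the naturality of Gabber's absolute purity isomorphism used to define ${\rm cl}_{\bar{s}_\ell}$ in~\ref{para: cycle class of a ho rat point of the special fibre}. Since the open immersion $X \hookrightarrow \mathfrak{X}$ factors through $\mathfrak{X}^\bullet$, the restriction map ${\rm H}^2(\mathfrak{X},\boldsymbol{\mu}_{\ell^n}) \to {\rm H}^2(X,\boldsymbol{\mu}_{\ell^n})$ decomposes as
\[
 {\rm H}^2(\mathfrak{X},\boldsymbol{\mu}_{\ell^n}) \;\liso\; {\rm H}^2(\mathfrak{X}^\bullet,\boldsymbol{\mu}_{\ell^n}) \;\xrightarrow{\eta^\ast}\; {\rm H}^2(X,\boldsymbol{\mu}_{\ell^n}),
\]
so by Def.~\ref{def: cycle class of ho rat points of special fibre}, the restriction of ${\rm cl}_{\bar{s}_\ell}$ to $X$ is $\eta^\ast\!\big(({\rm id}\otimes^{\mathbb{L}} \bar{s}_\ell^\ast)(\hat{c}_1[\Delta_{\mathfrak{X}}\vert_{\mathfrak{X}^\bullet\times_{\mathfrak{o}}\mathfrak{X}}])\big)$.

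Second, I would note that the diagonal behaves well under passage to the generic fibre: the base change of the effective Cartier divisor $\Delta_{\mathfrak{X}}\vert_{\mathfrak{X}^\bullet\times_{\mathfrak{o}}\mathfrak{X}}$ along the open immersion $X \times_k X \hookrightarrow \mathfrak{X}^\bullet \times_{\mathfrak{o}} \mathfrak{X}$ (whose image is precisely the generic fibre, since the singular locus removed from $\mathfrak{X}^\bullet$ lies in the special fibre) equals $\Delta_X$. Hence by functoriality of the first Chern class,
\[
 \eta^\ast \hat{c}_1[\Delta_{\mathfrak{X}}\vert_{\mathfrak{X}^\bullet\times_{\mathfrak{o}}\mathfrak{X}}] \;=\; \hat{c}_1[\Delta_X] \quad \text{in } {\rm H}^2(X\times_k X,\boldsymbol{\mu}_{\ell^n}).
\]

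The main step is then to apply Lem.~\ref{lem: canonical lift of the pullback map} to the element $\hat{c}_1[\Delta_{\mathfrak{X}}\vert_{\mathfrak{X}^\bullet\times_{\mathfrak{o}}\mathfrak{X}}]$ and chase it along both sides of the commutative square. Going right then down gives ${\rm cl}_{\bar{s}_\ell}\vert_X$ by the first observation, while going down then right gives $({\rm id}\otimes^{\mathbb{L}} s^\ast)(\hat{c}_1[\Delta_X])$ by the second observation, and the latter is ${\rm cl}_s$ by the reformulation in~\ref{para: definition of the cycle class, reformulation II}. Commutativity yields the desired equality ${\rm cl}_{\bar{s}_\ell}\vert_X = {\rm cl}_s$.

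There is no substantial obstacle here: once Lem.~\ref{lem: canonical lift of the pullback map} is in hand, the corollary is a formal consequence of that lemma, the functoriality of Chern classes, and the naturality of Gabber's purity isomorphism in the pair $(\mathfrak{X},\mathfrak{X}^\bullet)$. All the genuine work has been packaged into the proof of the lemma, which is where the admissibility hypothesis on $\bar{s}_\ell^\ast$ is exploited.
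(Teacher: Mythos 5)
Your proof is correct and follows exactly the route the paper intends: the corollary is stated immediately after the sentence ``Plugging in the Chern class $\hat{c}_1[\Delta_{\mathfrak{X}}\vert_{\mathfrak{X}^\bullet\times_{\mathfrak{o}}\mathfrak{X}}]$ of the restricted diagonal, we get a lift of ${\rm cl}_s$,'' i.e.\ the paper's (implicit) proof is precisely the diagram chase through Lem.~\ref{lem: canonical lift of the pullback map} that you carry out. Your two preliminary observations --- that restriction to $X$ factors through the purity isomorphism via $\mathfrak{X}^\bullet$, and that the restricted diagonal pulls back to $\Delta_X$ over the generic fibre --- are exactly the details the paper leaves to the reader.
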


\subsection*{Quasi-specialized homotopy fixed points.}
By Cor.\ \ref{cor: lift of the cycle class}, we get a canonical lift of the cycle class ${\rm cl}_s$ to ${\rm H}^2(\mathfrak{X},\boldsymbol{\mu}_{\ell^n})$, if we can find a canonical quasi-specialization with admissible pullback map $\bar{s}_\ell$ of $s$ to $(\widehat{\rm Et}(Y/\mathbb{F})\times_{BG_{\mathbb{F}}}EG_{\mathbb{F}})_\ell^\wedge$.
At least a specialized section $\bar{s}_\ell$ of $\pi_1^{(\ell)}(Y/\mathbb{F})$ always does exist:

\begin{lem}\label{lem: unramified on geometric l completion}
Any section $s$ of $\pi_1(X/k)$ specializes to a (necessarily unique) section $\bar{s}_\ell$ of $\pi_1^{(\ell)}(Y/\mathbb{F})$.
\end{lem}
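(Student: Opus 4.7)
Uniqueness of $\bar{s}_\ell$ is immediate: the composition ${\rm sp} \circ s$ followed by pro-$\ell$-completion yields a map $G_k \to \pi_1^{(\ell)}(Y)$ lifting the canonical projection $G_k \twoheadrightarrow G_\mathbb{F}$, and any candidate $\bar{s}_\ell$ fitting into the specialization square must agree with this composition; by surjectivity of $G_k \twoheadrightarrow G_\mathbb{F}$ it is then uniquely determined.

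For existence, by the criterion of \ref{para: quasi-specialized ho rat points} it suffices to show that the induced pro-$\ell$ ramification map
\[
 {\rm ram}_s^{(\ell)}\colon I_k \hookrightarrow G_k \xrightarrow{s} \pi_1(X) \xrightarrow{{\rm sp}} \pi_1(Y) \twoheadrightarrow \pi_1^\ell(Y \otimes_{\mathbb{F}} \mathbb{F}^{\rm s})
\]
vanishes. The image automatically lies in the geometric part $\pi_1^\ell(Y \otimes \mathbb{F}^{\rm s})$ because its further projection to $G_\mathbb{F}$ equals the zero map $I_k \hookrightarrow G_k \twoheadrightarrow G_\mathbb{F}$. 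Since the target is pro-$\ell$ and $\ell \neq p = \mathrm{char}(\mathbb{F})$, the homomorphism factors through the maximal pro-$\ell$ quotient of $I_k$, which the tame character identifies with $\mathbb{Z}_\ell(1)$; let $\phi\colon \mathbb{Z}_\ell(1) \to \pi_1^\ell(Y \otimes \mathbb{F}^{\rm s})$ denote the induced map.

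Applying $s$ to the conjugation identity $s(g\tau g^{-1}) = s(g)s(\tau)s(g)^{-1}$ for $g \in G_k$, $\tau \in I_k$, and projecting to $\pi_1^{(\ell)}(Y)$ shows that $\phi$ is $G_\mathbb{F}$-equivariant, with $G_\mathbb{F}$ acting cyclotomically on $\mathbb{Z}_\ell(1)$ and on $\pi_1^\ell(Y \otimes \mathbb{F}^{\rm s})$ by the outer action from $\pi_1^{(\ell)}(Y/\mathbb{F})$. Writing $\gamma_0 = \phi(1)$ and $q = |\mathbb{F}|$, equivariance with respect to a geometric Frobenius yields the identity $F(\gamma_0) = \gamma_0^q$ up to inner conjugation; passing to the successive graded pieces $\mathrm{gr}^n$ of the lower central series of $\pi_1^\ell(Y \otimes \mathbb{F}^{\rm s})$, on which inner automorphisms act trivially, this becomes the exact identity $F(\bar\gamma_0) = q\bar\gamma_0$ in each $\mathrm{gr}^n$.

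It then remains to verify that $q$ does not occur as a Frobenius eigenvalue on any $\mathrm{gr}^n$, which would force $\bar\gamma_0 = 0$ in each $\mathrm{gr}^n$ and hence $\gamma_0 = 1$. Using the description of $\pi_1^\ell(Y \otimes \mathbb{F}^{\rm s})$ from Lem.~\ref{lem: homotopy type of a curve} as a free pro-$\ell$ product of the $\pi_1^\ell$ of the smooth normalized components $\tilde{Y}_i$ and a free pro-$\ell$ group encoding the dual graph, the Weil conjectures bound the Frobenius eigenvalues on $H_1$ of each smooth projective component of positive genus in absolute value by $\sqrt{q}$, while the graph contribution has weight zero; a Magnus--Witt-type combinatorial analysis then confirms that $q$ appears only in $\mathrm{gr}^2$ via the Weil symplectic classes, which are exactly killed by the surface-group relations $\prod[a_i, b_i] = 1$ of the smooth components. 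The main obstacle is this final combinatorial weight bookkeeping on the free pro-$\ell$ product — controlling cross-bracket contributions between distinct components — for which the reduction to smooth pieces via normalization (Lem.~\ref{lem: homotopy type of a curve}) is essential.
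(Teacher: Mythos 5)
Your uniqueness argument and your reduction of existence to the vanishing of the pro-$\ell$ ramification map ${\rm ram}_s^{(\ell)}$ are correct and coincide with the set-up in \ref{para: quasi-specialized ho rat points}; note, however, that the paper does not prove this vanishing itself but simply invokes \cite[Prop.~91]{Stix12}. You instead attempt a self-contained proof by a weight argument on the lower central series of $\pi_1^\ell(Y\otimes_{\mathbb{F}}\mathbb{F}^{\rm s})$, and this is where there is a genuine gap: the statement you need, namely that $q$ is not a Frobenius eigenvalue on any graded piece $\mathrm{gr}^n$, is \emph{false} for $n\geq 2$. Concretely, if $\tilde{Y}_i$ is a component of geometric genus $g\geq 2$ with $H_1$-eigenvalues $\alpha_1,\dots,\alpha_g,q/\alpha_1,\dots,q/\alpha_g$, then by Labute's description $\mathrm{gr}^2$ of the corresponding pro-$\ell$ surface group is $\wedge^2 H_1/\langle\omega\rangle$; the $q$-eigenspace of $\wedge^2H_1$ contains the $g$ independent vectors $v_i\wedge w_i$ (one for each conjugate pair), while the surface relation kills only the single line spanned by their sum $\omega$. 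So a $q$-eigenspace of dimension at least $g-1$ survives in $\mathrm{gr}^2$, contrary to your claim that the symplectic classes are ``exactly killed.'' Further copies of the eigenvalue $q$ arise in higher $\mathrm{gr}^n$ from mixed brackets of two weight-one classes with weight-zero (dual-graph) classes, and already in $\mathrm{gr}^2$ from brackets between $H_1$ of two distinct components. The ``combinatorial weight bookkeeping'' you defer is therefore not bookkeeping: the inductive step $\bar\gamma_0=0$ in $\mathrm{gr}^n\Rightarrow\gamma_0\in L^{(n+1)}$ breaks down at $n=2$, and no purely graded/linearized argument of this shape can close it.

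The correct route is the one the paper takes: the triviality of ${\rm ram}_s^{(\ell)}$ is precisely the content of \cite[Prop.~91]{Stix12}, whose proof uses more about the element $\gamma={\rm ram}_s(\tau)$ and the arithmetic of $\pi_1^{(\ell)}(Y)$ than the Frobenius eigenvalues on the associated graded Lie algebra. If you want a self-contained treatment you must either reproduce that argument or find a genuinely different one; as written, your proof establishes the easy half (uniqueness, and triviality of $\bar\gamma_0$ in $\mathrm{gr}^1$ by weights $0$ and $1$ on $H^1$ of the proper curve $Y$) but not the lemma.
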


\begin{proof}
This is a direct consequence of \cite[Prop.~91]{Stix12}:
The ramification map (cf.~\ref{para: quasi-specialized ho rat points}) induces the geometrically pro-$\ell$ completed ramification map ${\rm ram}_s^{(\ell)}\colon I_k \rightarrow \pi_1^{(\ell)}(Y)$.
By construction, ${\rm ram}_s^{(\ell)}$ factors over $\pi_1^{\ell}(Y\otimes_{\mathbb{F}}\mathbb{F}^{\rm s})$, i.e., ${\rm ram}_s^{(\ell)}$ is trivial by loc.~cit.. 
\end{proof}

Together with Lem.~\ref{lem: sections vs ho rat points} this implies the existence of a unique quasi-specialized homotopy fixed point:

\begin{cor}\label{cor: unramified on geometric l completion and ho rat pts}
For any (possibly ramified) section $s$ of $\pi_1(X/k)$ there is a unique quasi-specialized homotopy fixed point $\bar{s}_\ell$ of the pro-$\ell$ completion $(\widehat{\rm Et}(Y/\mathbb{F})\times_{BG_{\mathbb{F}}}EG_{\mathbb{F}})_{\ell}^\wedge$ in $\mathcal{H}(\hat{\mathcal{S}}_{G_{\mathbb{F}}})$.
\\
Moreover, suppose $k/k^G$ is an unramified Galois extension with group $G$, the model $X \hookrightarrow \mathfrak{X}$ is a base-extension along $\mathfrak{o}/\mathfrak{o}^G$ of a model $X_G\hookrightarrow \mathfrak{X}_G$ and $s$ is the restriction of a section $s_G$ of $\pi_1(X_G/k^G)$.
Then $\bar{s}_\ell$ is contained in $[EG_{\mathbb{F}},\widehat{\rm Et}(Y/\mathbb{F})\times_{BG_{\mathbb{F}}}EG_{\mathbb{F}}]_{\hat{\mathcal{S}}_{G_{\mathbb{F}}}}^G$.
\end{cor}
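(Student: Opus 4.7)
The plan is to combine Lem.~\ref{lem: unramified on geometric l completion}, which produces a unique specialized section of the geometrically pro-$\ell$ completed fundamental group sequence $\pi_1^{(\ell)}(Y/\mathbb{F})$, with the dictionary between sections and homotopy fixed points provided by Lem.~\ref{lem: sections vs ho rat points} (respectively its equivariant refinement Lem.~\ref{lem: sections vs ho rat points equivariant}). The key observation that makes both lemmas directly applicable is that $\mathbb{F}$ is a finite field, so $G_{\mathbb{F}} = \hat{\mathbb{Z}}$, which puts us into the ``unconditional'' second alternative of the hypotheses of both lemmas (no obstruction-theoretic input needed).

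For the existence and uniqueness statement, I would first invoke Lem.~\ref{lem: unramified on geometric l completion} on the given section $s$ to obtain the (unique) specialized section $\bar{s}_\ell$ of $\pi_1^{(\ell)}(Y/\mathbb{F})$. To promote this to a homotopy fixed point, I would apply Lem.~\ref{lem: sections vs ho rat points} to the profinite space $\mathcal{X} = \widehat{\rm Et}(Y/\mathbb{F})_{(\ell)}^\wedge$ over $BG_{\mathbb{F}}$. By \ref{para: geometric pro l completion of a group} the fundamental group sequence of this space is precisely $\pi_1^{(\ell)}(Y/\mathbb{F})$, and it surjects onto $G_{\mathbb{F}}$ by geometric connectedness of $Y$. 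Since $G_{\mathbb{F}} = \hat{\mathbb{Z}}$ has cohomological dimension $1$, the lemma delivers a canonical bijection between $\pi_1^\ell(Y\otimes_{\mathbb{F}}\mathbb{F}^{\rm s})$-conjugacy classes of sections of $\pi_1^{(\ell)}(Y/\mathbb{F})$ and homotopy fixed points of $(\widehat{\rm Et}(Y/\mathbb{F})\times_{BG_{\mathbb{F}}}EG_{\mathbb{F}})_\ell^\wedge$. Transporting $\bar{s}_\ell$ across this bijection gives the unique quasi-specialized homotopy fixed point.

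For the ``moreover'' part, the hypotheses give a section $s_G$ of $\pi_1(X_G/k^G)$ restricting to $s$. Apply Lem.~\ref{lem: unramified on geometric l completion} to $s_G$ to obtain a specialized section $\bar{s}_{G,\ell}$ of $\pi_1^{(\ell)}(Y_G/\mathbb{F}^G)$, where $\mathbb{F}^G$ is the residue field of $\mathfrak{o}^G$; by uniqueness, the restriction of $\bar{s}_{G,\ell}$ to the open subgroup $G_{\mathbb{F}}\leq G_{\mathbb{F}^G}$ coincides with $\bar{s}_\ell$. I would then feed this into Lem.~\ref{lem: sections vs ho rat points equivariant} applied to the epimorphism $G_{\mathbb{F}^G}\twoheadrightarrow G$ (with kernel $G_{\mathbb{F}}=\hat{\mathbb{Z}}$) and the profinite space $\widehat{\rm Et}(Y_G/\mathbb{F}^G)_{(\ell)}^\wedge$ over $BG_{\mathbb{F}^G}$, using the base change equivalence of \ref{para: profinite ho type} to identify the restriction of this space along $G_{\mathbb{F}^G}\twoheadrightarrow G$ with $(\widehat{\rm Et}(Y/\mathbb{F})\times_{BG_{\mathbb{F}}}EG_{\mathbb{F}})_\ell^\wedge$. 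Again the second alternative $G^\prime=\hat{\mathbb{Z}}$ is met, and the lemma directly yields that $\bar{s}_\ell$ lies in the $G$-invariants.

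The only real work is bookkeeping: one must verify that the geometric pro-$\ell$ completion commutes with the constructions appearing in the two auxiliary lemmas, so that Lem.~\ref{lem: sections vs ho rat points} and Lem.~\ref{lem: sections vs ho rat points equivariant} can legitimately be applied to the completed space $\widehat{\rm Et}(Y/\mathbb{F})_{(\ell)}^\wedge$ with $\pi_1^{(\ell)}(Y/\mathbb{F})$ replacing the raw fundamental group sequence. This is precisely the content of \ref{para: geometric pro l completion of a group}, so no extra argument is required. The mild subtlety that needs a pointer, rather than real work, is that uniqueness of specialization in Lem.~\ref{lem: unramified on geometric l completion} is what forces the compatibility $\bar{s}_{G,\ell}\vert_{G_{\mathbb{F}}} = \bar{s}_\ell$ used in the equivariant step.
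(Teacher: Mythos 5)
Your proposal is correct and follows essentially the same route as the paper: the paper's (very terse) proof likewise combines Lem.~\ref{lem: unramified on geometric l completion} with Lem.~\ref{lem: sections vs ho rat points} via the observation $G_{\mathbb{F}}=\hat{\mathbb{Z}}$ for the first claim, and invokes Lem.~\ref{lem: sections vs ho rat points equivariant} for the $G$-invariance; the identification $\pi_1(\widehat{\rm Et}(Y/\mathbb{F})_{(\ell)}^\wedge)=\pi_1^{(\ell)}(Y)$ from \ref{para: geometric pro l completion of a group} that you flag as the bookkeeping step is exactly the implicit ingredient. Your spelled-out version, including the uniqueness-forced compatibility $\bar{s}_{G,\ell}\vert_{G_{\mathbb{F}}}=\bar{s}_\ell$ in the equivariant step, is a faithful expansion of the paper's argument.
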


\begin{proof}
The Galois group $G_{\mathbb{F}}$ is $\hat{\mathbb{Z}}$, so the first claim follows using Lem.~\ref{lem: sections vs ho rat points}.
For the $G$-equivariance claim, use Lem.~\ref{lem: sections vs ho rat points equivariant}.
\end{proof}

In case the canonical map $(\widehat{\rm Et}(Y/\mathbb{F})\times_{BG_{\mathbb{F}}}EG_{\mathbb{F}})_\ell^\wedge \rightarrow B\Pi((\widehat{\rm Et}(Y/\mathbb{F})\times_{BG_{\mathbb{F}}}EG_{\mathbb{F}})_\ell^\wedge)$ admits a section in $\mathcal{H}(\hat{\mathcal{S}}_{G_{\mathbb{F}}})$, we get more explicitly:

\begin{cor}\label{cor: splitting of the fundamental groupoid}
Suppose the canonical map in $\mathcal{H}(\hat{\mathcal{S}}_{G_{\mathbb{F}}})$ to the fundamental groupoid admits a section $t\colon B\Pi((\widehat{\rm Et}(Y/\mathbb{F})\times_{BG_{\mathbb{F}}}EG_{\mathbb{F}})_\ell^\wedge) \rightarrow (\widehat{\rm Et}(Y/\mathbb{F})\times_{BG_{\mathbb{F}}}EG_{\mathbb{F}})_\ell^\wedge$.
Let $s$ be a section of $\pi_1(X/k)$.
Then the quasi-specialized homotopy fixed point $\bar{s}_\ell$ of the pro-$\ell$ completion $(\widehat{\rm Et}(Y/\mathbb{F})\times_{BG_{\mathbb{F}}}EG_{\mathbb{F}})_{\ell}^\wedge$ given by Cor.~\ref{cor: unramified on geometric l completion and ho rat pts} is given as the composition of the specialized section of Lem.~\ref{lem: unramified on geometric l completion} (seen as a homotopy fixed point of the fundamental groupoid) with $t$.
\end{cor}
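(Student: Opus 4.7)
The plan is to leverage the uniqueness statement of Corollary \ref{cor: unramified on geometric l completion and ho rat pts}: since that corollary exhibits the quasi-specialized homotopy fixed point $\bar{s}_\ell$ as the unique element of $[EG_{\mathbb{F}},(\widehat{\rm Et}(Y/\mathbb{F})\times_{BG_{\mathbb{F}}}EG_{\mathbb{F}})_\ell^\wedge]_{\hat{\mathcal{S}}_{G_{\mathbb{F}}}}$ that specializes $s$ on fundamental groups, it suffices to exhibit $t\circ B(\bar{s}_\ell^{\rm grp})$ as a quasi-specialization, where $\bar{s}_\ell^{\rm grp}\colon G_{\mathbb{F}}\to \pi_1^{(\ell)}(Y)$ is the specialized section produced by Lem.~\ref{lem: unramified on geometric l completion}.

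First I would make the identifications precise. By \ref{para: geometric pro l completion of a group} applied to $\pi_1^{(\ell)}(Y/\mathbb{F})$, the canonical map in $\mathcal{H}(\hat{\mathcal{S}}_{G_{\mathbb{F}}})$ identifies $B\Pi((\widehat{\rm Et}(Y/\mathbb{F})\times_{BG_{\mathbb{F}}}EG_{\mathbb{F}})_\ell^\wedge)$ with $B\pi_1^{(\ell)}(Y)\times_{BG_{\mathbb{F}}}EG_{\mathbb{F}}$, and \ref{para: sect and vs ho rat points of classifying spaces} turns the section $\bar{s}_\ell^{\rm grp}$ into a homotopy fixed point $\beta\colon EG_{\mathbb{F}}\to B\Pi((\widehat{\rm Et}(Y/\mathbb{F})\times_{BG_{\mathbb{F}}}EG_{\mathbb{F}})_\ell^\wedge)$ in $\mathcal{H}(\hat{\mathcal{S}}_{G_{\mathbb{F}}})$. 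Composing with the given section $t$ yields a homotopy fixed point $t\circ\beta$ of the pro-$\ell$ completion.

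Next I would verify that $t\circ\beta$ is a quasi-specialization of $s$. Applying the fundamental group functor to the defining identity (canonical map)$\circ\, t = \id$, one obtains $\pi_1(t)=\id$ under the canonical identification $\pi_1(B\Pi(\mathcal{X}))\cong \pi_1(\mathcal{X})$. Hence the section of $\pi_1^{(\ell)}(Y/\mathbb{F})$ associated to $t\circ\beta$ via \ref{para: sections vs ho rat points} agrees with the section associated to $\beta$ alone, which is $\bar{s}_\ell^{\rm grp}$ by construction. Since $\bar{s}_\ell^{\rm grp}$ is the specialization of $s$ by Lem.~\ref{lem: unramified on geometric l completion}, this exhibits $t\circ\beta$ as a quasi-specialization of $s$ in the sense of \ref{para: quasi-specialized ho rat points}. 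Finally, by the uniqueness asserted in Cor.~\ref{cor: unramified on geometric l completion and ho rat pts} (which invokes Lem.~\ref{lem: sections vs ho rat points} with $G=\hat{\mathbb{Z}}$), we conclude $\bar{s}_\ell = t\circ\beta$.

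The only potentially delicate point is the compatibility of the fundamental group functor with the equivalence between $\mathcal{H}(\hat{\mathcal{S}}_{/BG_{\mathbb{F}}})$ and $\mathcal{H}(\hat{\mathcal{S}}_{G_{\mathbb{F}}})$ and with the model $B\pi_1^{(\ell)}(Y)\times_{BG_{\mathbb{F}}}EG_{\mathbb{F}}$ of the fundamental groupoid over $BG_{\mathbb{F}}$; this is however purely formal, since the Borel construction preserves $\pi_1$ and the identification $\pi_1^{(\ell)}(Y)=\pi_1((\widehat{\rm Et}(Y/\mathbb{F})\times_{BG_{\mathbb{F}}}EG_{\mathbb{F}})_\ell^\wedge)$ was already established in \ref{para: geometric pro l completion of fundamental groups}. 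No further technology beyond what has been developed in Section~2 is required.
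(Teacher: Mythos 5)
Your argument is correct and follows essentially the same route as the paper: the paper's proof simply refers back to the proof of Lem.~\ref{lem: sections vs ho rat points}, where the canonical map to $B\Pi(-)$ is shown to induce a bijection on homotopy fixed points with inverse given by composition with $t$, which is exactly the uniqueness statement you invoke from Cor.~\ref{cor: unramified on geometric l completion and ho rat pts} combined with your observation that $t\circ\beta$ induces the specialized section on fundamental groups.
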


\begin{proof}
See the proof of Lem.~\ref{lem: sections vs ho rat points}.
\end{proof}

\begin{sect}\label{para: nice section}
Under a suitable rationality condition on $Y$, there is a particularly nice section of the canonical map $(\widehat{\rm Et}(Y/\mathbb{F})\times_{BG_{\mathbb{F}}}EG_{\mathbb{F}})_\ell^\wedge \rightarrow B\Pi((\widehat{\rm Et}(Y/\mathbb{F})\times_{BG_{\mathbb{F}}}EG_{\mathbb{F}})_\ell^\wedge)$ in $\mathcal{H}(\hat{\mathcal{S}}_{G_{\mathbb{F}}})$:
As in the proof of Lem.~\ref{lem: homotopy type of a curve}, let $\tilde{Y} = \coprod_i\tilde{Y}_i \rightarrow Y$ be the normalization with connected components $\tilde{Y}_i$.
Let $\mathcal{R}$ be the set of indices $i$ with $\tilde{Y}_i$ a rational component of the normalization $\tilde{Y} \rightarrow Y$.
Since finite fields have trivial Brauer groups, there is a finite extension $\mathbb{F}_i/\mathbb{F}$ such that $\tilde{Y}_i \cong \mathbb{P}_{\mathbb{F}_i}^1$ for $i\in \mathcal{R}$.
Assume each rational component $\tilde{Y}_i$ admits an $\mathbb{F}_i$-rational point $\infty_i \in \tilde{Y}_i(\mathbb{F}_i)$ lying over a smooth point of $Y$.
Then $\tilde{Y}_i \setminus \{ \infty_i \} \cong \mathbb{A}_{\mathbb{F}_i}^1$ and $(\tilde{Y}_i \setminus \{ \infty_i \}) \otimes_{\mathbb{F}}\mathbb{F}^{\rm s}$ is isomorphic to a disjoint union of affine lines.
In particular, each connected component of $(\tilde{Y}_i \setminus \{ \infty_i \}) \otimes_{\mathbb{F}}\mathbb{F}^{\rm s}$ has contractible pro-$\ell$ completion.
Set 
\begin{equation*}
 j\colon Y^\circ :=
 \xymatrix{
  Y \setminus \{ \infty_i ~\vert~ i\in \mathcal{R} \} \ar@{^(->}[r] &
  Y
 }.
\end{equation*}
By Lem.~\ref{lem: homotopy type of a curve}, $Y^\circ$ is a $K(\pi,1)$ with an $\ell$-good geometric fundamental group.
Further, $j$ induces an isomorphism between the geometrically completed fundamental groups $\pi_1^{(\ell)}(Y^\circ)$ and $\pi_1^{(\ell)}(Y)$ (use \cite[Cor.~5.3]{Stix06}).
Since $\pi_1(Y^\circ \otimes_{\mathbb{F}}\mathbb{F}^{\rm s})$ is $\ell$-good, $j$ induces the desired splitting
\begin{equation*}
 t_j\colon
 \xymatrix{
  (\widehat{\rm Et}(Y^\circ)\times_{BG_{\mathbb{F}}}EG_{\mathbb{F}})_\ell^\wedge \ar[r] &
  (\widehat{\rm Et}(Y)\times_{BG_{\mathbb{F}}}EG_{\mathbb{F}})_\ell^\wedge
 }
\end{equation*}
of the canonical map $(\widehat{\rm Et}(Y)\times_{BG_{\mathbb{F}}}EG_{\mathbb{F}})_\ell^\wedge \rightarrow B\Pi((\widehat{\rm Et}(Y)\times_{BG_{\mathbb{F}}}EG_{\mathbb{F}})_\ell^\wedge)$ in $\mathcal{H}(\hat{\mathcal{S}}_{G_{\mathbb{F}}})$.
Note that our rationality condition on $Y$ is always satisfied after a base extension along a sufficiently large unramified $p$-extension $k^\prime/k$.
\end{sect}

\begin{rem}\label{rem: pullback map without homotopy}
Again, suppose each rational component $Y_i$ of the reduced special fibre $Y/\mathbb{F}$ contains a smooth $\mathbb{F}_i$-rational point with $Y_i$ birational to $\tilde{Y}_i \cong \mathbb{P}_{\mathbb{F}_i}^1$.
Let $r$ be a section of $\pi_1^{(\ell)}(Y/\mathbb{F})$.
Further, let $j\colon Y^\circ \hookrightarrow Y$ be the open embedding constructed in \ref{para: nice section} and let $f_\sigma$ and $f_\sigma^\circ$ be the structural morphisms of $Y/\mathbb{F}$ and $Y^\circ/\mathbb{F}$.
Since $Y^\circ$ is a $K(\pi,1)$ with geometrically $\ell$-good fundamental group $\pi_1(Y)$, its \'{e}tale cohomology is given by the cohomology of the profinite group $\pi_1(Y)$.
In particular, purely in terms of \'{e}tale cohomology and cohomology of profinite groups, $r$ induces pullback maps $\mathbb{Z}/\ell^n \rightarrow \mathbb{R}f_{\sigma,\ast}^\circ\mathbb{Z}/\ell^n$ and $r^\ast\colon \mathbb{Z}/\ell^n \rightarrow \mathbb{R}f_{\sigma,\ast}\mathbb{Z}/\ell^n$ in $\mathcal{D}^+(\underline{\rm Mod}_{G_{\mathbb{F}}})$.
\textit{A posteriori}, by Cor.~\ref{cor: unramified on geometric l completion and ho rat pts}, $r^\ast$ is independent from the choice of the smooth rational points $\infty_i\in \tilde{Y}_i(\mathbb{F})$ for $i\in \mathcal{R}$.
In particular, under our rationality conditions on $Y/\mathbb{F}$, we get a definition of a canonical cycle class ${\rm cl}_{r} \in {\rm H}^2(\mathfrak{X},\boldsymbol{\mu}_n)$ of $r$, purely in terms of \'{e}tale cohomology and cohomology of profinite groups.
\end{rem}

\subsection*{Admissible pullback maps.}
By Cor.~\ref{cor: unramified on geometric l completion and ho rat pts}, for any section $s$ of $\pi_1(X/k)$ there a unique quasi-specialized homotopy fixed point $\bar{s}_\ell$ of $(\widehat{\rm Et}(Y/\mathbb{F})\times_{BG_{\mathbb{F}}}EG_{\mathbb{F}})_{\ell}^\wedge$.
We need to show that the induced pullback map is admissible in the sense of \ref{para: admissible pull back map}.

\begin{sect}\label{para: not quite ramification map}
Define the map $r_s^{(\ell)}\colon BG_k\times_{BG_{\mathbb{F}}}EG_{\mathbb{F}} \rightarrow (\widehat{\rm Et}(Y/\mathbb{F})\times_{BG_{\mathbb{F}}}EG_{\mathbb{F}})_\ell^\wedge$ as the composition
\begin{equation*}
 r_s^{(\ell)}\colon
 \xymatrix{
  BG_k\times_{BG_{\mathbb{F}}}EG_{\mathbb{F}} \ar[r]^-s \ar@/_1pc/[rr] &
  \widehat{\rm Et}(X/k)\times_{BG_{\mathbb{F}}}EG_{\mathbb{F}} \ar[r]^-{\rm sp} &
  (\widehat{\rm Et}(Y/\mathbb{F})\times_{BG_{\mathbb{F}}}EG_{\mathbb{F}})_\ell^\wedge
 }.
\end{equation*}
Although on fundamental groups, it induces the geometrically pro-$\ell$ completed ramification map ${\rm ram}_s$ (cf.~\ref{para: quasi-specialized ho rat points} -- in particular, it is trivial on fundamental groups by \cite[Prop.~91]{Stix12}), $r_s^{(\ell)}$ should not be mistaken as an analogue of ${\rm ram}_s$ for homotopy types. 
\end{sect}

\begin{sect}\label{para: splitting of the fundamental groupoid vs not quite ramification map}
Suppose $t\colon B\Pi((\widehat{\rm Et}(Y/\mathbb{F})\times_{BG_{\mathbb{F}}}EG_{\mathbb{F}})_\ell^\wedge) \rightarrow (\widehat{\rm Et}(Y/\mathbb{F})\times_{BG_{\mathbb{F}}}EG_{\mathbb{F}})_\ell^\wedge$ is a splitting in $\mathcal{H}(\hat{\mathcal{S}}_{G_{\mathbb{F}}})$ of the canonical map with the following additional property:
Assume that on cohomology cochains with coefficients $\mathbb{Z}/{\ell^n}$, $r_s^{(\ell)}$ factors through $t$, i.e., assume that there is a map $\varphi$, making the triangle
\begin{equation}\label{eq: splitting of the fundamental groupoid vs not quite ramification map}
 \xymatrix{
  C^\bullet(\widehat{\rm Et}(Y/\mathbb{F})\times_{BG_{\mathbb{F}}}EG_{\mathbb{F}},\mathbb{Z}/{\ell^n}) \ar[rr]^-{r_s^{(\ell),\ast}} \ar[d]^-{t^\ast} &&
  C^\bullet(BG_k\times_{BG_{\mathbb{F}}}EG_{\mathbb{F}},\mathbb{Z}/{\ell^n})
 \\
  C^\bullet(B\Pi(\widehat{\rm Et}(Y/\mathbb{F})\times_{BG_{\mathbb{F}}}EG_{\mathbb{F}}),\mathbb{Z}/{\ell^n}) \ar[urr]_-{\exists \varphi} &&
 }
\end{equation}
commutative in $\mathcal{D}^+(\underline{\rm Mod}_{G_{\mathbb{F}}})$.
As $t$ is a splitting 
and $\bar{s}_\ell$ is induced via $t$ by a specialization of our original section $s$ (cf.~Cor.~\ref{cor: splitting of the fundamental groupoid}), the triangle
\begin{equation*}
 \xymatrix{
  C^\bullet(B\Pi(\widehat{\rm Et}(Y/\mathbb{F})\times_{BG_{\mathbb{F}}}EG_{\mathbb{F}}),\mathbb{Z}/{\ell^n}) \ar[rr]^-\varphi \ar[d]^-{\bar{s}_\ell^\ast} &&
  C^\bullet(BG_k\times_{BG_{\mathbb{F}}}EG_{\mathbb{F}},\mathbb{Z}/{\ell^n})
 \\
 \mathbb{Z}/{\ell^n} \simeq C^\bullet(EG_{\mathbb{F}},\mathbb{Z}/{\ell^n}) \ar[urr]_-{\rm can} &&
 }
\end{equation*}
commutes in $\mathcal{D}^+(\underline{\rm Mod}_{G_{\mathbb{F}}})$, as well.
Piecing together these two commutative triangles and unravelling the definitions, we end up with the commuting square (\ref{eq: specialization in cohomological settings}).
In particular, the pullback map $\bar{s}_\ell^\ast$ is admissible.
\end{sect}

%

We want to use \ref{para: splitting of the fundamental groupoid vs not quite ramification map} to show that $\bar{s}_\ell$ has an admissible pullback map.
As a first step, let us treat models $\mathfrak{X}/\mathfrak{o}$ whose special fibre satisfies a slightly stronger rationality assumption then the one in \ref{para: nice section}:

\begin{prop}\label{prop: specialized homotopy rational point pro l}
Let $X/k$ be a geometrically connected smooth projective curve of positive genus over a $p$-adic field $k$ and $\mathfrak{X}/\mathfrak{o}$ a regular proper flat model.
Suppose each rational component of the reduced special fibre $Y/\mathbb{F}$ contains a smooth $\mathbb{F}$-rational point.
Then for any section $s$ of $\pi_1(X/k)$ there is a unique quasi-specialized homotopy fixed point $\bar{s}_\ell$ of $(\widehat{\rm Et}(Y/\mathbb{F})\times_{BG_{\mathbb{F}}}EG_{\mathbb{F}})_{\ell}^\wedge$ in $\mathcal{H}(\hat{\mathcal{S}}_{/ BG_{\mathbb{F}}})$ inducing the commutative diagram 
\begin{equation}\label{eq: specialized homotopy rational point pro l}
\xymatrix{
  C^\bullet(\widehat{\rm Et}(Y/\mathbb{F})\times_{BG_{\mathbb{F}}}EG_{\mathbb{F}},\Lambda) \ar[r]^-{\bar{s}_\ell^\ast} \ar[d]^-{{\rm sp}^\ast} &
  C^\bullet(EG_{\mathbb{F}},\Lambda)\simeq \Lambda \ar[d]^-{\rm can}
 \\
  C^\bullet(\widehat{\rm Et}(X/k)\times_{BG_{\mathbb{F}}}EG_{\mathbb{F}},\Lambda) \ar[r]^-{s^\ast} &
  C^\bullet(BG_k\times_{BG_{\mathbb{F}}}EG_{\mathbb{F}},\Lambda)
 }
\end{equation}
in the derived category $\mathcal{D}^+(\underline{\rm Mod}_{G_{\mathbb{F}}})$ for $\Lambda$ any continuous finite $\ell$-torsion $G_{\mathbb{F}}$-module.
\end{prop}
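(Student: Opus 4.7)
Existence and uniqueness of $\bar{s}_\ell$ are already furnished by Corollary \ref{cor: unramified on geometric l completion and ho rat pts}; the substantive content is the commutativity of diagram (\ref{eq: specialized homotopy rational point pro l}) in $\mathcal{D}^+(\underline{\rm Mod}_{G_\mathbb{F}})$. My strategy is to reduce this, via the criterion of paragraph \ref{para: splitting of the fundamental groupoid vs not quite ramification map}, to a cochain-level factorization involving the splitting $t_j$ from paragraph \ref{para: nice section}.

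Under the rationality hypothesis, paragraph \ref{para: nice section} applies directly to the smooth $\mathbb{F}$-rational points $\infty_i \in Y(\mathbb{F})$, producing the open embedding $j\colon Y^\circ = Y \setminus \{\infty_i\} \hookrightarrow Y$ (with $Y^\circ$ a $K(\pi,1)$ whose geometric fundamental group is $\ell$-good and for which $\pi_1^{(\ell)}(j)$ is an isomorphism) together with the induced splitting $t_j$ of the canonical map $(\widehat{\rm Et}(Y/\mathbb{F}) \times_{BG_\mathbb{F}} EG_\mathbb{F})_\ell^\wedge \to B\Pi((\widehat{\rm Et}(Y/\mathbb{F}) \times_{BG_\mathbb{F}} EG_\mathbb{F})_\ell^\wedge)$ in $\mathcal{H}(\hat{\mathcal{S}}_{G_\mathbb{F}})$. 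By Corollary \ref{cor: splitting of the fundamental groupoid}, the unique quasi-specialization $\bar{s}_\ell$ is realized as $t_j \circ \psi$, where $\psi\colon BG_\mathbb{F} \to B\pi_1^{(\ell)}(Y)$ is the map of classifying spaces corresponding to the group-theoretic specialization $\bar{s}_\ell^{B\Pi}$ from Lemma \ref{lem: unramified on geometric l completion}.

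The commutativity of (\ref{eq: specialized homotopy rational point pro l}) then reduces, via paragraph \ref{para: splitting of the fundamental groupoid vs not quite ramification map} applied with $t = t_j$, to producing a morphism $\varphi$ making the triangle (\ref{eq: splitting of the fundamental groupoid vs not quite ramification map}) commute in $\mathcal{D}^+(\underline{\rm Mod}_{G_\mathbb{F}})$. The natural candidate is $\varphi = \psi^\ast$, so the task becomes verifying the cochain-level identity $r_s^{(\ell),\ast} = \psi^\ast \circ t_j^\ast$. A key observation is that on fundamental groups both $r_s^{(\ell)}$ and $t_j \circ \psi$ induce the same homomorphism $\bar{s}_\ell^{B\Pi} \circ \kappa\colon G_k \to \pi_1^{(\ell)}(Y)$, by Lemma \ref{lem: unramified on geometric l completion} and by construction of $\psi$.

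The main obstacle is that this $\pi_1$-level agreement does not immediately propagate to cochain-level equality, because $(\widehat{\rm Et}(Y/\mathbb{F}) \times_{BG_\mathbb{F}} EG_\mathbb{F})_\ell^\wedge$ is generally not a $K(\pi,1)$-space, so maps from $BG_k$ into it carry extra homotopical information beyond $\pi_1$. I would address this by exploiting the geometric origin of $t_j$: Hensel's lemma lifts each $\infty_i$ to a horizontal closed immersion $D_i \hookrightarrow \mathfrak{X}$ over $\mathfrak{o}$ (using that $\mathfrak{X}$ is regular and $\infty_i$ is a smooth point of $Y$), so that $\mathfrak{X}^\circ := \mathfrak{X} \setminus \bigcup D_i$ is an open subscheme with special fibre $Y^\circ$ and generic fibre $X^\circ = X \setminus \{p_i\} \subset X$. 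The resulting commutative square of schemes induces a compatible square of pro-$\ell$ completed homotopy types through which the composition $\mathrm{sp} \circ s$ can be tracked, reducing the verification of the cochain-level identity to a study of the Gysin triangle associated to $j$ and the cohomological decomposition in Lemma \ref{lem:etale vs finite etale cohomology}; alternatively, the obstruction to lifting the $\pi_1$-agreement may be killed directly via the obstruction theory sketched in Remark \ref{rem: obstruction theory}. Combined with paragraph \ref{para: splitting of the fundamental groupoid vs not quite ramification map}, this yields the required identity $r_s^{(\ell),\ast} = \psi^\ast \circ t_j^\ast$ and hence the commutativity of (\ref{eq: specialized homotopy rational point pro l}).
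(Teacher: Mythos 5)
Your reduction is set up correctly and matches the paper's strategy up to the decisive point: you invoke Cor.~\ref{cor: unramified on geometric l completion and ho rat pts} for existence and uniqueness, pass through \ref{para: splitting of the fundamental groupoid vs not quite ramification map} with $t=t_j$ from \ref{para: nice section}, and correctly identify that everything hinges on showing $r_s^{(\ell),\ast}$ factors through $t_j^\ast$, equivalently (via the Gysin triangle for $j\colon Y^\circ\hookrightarrow Y$) that $r_s^{(\ell),\ast}$ kills each summand $\mathbb{Z}/\ell^n(-1)[-2]$. But at exactly this point your argument stops being a proof. The obstruction to the factorization is a class in $[\mathbb{Z}/\ell^n(-1)[-2],\,\sigma^\ast\mathbb{R}\eta_\ast\mathbb{Z}/\ell^n]\cong {\rm H}^2(G_k,\boldsymbol{\mu}_{\ell^n})\cong \tfrac{1}{\ell^n}\mathbb{Z}/\mathbb{Z}$, which is \emph{not} zero for a $p$-adic field, so it cannot be killed by any general homotopy-theoretic device; in particular the obstruction theory of Rem.~\ref{rem: obstruction theory} is about existence of homotopy fixed points over a group of cohomological dimension $\le 1$ and has no bearing on the vanishing of this class. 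The paper's proof identifies the $i^{\rm th}$ obstruction explicitly: the map $\mathbb{Z}/\ell^n(-1)[-2]\to\mathbb{R}f_{\sigma,\ast}\mathbb{Z}/\ell^n$ is $\hat{c}_1[\mathcal{O}_Y(\infty_i)]$, its image under $r_s^{(\ell),\ast}$ is $s^\ast\hat{c}_1[\mathcal{L}]$ for $\mathcal{L}$ the generic fibre of a line bundle on $\mathfrak{X}$ lifting $\mathcal{O}(\infty_i)$ (via ${\rm Pic}(\mathfrak{X})\twoheadrightarrow{\rm Pic}(Y)$, SGA~4 Exp.~XIII), and then $s^\ast\hat{c}_1[\mathcal{L}]={\rm cl}_s\cup\hat{c}_1[\mathcal{L}]$ by the duality characterization of ${\rm cl}_s$, which vanishes by Tate--Lichtenbaum duality together with the Esnault--Wittenberg $\ell$-adic algebraicity of ${\rm cl}_s$. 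This arithmetic input is the heart of the proposition and is entirely absent from your proposal.

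A secondary inaccuracy: your appeal to Hensel's lemma to lift $\infty_i$ to a horizontal divisor $D_i\subset\mathfrak{X}$ over $\mathfrak{o}$ need not work, because $Y$ is the \emph{reduced} special fibre and the component through $\infty_i$ may occur with multiplicity $>1$ in $\mathfrak{X}\otimes_{\mathfrak{o}}\mathbb{F}$; a smooth point of $Y$ then does not lift to an $\mathfrak{o}$-section. This is precisely why the paper lifts the line bundle $\mathcal{O}_Y(\infty_i)$ rather than the point itself.
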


\begin{proof}
Let us first reformulate: We claim that the square
\begin{equation*}
 \xymatrix{
  \mathbb{R}f_{\sigma,\ast}\Lambda \ar[rr]^-{\bar{s}_\ell^\ast} \ar[d]^-{{\rm can}^{-1}} &&
  \Lambda \ar[dd]^-{\sigma^\ast({\rm can})}
 \\
  \sigma^\ast\mathbb{R}f_\ast\Lambda \ar[d]^-{\sigma^\ast({\rm can})} &&
 \\
  \sigma^\ast\mathbb{R}\eta_\ast\mathbb{R}f_{\eta,\ast}\Lambda \ar[rr]^-{\sigma^\ast\mathbb{R}\eta_\ast(s^\ast)} &&
  \sigma^\ast\mathbb{R}\eta_\ast\Lambda
 }
\end{equation*}
is commutative in $\mathcal{D}^+(\underline{\rm Mod}_{G_{\mathbb{F}}})$ (cf.~\ref{para: definition of the cycle class, reformulation II}).
Using the projection formula, it suffices to treat the case $\Lambda = \mathbb{Z}/\ell^n$.
We want to use the arguments in \ref{para: splitting of the fundamental groupoid vs not quite ramification map}.
The map $r_s^{(\ell),\ast}$ corresponds to the composition $\mathbb{R}f_{\sigma,\ast}\mathbb{Z}/\ell^n \rightarrow \sigma^\ast\mathbb{R}\eta_\ast\mathbb{Z}/\ell^n$ around the lower left vertex of the square.
For the splitting $t\colon B\Pi((\widehat{\rm Et}(Y/\mathbb{F})\times_{BG_{\mathbb{F}}}EG_{\mathbb{F}})_\ell^\wedge) \rightarrow (\widehat{\rm Et}(Y/\mathbb{F})\times_{BG_{\mathbb{F}}}EG_{\mathbb{F}})_\ell^\wedge$ we take the map $t_j$ induced by the open subscheme $j\colon Y^\circ = Y \setminus \{ \infty_i ~\vert~ i\in \mathcal{R} \} \hookrightarrow Y$, for $\mathcal{R}$ the set of indices $i$ with $\tilde{Y}_i$ a rational component of the normalization $\tilde{Y} \rightarrow Y$ and $\infty_i$ an $\mathbb{F}$-rational point in $\tilde{Y}_i$ dominating a smooth point of $Y$ (cf.~\ref{para: nice section}).
Let $f_\sigma^\circ$ be the structural morphism of $Y^\circ/\mathbb{F}$.
By the discussion in \ref{para: splitting of the fundamental groupoid vs not quite ramification map} and our translation, it remains to show that the composition around the lower left vertex of the square $r_s^{(\ell),\ast}$ factors in $\mathcal{D}^+(\underline{\rm Mod}_{G_{\mathbb{F}}})$ through the canonical map $\mathbb{R}f_{\sigma,\ast}\mathbb{Z}/\ell^n \rightarrow \mathbb{R}f_{\sigma,\ast}^\circ\mathbb{Z}/\ell^n$.
\\
Let $i\colon Z \hookrightarrow Y$ be the reduced complement of $Y^\circ$ in $Y$.
Then $Z \cong {\rm Spec}(\mathbb{F})\otimes \mathcal{R}$ (the coproduct of $\mathcal{R}$-many copies of ${\rm Spec}(\mathbb{F})$) is regular and $i$ factors through the regular locus $j_{\rm reg}\colon Y_{\rm reg}\hookrightarrow Y$, say via $i_{\rm reg}\colon Z \hookrightarrow Y_{\rm reg}$.
From relative purity we get
\begin{equation*}
 \mathbb{R}i^!\mathbb{Z}/\ell^n = \mathbb{R}i_{\rm reg}^!j_{\rm reg}^\ast\mathbb{Z}/\ell^n = \mathbb{Z}/\ell^n(-1)[-2],
\end{equation*}
i.e., we get the exact Gysin triangle in $\mathcal{D}^+(\underline{\rm Mod}_{G_{\mathbb{F}}})$
\begin{equation*}
 \xymatrix{
  \bigoplus_{i\in\mathcal{R}}\mathbb{Z}/\ell^n(-1)[-2] \ar[r] &
  \mathbb{R}f_{\sigma,\ast} \mathbb{Z}/\ell^n \ar[r] &
  \mathbb{R}f_{\sigma,\ast}^\circ\mathbb{Z}/\ell^n \ar[r]^-{+1} &
  \bigoplus_{\mathcal{R}}\mathbb{Z}/\ell^n(-1)[-1]
 }
\end{equation*}
In particular, $r_s^{(\ell),\ast}$ factors through $\mathbb{R}f_{\sigma,\ast}\mathbb{Z}/\ell^n \rightarrow \mathbb{R}f_{\sigma,\ast}^\circ\mathbb{Z}/\ell^n$ if and only if $r_s^{(\ell),\ast}$ restricts to the trivial map $\mathbb{Z}/\ell^n(-1)[-2] \rightarrow \sigma^\ast\mathbb{R}\eta_\ast \mathbb{Z}/\ell^n$ for each $i\in \mathcal{R}$.
\\
To see this, first note that the $i^{\rm th}$ map $\mathbb{Z}/\ell^n(-1)[-2] \rightarrow \mathbb{R}f_{\sigma,\ast}\mathbb{Z}/\ell^n$ corresponds to the first Chern class
\begin{equation*}
 \hat{c}_1[\mathcal{O}_Y(\infty_i)] \in {\rm H}^2(Y,\boldsymbol{\mu}_{\ell^n}) = [\mathbb{Z}/\ell^n(-1)[-2], \mathbb{R}f_{\sigma,\ast}\mathbb{Z}/\ell^n]_{\underline{\rm Mod}_{\mathbb{G}_{\mathbb{F}}}}.
\end{equation*}
It follows, that the restriction to the $i^{\rm th}$ component $\mathbb{Z}/\ell^n(-1)[-2] \rightarrow \sigma^\ast\mathbb{R}\eta_\ast \mathbb{Z}/\ell^n \simeq \mathbb{R}\eta_{{\rm fet},\ast} \mathbb{Z}/\ell^n$ of the map $r_s^{(\ell),\ast}$ corresponds to the class
\begin{equation*}
 s^\ast\hat{c}_1[\mathcal{L}] \in {\rm H}^2(G_k,\boldsymbol{\mu}_{\ell^n}) = [\mathbb{Z}/\ell^n(-1)[-2], \mathbb{R}\eta_{{\rm fet},\ast}\mathbb{Z}/\ell^n]_{\underline{\rm Mod}_{\mathbb{G}_{\mathbb{F}}}},
\end{equation*}
where $\mathcal{L}$ is the generic fibre of a line bundle mapping to $\mathcal{O}(\infty_i)$ under the epimorphism of Picard groups ${\rm Pic}(\mathfrak{X}) \twoheadrightarrow {\rm Pic}(Y)$.
For the latter, see \cite[Exp.~XIII Prop.~3.2]{SGA4.3}.
From the characterization of ${\rm cl}_s$ via duality (see \cite[Sect.~6.1]{Stix12}) we get $s^\ast\hat{c}_1[\mathcal{L}] = {\rm cl}_s\cup \hat{c}_1[\mathcal{L}]$.
Finally, via Tate-Lichtenbaum duality $s^\ast\hat{c}_1[\mathcal{L}] = 0$ follows from the algebraicity of ${\rm cl}_s$ in ${\rm H}^2(X,\mathbb{Z}_\ell(1))$ (see \cite[Cor.~3.4 and Rem.~3.7 (ii)]{EsnaultWittenberg09}), which finishes the proof.
\end{proof}


With a little bit more work, we can generalize Prop.\ \ref{prop: specialized homotopy rational point pro l} to arbitrary regular proper flat models $\mathfrak{X}/\mathfrak{o}$:

\begin{thm}\label{thm: specialized homotopy rational point pro l}
Let $X/k$ be a geometrically connected smooth projective curve of positive genus over a $p$-adic field $k$ and $\mathfrak{X}/\mathfrak{o}$ a regular proper flat model.
Let $\bar{s}_\ell$ be the unique quasi-specialized homotopy fixed point of $(\widehat{\rm Et}(Y/\mathbb{F})\times_{BG_{\mathbb{F}}}EG_{\mathbb{F}})_{\ell}^\wedge$ in $\mathcal{H}(\hat{\mathcal{S}}_{/ BG_{\mathbb{F}}})$ given by a section $s$ of $\pi_1(X/k)$ (cf.~Cor.~\ref{cor: unramified on geometric l completion and ho rat pts}).
Then $\bar{s}_\ell$ has an admissible pullback map.
More generally, for any continuous finite $\ell$-torsion $G_{\mathbb{F}}$-module $\Lambda$, the induced diagram \eqref{eq: specialized homotopy rational point pro l} in the derived category $\mathcal{D}^+(\underline{\rm Mod}_{G_{\mathbb{F}}})$ commutes.
\end{thm}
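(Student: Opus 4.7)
The existence and uniqueness of $\bar{s}_\ell$ are supplied by Corollary~\ref{cor: unramified on geometric l completion and ho rat pts}, so the content of the theorem lies in the admissibility of $\bar{s}_\ell^\ast$ and in the commutativity of (\ref{eq: specialized homotopy rational point pro l}). The plan is to apply the recipe of \ref{para: splitting of the fundamental groupoid vs not quite ramification map}: construct, in $\mathcal{H}(\hat{\mathcal{S}}_{G_{\mathbb{F}}})$, a splitting $t$ of the canonical map to the fundamental groupoid through which $r_s^{(\ell),\ast}$ factors on cochains. Just as in the proof of Proposition~\ref{prop: specialized homotopy rational point pro l}, the natural candidate is the splitting $t_j$ from \ref{para: nice section}, induced by the open embedding $j\colon Y^\circ = Y\setminus\{\infty_i\}_{i\in\mathcal{R}}\hookrightarrow Y$ after choosing a smooth $\mathbb{F}_i$-rational point $\infty_i$ on each rational component $\tilde{Y}_i$ dominating a smooth point of $Y$. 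The proposition's argument will be adapted so as to allow $\mathbb{F}_i$-rational (rather than $\mathbb{F}$-rational) points $\infty_i$.

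The technical core is the Gysin triangle associated to the complement $i\colon Z = \coprod_{i\in\mathcal{R}}\Spec(\mathbb{F}_i)\hookrightarrow Y$. Relative purity, together with pushforward along $\Spec(\mathbb{F}_i)\to\Spec(\mathbb{F})$, yields a triangle in $\mathcal{D}^+(\underline{\rm Mod}_{G_{\mathbb{F}}})$ with left term $\bigoplus_{i\in\mathcal{R}}\mathrm{Ind}_{G_{\mathbb{F}_i}}^{G_{\mathbb{F}}}\mathbb{Z}/\ell^n(-1)[-2]$, middle term $\mathbb{R}f_{\sigma,\ast}\mathbb{Z}/\ell^n$, and right term $\mathbb{R}f_{\sigma,\ast}^\circ\mathbb{Z}/\ell^n$. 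Hence $r_s^{(\ell),\ast}$ factors through the map $\mathbb{R}f_{\sigma,\ast}\mathbb{Z}/\ell^n\to\mathbb{R}f_{\sigma,\ast}^\circ\mathbb{Z}/\ell^n$ if and only if, for every $i\in\mathcal{R}$, the composition from the $i$-th summand to $\sigma^\ast\mathbb{R}\eta_\ast\mathbb{Z}/\ell^n$ vanishes. Frobenius reciprocity translates this into the vanishing of a class in $\mathrm{H}^2(G_{k_i},\boldsymbol{\mu}_{\ell^n})$, where $k_i/k$ is the unramified extension of $k$ with residue field $\mathbb{F}_i$. Exactly as in the proposition, this class equals $\cl_{s|_{G_{k_i}}}\cup\hat{c}_1[\mathcal{L}_i]$ for $\mathcal{L}_i$ the generic fibre of a line bundle on $\mathfrak{X}\otimes_{\mathfrak{o}}\mathfrak{o}_i$ lifting $\mathcal{O}(\infty_i)$, and it vanishes by Tate-Lichtenbaum duality together with the algebraicity of $\cl_s$, whose restriction to $G_{k_i}$ remains algebraic by functoriality. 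The passage from $\mathbb{Z}/\ell^n$ to an arbitrary finite $\ell$-torsion coefficient module $\Lambda$ is the same projection-formula reduction used in the proposition.

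The main obstacle is that the smooth $\mathbb{F}_i$-rational points $\infty_i$ needed to build $t_j$ may fail to exist when $|\mathbb{F}_i|$ is too small, in which case the argument above cannot be executed directly over $k/\mathbb{F}$. By the observation at the end of \ref{para: nice section}, a sufficiently large unramified $p$-extension $k'/k$ forces their existence, and crucially, $[k':k]$ is then a $p$-power, hence coprime to $\ell$. Applying the argument above to the base-changed pair $(\mathfrak{X}\otimes_{\mathfrak{o}}\mathfrak{o}', s|_{G_{k'}})$ yields commutativity of the analogue of~(\ref{eq: specialized homotopy rational point pro l}) in $\mathcal{D}^+(\underline{\rm Mod}_{G_{\mathbb{F}'}})$. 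By Corollary~\ref{cor: unramified on geometric l completion and ho rat pts}, the quasi-specialized $\bar{s}_\ell$ and the pullback maps in~(\ref{eq: specialized homotopy rational point pro l}) are $G=\Gal(k'/k)$-equivariant, so the difference $\delta$ of the two compositions is a morphism in $\mathcal{D}^+(\underline{\rm Mod}_{G_{\mathbb{F}}})$ whose restriction to $G_{\mathbb{F}'}$ vanishes. The restriction-corestriction identity then gives $[G]\cdot\delta=0$; since $\delta$ lives in an $\ell^n$-torsion abelian group and $[G]$ is a $p$-power coprime to $\ell$, we conclude $\delta=0$, completing the proof.
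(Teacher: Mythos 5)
Your proposal is correct, and while it follows the paper's overall architecture (prove the commutativity of \eqref{eq: specialized homotopy rational point pro l} under a rationality hypothesis on $Y$ via the splitting $t_j$ of \ref{para: nice section}, then descend from an unramified extension $k^\prime/k$), the descent step is handled by a genuinely different mechanism. You strengthen Prop.~\ref{prop: specialized homotopy rational point pro l} so that it holds under the weaker hypothesis of \ref{para: nice section} (smooth $\mathbb{F}_i$-rational points $\infty_i$ rather than $\mathbb{F}$-rational ones): the Gysin triangle then has left term $\bigoplus_i {\rm ind}_{G_{\mathbb{F}_i}}^{G_{\mathbb{F}}}\mathbb{Z}/\ell^n(-1)[-2]$, and Frobenius reciprocity reduces the obstruction to classes in ${\rm H}^2(G_{k_i},\boldsymbol{\mu}_{\ell^n})$ of the form ${\rm cl}_{s\vert_{G_{k_i}}}\cup\hat{c}_1[\mathcal{L}_i]$, killed by the same duality-plus-algebraicity argument applied over $k_i$. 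Since this weaker hypothesis is achieved by an unramified \emph{$p$-extension} $k^\prime/k$ (end of \ref{para: nice section}), the index $[k^\prime:k]$ is prime to $\ell$ and a restriction--corestriction argument on the $\ell^n$-torsion group $[\mathbb{R}f_{\sigma,\ast}\Lambda,\mathbb{R}\eta_{{\rm fet},\ast}\Lambda]_{\underline{\rm Mod}_{G_{\mathbb{F}}}}$ finishes the proof. The paper keeps the Proposition in its special form (requiring $\mathbb{F}$-rational points, which forces $\mathbb{F}^\prime$ to contain every $\mathbb{F}_i$, so $\ell$ may divide $[k^\prime:k]$) and pays for it in the descent: it computes the restriction map on the relevant Hom-groups explicitly (a diagonal embedding followed by multiplication by $[k^\prime:k_i]$) and, to deal with possible $\ell$-divisibility of that degree, must arrange the obstruction maps $\psi_n$ compatibly in $n$ and pass to the $\ell$-adic limit, where multiplication by $[k^\prime:k_i]$ becomes injective. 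Your route trades that explicit computation and limit argument for carrying induced modules through the Proposition; every ingredient there (surjectivity of ${\rm Pic}(\mathfrak{X}\otimes_{\mathfrak{o}}\mathfrak{o}_i)\twoheadrightarrow{\rm Pic}(Y\otimes_{\mathbb{F}}\mathbb{F}_i)$, algebraicity of ${\rm cl}_{s\vert_{G_{k_i}}}$, Tate--Lichtenbaum duality) is available verbatim over $k_i$. One point to make explicit when you form $\delta$: by the uniqueness in Cor.~\ref{cor: unramified on geometric l completion and ho rat pts}, the restriction of $\bar{s}_\ell$ to $G_{\mathbb{F}^\prime}$ \emph{is} the quasi-specialized homotopy fixed point of $s\vert_{G_{k^\prime}}$, so that ${\rm res}_{G_{\mathbb{F}^\prime}}^{G_{\mathbb{F}}}(\delta)$ really is the difference computed over $k^\prime$; this is the same identification the paper relies on when it writes ${\rm res}_{G_{\mathbb{F}^\prime}}^{G_{\mathbb{F}}}(\varphi_{s,n})=\varphi_{s^\prime,n}$.
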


\begin{proof}
Consider the difference between the compositions around the lower left and the upper right vertex of \eqref{eq: specialized homotopy rational point pro l}:
\begin{equation*}
 \varphi_{s,\Lambda} := r_s^{(\ell),\ast} - ({\rm can} \circ \bar{s}_\ell^\ast)\colon
 \xymatrix{
  C^\bullet(\widehat{\rm Et}(Y/\mathbb{F})\times_{BG_{\mathbb{F}}}EG_{\mathbb{F}},\Lambda) \ar[r] &
  C^\bullet(BG_k\times_{BG_{\mathbb{F}}}EG_{\mathbb{F}},\Lambda)
 }.
\end{equation*}
Precomposing with $C^\bullet(B\Pi(\widehat{\rm Et}(Y/\mathbb{F})\times_{BG_{\mathbb{F}}}EG_{\mathbb{F}}),\Lambda) \rightarrow C^\bullet(\widehat{\rm Et}(Y/\mathbb{F})\times_{BG_{\mathbb{F}}}EG_{\mathbb{F}},\Lambda)$ trivializes $\varphi_{s,\Lambda}$, since $\pi_1(\bar{s}_\ell)$ is a specialization of our original section $s$.
We have to show that $\varphi_{s,\Lambda}$ itself is trivial.
\\
Let us reformulate this using the notation in the proof of Prop.\ \ref{prop: specialized homotopy rational point pro l}:
$\varphi_{s,\Lambda}$ corresponds to a morphism $\mathbb{R}f_{\sigma,\ast}\Lambda \rightarrow \sigma^\ast\mathbb{R}\eta_\ast\Lambda \simeq \mathbb{R}\eta_{{\rm fet},\ast}\Lambda$.
Again, by the projection formula we may assume $\Lambda =\mathbb{Z}/\ell^n$.
Denote the corresponding morphism by $\varphi_{s,n}$.
By Lem.~\ref{lem:etale vs finite etale cohomology} there is an exact triangle
\begin{equation*}
 \xymatrix{
  \mathbb{R} f_{\sigma,{\rm fet},\ast}\mathbb{Z}/\ell^n \ar[r]^-{\gamma^\ast} &
  \mathbb{R} f_{\sigma,\ast}\mathbb{Z}/\ell^n \ar[r] &
  \bigoplus_{i\in \mathcal{R}}  (\mathbb{R}^2 \tilde{f}_{i,\ast}\mathbb{Z}/\ell^n)[-2] \ar[r] &
  \mathbb{R} f_{\sigma,{\rm fet},\ast}\mathbb{Z}/\ell^n[1]
 }.
\end{equation*}
in $\mathcal{D}^+(\underline{\rm Mod}_{G_{\mathbb{F}}})$.
Here $\tilde{f}_{i}$ denotes the structural map of the rational component $\tilde{Y}_i/\mathbb{F}$. 
Since the composition $\varphi_{s,n} \circ \gamma^\ast$ is trivial, $\varphi_{s,n}$ factors through $\mathbb{R} f_{\sigma,\ast}\mathbb{Z}/\ell^n \rightarrow \bigoplus_{i\in \mathcal{R}}  (\mathbb{R}^2 \tilde{f}_{i,\ast}\mathbb{Z}/\ell^n)[-2]$.
Say, $\varphi_{s,n}$ factors via a map
\begin{equation*}
 \psi_n:
 \xymatrix{
  \bigoplus_{i\in \mathcal{R}}  (\mathbb{R}^2 \tilde{f}_{i,\ast}\mathbb{Z}/\ell^n)[-2] \ar[r] &
  \mathbb{R}\eta_{{\rm fet},\ast}\mathbb{Z}/\ell^n
 }.
\end{equation*}
We may assume that the maps $\psi_n$ are compatible for various $n$:
By construction, the maps $\varphi_{s,n}$ are compatible for various $n$. 
Thus, working with $\ell$-adic sheaves instead, we even get a map $\psi: \bigoplus_{i\in \mathcal{R}}  (\mathbb{R}^2 \tilde{f}_{i,\ast}\mathbb{Z}_\ell)[-2] \rightarrow \mathbb{R}\eta_{{\rm fet},\ast}\mathbb{Z}_\ell$ inducing compatible maps $\psi_n$ modulo $\ell^n$, still factoring $\varphi_{s,n}$.
It suffices to show that all the $\psi_n$ are trivial, i.e., $\psi = 0$.
\\
Consider the component $(\mathbb{R}^2 \tilde{f}_{i,\ast}\mathbb{Z}/\ell^n)[-2]$ for $i \in \mathcal{R}$.
Since finite fields have trivial Brauer groups, $\tilde{Y}_i$ is isomorphic to $\mathbb{P}_{\mathbb{F}_i}^1$ for a finite extension $\mathbb{F}_i/\mathbb{F}$.
In particular, we get
\begin{equation*}
 {\rm res}_{G_{\mathbb{F}_i}}^{G_{\mathbb{F}}}\mathbb{R}^2 \tilde{f}_{i,\ast}\mathbb{Z}/\ell^n \cong
 \bigoplus_{G_{\mathbb{F}}/G_{\mathbb{F}_i}} \mathbb{Z}/\ell^n(-1) \cong
 {\rm res}_{G_{\mathbb{F}_i}}^{G_{\mathbb{F}}} {\rm ind}_{G_{\mathbb{F}_i}}^{G_{\mathbb{F}}} \mathbb{Z}/\ell^n(-1)
\end{equation*}
for the restriction.
Thus, the counit ${\rm res}_{G_{\mathbb{F}_i}}^{G_{\mathbb{F}}} {\rm ind}_{G_{\mathbb{F}_i}}^{G_{\mathbb{F}}} \mathbb{Z}/\ell^n(-1) \rightarrow \mathbb{Z}/\ell^n(-1)$ induces an isomorphism
\begin{equation*}
 \mathbb{R}^2 \tilde{f}_{i,\ast}\mathbb{Z}/\ell^n \cong {\rm ind}_{G_{\mathbb{F}_i}}^{G_{\mathbb{F}}}  \mathbb{Z}/\ell^n(-1).
\end{equation*}
Choose a finite unramified extension $k^\prime / k$ s.t.~each rational component of the reduced special fibre $Y^\prime/\mathbb{F}^\prime$ of $\mathfrak{X}^\prime = \mathfrak{X}\otimes_{\mathfrak{o}}\mathfrak{o}^\prime/\mathfrak{o}^\prime$ contains a smooth $\mathbb{F}^\prime$-rational point.
Here $\mathfrak{o}^\prime$ denotes the ring of integers of $k^\prime$.
In particular, $\mathbb{F}^\prime/\mathbb{F}$ dominates all extensions $\mathbb{F}_i/\mathbb{F}$.
Let $f^\prime$ be the structural morphism of $\mathfrak{X}^\prime / \mathfrak{o}^\prime$.
We get $\mathbb{R}f_{\sigma^\prime,({\rm fet}),\ast}^\prime \mathbb{Z}/\ell^n = {\rm res}_{G_{\mathbb{F}^\prime}}^{G_{\mathbb{F}}} (\mathbb{R}f_{\sigma,({\rm fet}),\ast} \mathbb{Z}/\ell^n)$, $\mathbb{R}\eta_{{\rm fet},\ast}^\prime\mathbb{Z}/\ell^n = {\rm res}_{G_{\mathbb{F}^\prime}}^{G_{\mathbb{F}}} (\mathbb{R}\eta_{{\rm fet},\ast}\mathbb{Z}/\ell^n)$ ($\eta^\prime$ the generic and $\sigma^\prime$ the closed point of $\mathfrak{o}^\prime$) and the restriction of the above exact triangle to $\mathcal{D}^+(\underline{\rm Mod}_{G_{\mathbb{F}^\prime}})$ is the triangle
\begin{equation}\label{eq: restricted triangle}
 \xymatrix{
  \mathbb{R} f_{\sigma^\prime,{\rm fet},\ast}^\prime\mathbb{Z}/\ell^n \ar[r]^-{\gamma^{\prime,\ast}} &
  \mathbb{R} f_{\sigma^\prime,\ast}^\prime\mathbb{Z}/\ell^n \ar[r] &
  \bigoplus_{i\in \mathcal{R}^\prime}  \mathbb{Z}/\ell^n(-1)[-2] \ar[r] &
  \mathbb{R} f_{\sigma^\prime,{\rm fet},\ast}^\prime\mathbb{Z}/\ell^n[1]
 }
\end{equation}
given by Lem.~\ref{lem:etale vs finite etale cohomology} applied to $Y^\prime/\mathbb{F}^\prime$ directly.
Here we used $\mathbb{F}_i \subseteq \mathbb{F}^\prime$ and $\mathcal{R}^\prime$ denotes the set of rational components of $Y^\prime$, i.e., $G_{\mathbb{F}}$ acts on $\mathcal{R}^\prime$ with $\mathcal{R}^\prime/G_{\mathbb{F}} = \mathcal{R}$.
\\
We claim that it is enough to show that ${\rm res}_{G_{\mathbb{F}^\prime}}^{G_{\mathbb{F}}} \psi_n$ is trivial for all $n$.
We can check this separately for each component $(\mathbb{R}^2 \tilde{f}_{i,\ast}\mathbb{Z}/\ell^n)[-2]$, $i \in \mathcal{R}$ and after twisting by $(-)(1)$ and shifting by $(-)[2]$.
We have
\begin{align*}
  [\mathbb{R}^2 \tilde{f}_{i,\ast}\mathbb{Z}/\ell^n(1), \mathbb{R}\eta_{{\rm fet},\ast}\mathbb{Z}/\ell^n(1)[2]]_{\underline{\rm Mod}_{G_{\mathbb{F}}}} & =
  [{\rm sp}^\ast{\rm ind}_{G_{\mathbb{F}_i}}^{G_{\mathbb{F}}}  \mathbb{Z}/\ell^n, \mathbb{Z}/\ell^n(1)[2]]_{\underline{\rm Mod}_{G_{k}}}
 \\
  & =
  [{\rm ind}_{G_{k_i}}^{G_{k}}  \mathbb{Z}/\ell^n, \mathbb{Z}/\ell^n(1)[2]]_{\underline{\rm Mod}_{G_{k}}}
 \\
  & =
  [\mathbb{Z}/\ell^n, \mathbb{Z}/\ell^n(1)[2]]_{\underline{\rm Mod}_{G_{k_i}}}
 \\
  & =
  {\rm H}^2(G_{k_i},\mathbb{Z}/\ell^n(1))
\end{align*}
for ${\rm sp}\colon G_k\rightarrow G_{\mathbb{F}}$ the specialization map and $k_i/k$ the unramified extension with residue extension $\mathbb{F}_i/\mathbb{F}$.
Similarly one gets
\begin{align*}
  [{\rm res}_{G_{\mathbb{F}_i}}^{G_{\mathbb{F}}} \mathbb{R}^2 \tilde{f}_{i,\ast}\mathbb{Z}/\ell^n(1), {\rm res}_{G_{\mathbb{F}_i}}^{G_{\mathbb{F}}} \mathbb{R}\eta_{{\rm fet},\ast}\mathbb{Z}/\ell^n(1)[2]]_{\underline{\rm Mod}_{G_{\mathbb{F}_i}}} & =
 \\
  [{\rm res}_{G_{k_i}}^{G_{k}} {\rm ind}_{G_{k_i}}^{G_{k}}  \mathbb{Z}/\ell^n, \mathbb{Z}/\ell^n(1)[2]]_{\underline{\rm Mod}_{G_{k_i}}} & =
  \bigoplus_{G_{\mathbb{F}}/G_{\mathbb{F}_i}} {\rm H}^2(G_{k_i},\mathbb{Z}/\ell^n(1))
\end{align*}
where the restriction map ${\rm res}_{G_{\mathbb{F}_i}}^{G_{\mathbb{F}}}(-)$ between the left hand sides corresponds to the diagonal embedding ${\rm H}^2(G_{k_i},\mathbb{Z}/\ell^n(1)) \hookrightarrow \bigoplus_{G_{\mathbb{F}}/G_{\mathbb{F}_i}} {\rm H}^2(G_{k_i},\mathbb{Z}/\ell^n(1))$ on the right hand sides.
Finally,
\begin{align*}
  [{\rm res}_{G_{\mathbb{F}^\prime}}^{G_{\mathbb{F}}} \mathbb{R}^2 \tilde{f}_{i,\ast}\mathbb{Z}/\ell^n(1), {\rm res}_{G_{\mathbb{F}^\prime}}^{G_{\mathbb{F}}} \mathbb{R}\eta_{{\rm fet},\ast}\mathbb{Z}/\ell^n(1)[2]]_{\underline{\rm Mod}_{G_{\mathbb{F}_i}}} & =
 \\
  [{\rm res}_{G_{k^\prime}}^{G_{k}} {\rm ind}_{G_{k_i}}^{G_{k}}  \mathbb{Z}/\ell^n, \mathbb{Z}/\ell^n(1)[2]]_{\underline{\rm Mod}_{G_{k^\prime}}} & =
  \bigoplus_{G_{\mathbb{F}}/G_{\mathbb{F}_i}} {\rm H}^2(G_{k^\prime},\mathbb{Z}/\ell^n(1))
\end{align*}
holds and the restriction map ${\rm res}_{G_{\mathbb{F}^\prime}}^{G_{\mathbb{F}_i}}(-)$ between the left hand sides corresponds to the multiplication by the degree $[k^\prime:k_i]$ on the right hand sides.
For the latter, note that both Galois cohomology groups ${\rm H}^2(G_{k_i},\mathbb{Z}/\ell^n(1))$ and ${\rm H}^2(G_{k^\prime},\mathbb{Z}/\ell^n(1))$ are canonically isomorphic to $\frac{1}{\ell^n}\mathbb{Z} /\mathbb{Z}$.
Summing up, the composed restriction map ${\rm res}_{G_{\mathbb{F}^\prime}}^{G_{\mathbb{F}_i}}(-)\colon$
\begin{equation*}
 \xymatrix{
  [(\mathbb{R}^2 \tilde{f}_{i,\ast}\mathbb{Z}/\ell^n)[2], \mathbb{R}\eta_{{\rm fet},\ast}\mathbb{Z}/\ell^n]_{\underline{\rm Mod}_{G_{\mathbb{F}}}} \ar[r]&
  [{\rm res}_{G_{\mathbb{F}^\prime}}^{G_{\mathbb{F}}} (\mathbb{R}^2 \tilde{f}_{i,\ast}\mathbb{Z}/\ell^n)[2], \mathbb{R}\eta_{{\rm fet},\ast}^\prime\mathbb{Z}/\ell^n]_{\underline{\rm Mod}_{G_{\mathbb{F}_i}}}
 }
\end{equation*}
corresponds to the diagonal embedding $\frac{1}{\ell^n}\mathbb{Z} /\mathbb{Z} \hookrightarrow \bigoplus_{G_{\mathbb{F}}/G_{\mathbb{F}_i}} \frac{1}{\ell^n}\mathbb{Z} /\mathbb{Z}$ followed by multiplication by $[k^\prime:k_i]$.
Taking the limit over all $n$ we get a monomorphism, so $\psi_n$ is trivial for \emph{all} $n$ if ${\rm res}_{G_{\mathbb{F}^\prime}}^{G_{\mathbb{F}}} \psi_n$ is trivial for \emph{all} $n$. 
\\
We have to show that ${\rm res}_{G_{\mathbb{F}^\prime}}^{G_{\mathbb{F}}} \psi_n$ is trivial.
Let us analyse the restricted triangle \eqref{eq: restricted triangle}:
Using the open embedding $j\colon Y^{\prime,\circ} \hookrightarrow Y^\prime$ (cf.~\ref{para: nice section}), we get a commutative diagram
\begin{equation*}
 \xymatrix{
  \mathbb{R}f_{\sigma^\prime,{\rm fet},\ast}^\prime\mathbb{Z}/\ell^n \ar[r] \ar[d]^-\sim &
  \mathbb{R}f_{\sigma^\prime,\ast}^\prime\mathbb{Z}/\ell^n \phantom{,} \ar[d]
  \\
  \mathbb{R}f_{\sigma^\prime,{\rm fet},\ast}^{\prime,\circ}\mathbb{Z}/\ell^n \ar[r]^-\sim &
  \mathbb{R}f_{\sigma^\prime,\ast}^{\prime,\circ}\mathbb{Z}/\ell^n,
 }
\end{equation*}
i.e., a retraction of $\mathbb{R}f_{\sigma^\prime,{\rm fet},\ast}^\prime\mathbb{Z}/\ell^n \rightarrow \mathbb{R}f_{\sigma^\prime,\ast}^\prime\mathbb{Z}/\ell^n$.
Here $f_{\sigma^\prime}^{\prime,\circ}$ is the structural morphism of $Y^{\prime,\circ}/\mathbb{F}^\prime$.
It follows that $[\mathbb{R}f_{\sigma^\prime,\ast}^\prime\mathbb{Z}/\ell^n[1],-]_{\underline{\rm Mod}_{G_{\mathbb{F}^\prime}}} \rightarrow [\mathbb{R}f_{\sigma^\prime,{\rm fet},\ast}^\prime\mathbb{Z}/\ell^n[1],-]_{\underline{\rm Mod}_{G_{\mathbb{F}^\prime}}}$ is split-surjective.
In particular,
\begin{equation*}
 \xymatrix{
  \bigoplus_{i\in \mathcal{R}^\prime} [\mathbb{Z}/\ell^n(-1)[-2],\mathbb{R}\eta_{{\rm fet},\ast}^\prime\mathbb{Z}/\ell^n]_{\underline{\rm Mod}_{G_{\mathbb{F}^\prime}}} \ar@{^(->}[r] &
  [\mathbb{R}f_{\sigma^\prime,\ast}^\prime\mathbb{Z}/\ell^n,\mathbb{R}\eta_{{\rm fet},\ast}^\prime\mathbb{Z}/\ell^n]_{\underline{\rm Mod}_{G_{\mathbb{F}^\prime}}}
 }
\end{equation*}
is a monomorphism.
Let $s^\prime = s\vert_{G_{k^\prime}}$ be the restricted section of $\pi_1(X^\prime/k^\prime)$, for $X^\prime/k^\prime$ the generic fibre of $\mathfrak{X}^\prime/\mathfrak{o}^\prime$.
By Prop.~\ref{prop: specialized homotopy rational point pro l}, ${\rm res}_{G_{\mathbb{F}^\prime}}^{G_{\mathbb{F}}} (\varphi_{s,n}) = \varphi_{s^\prime,n}$ is trivial, forcing ${\rm res}_{G_{\mathbb{F}^\prime}}^{G_{\mathbb{F}}} (\psi_n)$ to be trivial and Thm.~\ref{thm: specialized homotopy rational point pro l} follows.
\end{proof}

\subsection*{A canonical lift of $\boldsymbol{{\rm cl}_s}$.}
Now that we have collected all the ingredients in a canonical way (cf.~Cor.~\ref{cor: unramified on geometric l completion and ho rat pts} and Thm.~\ref{thm: specialized homotopy rational point pro l}), we can finally execute our general recipe for a canonical lift of the cycle class ${\rm cl}_s$:

\begin{thm}\label{thm: canonical lift of the cycle class to the model}
Let $X/k$ be a geometrically connected smooth projective curve of positive genus over a $p$-adic field $k$ and $\mathfrak{X}/\mathfrak{o}$ a regular proper flat model.
Then for any $\ell \neq p$ and any section $s$ of $\pi_1(X/k)$, the induced $\ell$-adic cycle class ${\rm cl}_s$ admits a canonical lift ${\rm cl}_s^{\mathfrak{X}}$ to ${\rm H}^2(\mathfrak{X},\boldsymbol{\mu}_{\ell^n})$, compatible for various $n$.
Further, the class ${\rm cl}_s^{\mathfrak{X}}$ is natural in the pair $(\mathfrak{X}/\mathfrak{o},s)$.
\end{thm}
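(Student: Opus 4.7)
The proof is essentially an assembly of the ingredients already developed. Given a section $s$ of $\pi_1(X/k)$, Cor.~\ref{cor: unramified on geometric l completion and ho rat pts} produces the unique quasi-specialized homotopy fixed point $\bar{s}_\ell$ of $(\widehat{\rm Et}(Y/\mathbb{F})\times_{BG_{\mathbb{F}}}EG_{\mathbb{F}})_\ell^\wedge$ in $\mathcal{H}(\hat{\mathcal{S}}_{G_{\mathbb{F}}})$. Define
\[
 {\rm cl}_s^{\mathfrak{X}} := {\rm cl}_{\bar{s}_\ell} \in {\rm H}^2(\mathfrak{X},\boldsymbol{\mu}_{\ell^n})
\]
via Def.~\ref{def: cycle class of ho rat points of special fibre}. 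By Thm.~\ref{thm: specialized homotopy rational point pro l}, the induced pullback map $\bar{s}_\ell^\ast$ is admissible in the sense of \ref{para: admissible pull back map}, so Cor.~\ref{cor: lift of the cycle class} yields that ${\rm cl}_s^{\mathfrak{X}}$ restricts to ${\rm cl}_s$ along $X\hookrightarrow \mathfrak{X}$. Compatibility for various $n$ is noted in \ref{para: cycle class of a ho rat point of the special fibre first properties}: the choice of $\bar{s}_\ell$, the admissible pullback map, and the Chern class of the restricted diagonal are all compatible with the reductions $\mathbb{Z}/\ell^m \twoheadrightarrow \mathbb{Z}/\ell^n$ for $n\leq m$.

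Naturality in $(\mathfrak{X}/\mathfrak{o},s)$ is a formal consequence of the functoriality of each ingredient. Given a morphism $f\colon (\mathfrak{X}^\prime/\mathfrak{o}^\prime,s^\prime) \to (\mathfrak{X}/\mathfrak{o},s)$ of pairs, the specialization map ${\rm sp}$, the geometric pro-$\ell$ completion, and the formation of the quasi-specialized homotopy fixed point are all functorial. Indeed, pulling back $\bar{s}_\ell$ along $f$ yields a quasi-specialization of $s^\prime$, so by the uniqueness part of Cor.~\ref{cor: unramified on geometric l completion and ho rat pts} it must agree with $\bar{s}^\prime_\ell$. Since the pullback map ${\rm id}\otimes^{\mathbb{L}}\bar{s}_\ell^\ast$ from \ref{para: pullback map} is natural in the triple $(\mathfrak{X},\mathfrak{X}^\bullet,\bar{s}_\ell)$, and since the restricted diagonal $\Delta_{\mathfrak{X}}\vert_{\mathfrak{X}^\bullet\times_{\mathfrak{o}}\mathfrak{X}}$ pulls back to its analogue on $\mathfrak{X}^\prime$, we conclude $f^\ast({\rm cl}_s^{\mathfrak{X}}) = {\rm cl}_{s^\prime}^{\mathfrak{X}^\prime}$.

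The main technical obstacle — admissibility of the pullback map in full generality, without any rationality assumption on the components of the special fibre $Y$ — has already been settled in Thm.~\ref{thm: specialized homotopy rational point pro l}, which combined the rationality hypothesis of Prop.~\ref{prop: specialized homotopy rational point pro l} with an unramified base change to that case, together with Tate--Lichtenbaum duality and the $\ell$-adic algebraicity of ${\rm cl}_s$ due to Esnault--Wittenberg. With that result in hand, no additional difficulty arises in the present theorem: it is the formal combination of Cor.~\ref{cor: unramified on geometric l completion and ho rat pts}, Thm.~\ref{thm: specialized homotopy rational point pro l}, and Cor.~\ref{cor: lift of the cycle class}.
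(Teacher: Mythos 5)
Your proposal is correct and follows essentially the same route as the paper's own proof: assemble Cor.~\ref{cor: unramified on geometric l completion and ho rat pts}, Thm.~\ref{thm: specialized homotopy rational point pro l} and Cor.~\ref{cor: lift of the cycle class}, set ${\rm cl}_s^{\mathfrak{X}} := {\rm cl}_{\bar{s}_\ell}$, and note that naturality holds by construction. Your spelled-out naturality argument (uniqueness of the quasi-specialization forcing compatibility under morphisms of pairs) is a reasonable elaboration of what the paper leaves implicit.
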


\begin{proof}
By Cor.~\ref{cor: unramified on geometric l completion and ho rat pts}, each section $s$ of $\pi_1(Y/\mathbb{F})$ induces a unique quasi-specialized homotopy fixed point $\bar{s}_\ell$ of the pro-$\ell$ completion $(\widehat{\rm Et}(Y/\mathbb{F})\times_{BG_{\mathbb{F}}}EG_{\mathbb{F}})_{\ell}^\wedge$ in $\mathcal{H}(\hat{\mathcal{S}}_{/ BG_{\mathbb{F}}})$.
By Thm.~\ref{thm: specialized homotopy rational point pro l}, the induced pullback map $\bar{s}_\ell^\ast\colon C^\bullet(\widehat{\rm Et}(Y/\mathbb{F})\times_{BG_{\mathbb{F}}}EG_{\mathbb{F}},\mathbb{Z}/\ell^n) \rightarrow \mathbb{Z}/\ell^n$ is admissible, i.e., compatible with the pullback map induced by $s$ via the specialization maps.
Thus, by Cor.~\ref{cor: lift of the cycle class}, its canonical cycle class ${\rm cl}_{\bar{s}_\ell} \in {\rm H}^2(\mathfrak{X},\boldsymbol{\mu}_{\ell^n})$ lifts the cycle class ${\rm cl}_s$ and we set ${\rm cl}_s^{\mathfrak{X}} :=  {\rm cl}_{\bar{s}_\ell}$.
Finally, the functoriality of ${\rm cl}_s^{\mathfrak{X}}$ in $(\mathfrak{X}/\mathfrak{o},s)$ holds by construction.
\end{proof}

\begin{rem}\label{rem: construction of the cycle class without homotopy}
The class ${\rm cl}_s^{\mathfrak{X}}$ can be defined purely in terms of \'{e}tale cohomology and cohomology of profinite groups: 
For a sufficiently large unramified $p$-extension $k^\prime/k$ (with Galois group $G$, normalization $\mathfrak{o}^\prime/\mathfrak{o}$ and residue extension $\mathbb{F}^\prime/\mathbb{F}$), the rationality condition of \ref{para: nice section} holds for $Y\otimes_{\mathbb{F}}\mathbb{F}^\prime$.
The section $s$ restricts to a section $s^\prime$  of $\pi_1(X\otimes_kk^\prime/k^\prime)$.
Arguing as in proof of Thm.~\ref{thm: canonical lift of the cycle class to the model} using Rem.~\ref{rem: pullback map without homotopy} gives the canonical lift ${\rm cl}_{s^\prime}^{\mathfrak{X}\otimes_{\mathfrak{o}}\mathfrak{o}^\prime}$ of ${\rm cl}_{s^\prime}$ to ${\rm H}^2(\mathfrak{X} \otimes_{\mathfrak{o}}\mathfrak{o}^\prime,\boldsymbol{\mu}_{\ell^n})^G$ purely in terms of \'{e}tale cohomology and cohomology of profinite groups. 
Here the $G$-invariance holds by \ref{para: cycle class of a ho rat point of the special fibre first properties}.
The order of $G$ kills all higher cohomology groups ${\rm H}^q(G,\Lambda)$.
Since this order is a power of $p\neq \ell$, these cohomology groups vanish for $\Lambda$ a finite $\ell$-torsion $G$-module.
In particular, ${\rm H}^2(\mathfrak{X},\boldsymbol{\mu}_{\ell^n}) = {\rm H}^2(\mathfrak{X} \otimes_{\mathfrak{o}}\mathfrak{o}^\prime,\boldsymbol{\mu}_{\ell^n})^G$ and ${\rm H}^2(X,\boldsymbol{\mu}_{\ell^n}) = {\rm H}^2(X \otimes_kk^\prime,\boldsymbol{\mu}_{\ell^n})^G$ follow using the respective Hochschild-Serre spectral sequences.
Thus, ${\rm cl}_{s^\prime}^{\mathfrak{X} \otimes_{\mathfrak{o}}\mathfrak{o}^\prime}$ induces a class in ${\rm H}^2(\mathfrak{X},\boldsymbol{\mu}_{\ell^n})$.
By construction, this class lifts ${\rm cl}_s$ and coincides with the class ${\rm cl}_s^{\mathfrak{X}}$, but is defined purely in terms of \'{e}tale cohomology and cohomology of profinite groups.
The drawback of this construction is that its canonicity comes only a posteriori via Thm.~\ref{thm: canonical lift of the cycle class to the model}.
\end{rem}


\end{document}